\theoremstyle{plain}
\newtheorem{theorem}{Theorem}
\newtheorem{proposition}[theorem]{Proposition}
\newtheorem{corollary}[theorem]{Corollary}
\newtheorem{lemma}[theorem]{Lemma}
\theoremstyle{definition}
\theoremstyle{remark}
\newtheorem{remark}{Remark}
\begin{document}

\title{Coalescence of Euclidean geodesics on the Poisson-Delaunay triangulation}
\def\MLine#1{\par\hspace*{-\leftmargin}\parbox{\textwidth}{\[#1\]}}

\def\d{{\rm d}}
\def\Del{\mathsf{Del}}
\def\Env{\mathsf{Env}}
\def\E{\mathbb{E}}
\def\es{\emptyset}
\def\intt{\mathsf{int}}
\def\ellv{l^{\mathsf{v}}}
\def\ellh{l^{\mathsf{v}}}
\def\Ld{L_{\mathsf{d}}}
\def\Lu{L_{\mathsf{u}}}
\def\mc{\mathcal}
\def\ms{\mathsf}
\def\one{\mathbbmss{1}}
\def\Q{\mathbb{Q}}
\def\P{\mathbb{P}}
\def\Pu{P_{\ms{u}}}
\def\Pm{P_{\ms{m}}}
\def\Pd{P_{\ms{d}}}
\def\R{\mathbb{R}}
\def\Rng{\mathsf{Rng}}
\def\Z{\mathbb{Z}}
\def\T{\mathcal{T}}
\def\F{\mathcal{F}}
\def\eps{\varepsilon}

\begin{abstract}
Let us consider Euclidean first-passage percolation on the Poisson-Delaunay triangulation. We prove almost sure coalescence of any two semi-infinite geodesics with the same asymptotic direction. The proof is based on an adapted Burton-Keane argument and makes use of the concentration property for shortest-path lengths in the considered graphs. Moreover, by considering the specific example of the relative neighborhood graph, we illustrate that our approach extends to further well-known graphs in computational geometry. As an application, we show that the expected number of semi-infinite geodesics starting at a given vertex and leaving a disk of a certain radius grows at most sublinearly in the radius.
\end{abstract}

\keywords{coalescence, Burton-Keane argument, Delaunay triangulation, relative neighborhood graph, Poisson point process, first-passage percolation, sublinearity}
\subjclass[2010]{60D05}

\author{David Coupier}
\address{Laboratoire Paul Painlev\'e U.M.R.~CNRS 8524, Universit\'e Lille 1, France.}
\email{david.coupier@math.univ-lille1.fr}
\author{Christian Hirsch}
\thanks{D.C. was supported by the CNRS GdR 3477 GeoSto and the Labex CEMPI (ANR-11-LABX-0007-01)}
\address{Weierstrass Institute for Applied Analysis and Stochastics Berlin, Germany.}
\email{christian.hirsch@wias-berlin.de}
\thanks{C.H.~was supported by the Leibniz program
\emph{Probabilistic Methods for Mobile Ad-Hoc Networks}}

\maketitle

\section{Introduction and main results}
\label{defSec}

\subsection{Introduction}

In the seminal papers~\cite{lNewman,newmSurf} Licea and Newman have shown coalescence of a large class of geodesics with the same direction in the standard two-dimensional lattice first-passage percolation (FPP) model. To be more precise, for Lebesgue-almost every direction, any two geodesics with this direction starting from different initial points coalesce with probability $1$. Later, Howard and Newman have investigated an isotropic first-passage percolation model based on the homogeneous Poisson point process in the plane~\cite{efpp,efpp2}. Due to the property of isotropy, they could derive an analogous coalescence statement for \emph{any} fixed direction. 

As of today, the coalescence of geodesics has been extended to further isotropic models of continuum percolation. For instance, in~\cite{pimentel1} a Euclidean FPP model is considered on the Poisson-Delaunay triangulation, where the edge weights are independent and identically distributed according to an absolutely continuous distribution on $[0,\infty)$. In the present paper, we extend the coalescence result to the case where the edge weights are dependent and given by the Euclidean length $\nu_1(e)$ of the corresponding edge $e$. Due to the strong dependence between edge weights and the underlying graph structure, local modification arguments that have been successfully applied in~\cite{pimentel1} do not carry over to the setting of Euclidean edge weights. We therefore develop a novel adaptation of the Burton-Keane argument. In our approach, a central role is played by a modified FPP model, where geodesics are forbidden to backtrack behind certain vertical lines. Moreover, we show that our general approach is not restricted to the Delaunay triangulation, but can also be applied to other graphs of interest in computational geometry. As a specific example, we provide explicitly the adaptations that are needed to deal with the Poisson relative neighborhood graph.\\

	In order to state our main results precisely, we first recall the definitions of the Delaunay triangulation and the relative neighborhood graph and refer the reader to~\cite{jarom} for further properties. Let us start with a set of vertices in $\R^2$ given by a homogeneous Poisson point process $X$ whose intensity is assumed to be equal to $1$. Then, the \emph{Delaunay triangulation $\Del(X)$ on $X$}, is the geometric graph on the vertex set $X$, where an edge is drawn between two vertices $x,y\in X$ if and only if there exists a disk whose intersection with the vertex set $X$ consists precisely of the points $x$ and $y$. The \emph{relative neighborhood graph $\Rng(X)$ on $X$}, is the geometric graph on the vertex set $X$ where an edge is drawn between two vertices $x,y\in X$ if and only if there does not exist a vertex $z\in X$ such that $\max\{|x-z|,|y-z|\}<|x-y|$. In particular, almost surely, the relative neighborhood graph is a subgraph of the Delaunay triangulation. In the following, many results hold for both $\Del(X)$ and $\Rng(X)$. Hence, we write $G(X)$ as notation to represent either of these graphs. Also, letters $x,x',y,z$ will refer to elements of $X$.

Next, let us introduce a Euclidean FPP model on $G(X)$ and explain the notion of geodesics. Let $P,P'\in\R^2$ be points that are contained on the edge set of $G(X)$. Then, we denote by $\ell(P,P')=\ell_{G(X)}(P,P')$ the Euclidean length $\nu_1(\gamma)$ of the shortest Euclidean path $\gamma$ on $G(X)$ connecting $P$ and $P'$. That is,
\begin{equation}
\label{defgeoFPP}
\ell(P,P') = \inf\{ \nu_1(\gamma) :\, \gamma \text{ is a path on $G(X)$ connecting $P$ and $P'$} \} ~.
\end{equation} 
For any path $\gamma$ on $G(X)$ and $P,P'\in\gamma$, we write $\gamma[P,P']$ for the subpath of $\gamma$ starting at $P$ and ending at $P'$. When the (possibly infinite) path $\gamma$ satisfies $\ell(P,P')=\nu_1(\gamma[P,P'])$ for all $P,P'\in\gamma$, then $\gamma$ is called \emph{geodesic}.

The present paper investigates geodesics $\gamma$ on $G(X)$ that are \emph{semi-infinite} in the sense that $\gamma$ emanates from a certain starting point but consists of infinitely many vertices. Moreover, writing $S^1=\{P\in\R^2:\,|P|=1\}$ for the unit circle, we say that a semi-infinite path $\gamma$ on $G(X)$ admits an \emph{asymptotic direction} $\hat{u}\in S^1$ if and only if
$$
\lim_{\substack{|y|\to\infty\\ y\in\gamma}} \frac{y}{|y|} = \hat{u} ~.
$$

Let $\hat{u}\in S^1$ be an arbitrary direction. Based on the classical arguments developed in~\cite{efpp,lNewman}, it is proved in~\cite{modDev} that almost surely, for every point $x\in X$, there exists a unique semi-infinite geodesic starting at $x$ and with asymptotic direction $\hat{u}$. Using the terminology of~\cite{lNewman}, this geodesic is called \emph{$\hat{u}$-unigeodesic} and will be denoted by $\gamma_x$ in the following. Note that, $\gamma_x$ obviously depends on the direction $\hat{u}$, but since in our paper this direction will always be clear from the context, we adhere to the simplified notation. 


\subsection{Coalescence of geodesics}

The first main result of our paper establishes the coalescence of $\hat{u}$-unigeodesics.

\begin{theorem}
\label{delThm}
Consider Euclidean FPP on $G(X)$. Then, for any given direction $\hat{u}\in S^1$, with probability 1 any two geodesics with asymptotic direction $\hat{u}$ eventually coalesce. That is, $\gamma_x\cap \gamma_{x'}\ne\es$ for all $x,x'\in X$.
\end{theorem}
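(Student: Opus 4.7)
The strategy is a contradiction argument of Burton--Keane type, carefully adapted to the Euclidean setting. By the isotropy of the underlying Poisson point process, we may assume $\hat{u}=(1,0)$. Suppose that with positive probability there exist $x,x'\in X$ such that $\gamma_x\cap\gamma_{x'}=\es$. Coalescence defines an equivalence relation on $X$ whose classes form a forest of semi-infinite geodesics rooted at infinity in direction $\hat{u}$, and our aim is to exhibit a single class. By ergodicity of $X$ applied to this translation-invariant construction, the hypothesis yields positive density of distinct coalescence classes, in the sense that the number of classes having a representative in $[0,n]^2$ grows as $\Theta(n^2)$.

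The central difficulty is that the classical Licea--Newman local modification scheme is unavailable: the edge weights here are Euclidean lengths, hence global deterministic functions of the configuration, and one cannot modify a single weight without altering the graph structure elsewhere. To bypass this, I would follow the abstract's suggestion and introduce, for each vertical line $L=\{a\}\times\R$ and each $x\in X$ lying strictly to the right of $L$, a modified unigeodesic $\gamma^L_x$ associated with the shortest-path length on $G(X)$ minimized over paths that use only vertices with first coordinate at least $a$. The pay-off of this restriction is planarity: since $G(X)$ is itself a planar graph (both $\Del(X)$ and $\Rng(X)$ are), any two $\gamma^L$-geodesics that cross as curves in $\R^2$ must share a vertex, after which the uniqueness of the modified $\hat{u}$-unigeodesic through a vertex forces them to coincide. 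Hence $\{\gamma^L_x\}$ (for $x$ right of $L$) forms a planar, non-crossing forest oriented towards $+\infty$ in direction $e_1$.

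The concentration property for shortest-path lengths $\ell$ is then used both to transfer information between the modified and the original FPP and to constrain the geometry of geodesics. On the one hand, a sub-linear fluctuation bound for $\ell$ rules out large backward excursions of $\gamma_x$: for $a$ chosen far enough to the left of $x$, the geodesic $\gamma_x$ stays in the half-plane to the right of $L$, so that $\gamma_x=\gamma^L_x$ on bounded observation windows with high probability. Non-coalescence of $\gamma_x,\gamma_{x'}$ inside a large box $[0,n]^2$ therefore transfers to non-coalescence of $\gamma^L_x,\gamma^L_{x'}$ when $L$ is placed at abscissa of order $-n^{1+\delta}$. On the other hand, the same concentration estimate pins each modified geodesic starting in $[0,n]^2$ into a vertical strip of width $o(n)$ around its starting height as it crosses the line $\{x_1=2n\}$.

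Combining these ingredients closes the Burton--Keane loop: there are $\Theta(n^2)$ distinct modified coalescence classes intersecting $[0,n]^2$; by planarity they cross $\{x_1=2n\}$ at distinct points; the fluctuation bound confines these crossings to a segment of length $O(n)$; hence there can be at most $O(n)$ such classes, contradicting $\Theta(n^2)$. In my view, the main obstacle is to carry out the concentration step uniformly over the entire forest of modified geodesics rather than for a single one, which is where the bulk of the quantitative work has to be done and where the treatments of $\Del(X)$ and $\Rng(X)$ are expected to diverge in their finer details.
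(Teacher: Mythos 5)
Your proposal correctly identifies two of the paper's key tools — the modified FPP in which geodesics may not backtrack behind a vertical line, and the concentration of shortest-path lengths on $G(X)$ — and the broad Burton--Keane strategy (positive density of non-coalescent geodesics in a box versus a linear bound on boundary crossings). However, there is a genuine gap at the very point where the paper does all its work.

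The gap is in the sentence ``By ergodicity of $X$ applied to this translation-invariant construction, the hypothesis yields positive density of distinct coalescence classes, in the sense that the number of classes having a representative in $[0,n]^2$ grows as $\Theta(n^2)$.'' This does not follow. The event $\{N\ge2\}$ says only that the forest $\mc{F}$ has at least two connected components; for all you know the number of components is a.s.\ equal to $2$, in which case the number of classes meeting $[0,n]^2$ is bounded and the density is zero. Even $N=\infty$ a.s.\ does not give a quadratic-in-$n$ count. Establishing positive density of ``separable'' geodesics is precisely the content of the hard Proposition~\ref{part2Lem}, and it cannot be obtained by a soft ergodicity argument: you have to manufacture, with positive probability in a fixed box, a geodesic that is \emph{shielded} from distant geodesics, and then take translates. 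In the Euclidean setting the shield cannot be built by tweaking weights, which is why the paper clears out the rectangle $R_M^-$ of Poisson points and controls the resulting graph surgery via the events $A'_M$, $A''_M$ and the concentration estimates; the modified FPP (your $\gamma^L_x$) is introduced exactly so that this left-half-plane surgery does not disturb the designated geodesics to the right of $\ellv_0$, not (as your proposal suggests) as a ``transfer'' device between two FPP models far to the left of the observation window. Your outline also does not explain why the modified geodesics $\gamma^L_x$ of interest exist and are unique, nor how non-coalescence of the original $\gamma_x$ for two specific $x,x'$ on a single positive-probability event is upgraded to a density statement; these are all absorbed by the explicit shield construction, which is absent from the proposal. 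Without that step, the contradiction in your final paragraph rests on an unproved $\Theta(n^2)$ lower bound.
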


On a very general level, the proof of Theorem~\ref{delThm} is based on the Burton-Keane technique that has emerged as a powerful tool in the analysis of random planar trees~\cite{dsf2,efpp,lNewman, pimentel1}. However, the implementation of the Burton-Keane argument for Euclidean FPP on the Poisson-Delaunay triangulation $\Del(X)$ and the Poisson relative neighborhood graph $\Rng(X)$ is markedly different from the examples that have been discussed in the literature so far. Let us explain why.

The classical argument of Burton-Keane starts by assuming that there exist $\hat{u}$-unigeodesics that do not coalesce. Then, a local modification argument is used to show that each of these geodesics has a positive probability to be surrounded by a protective shield. This shield prevents that distant geodesics coalesce with it. In particular, the expected number of shielded -- and therefore non-coalescent -- geodesics in a bounded box grows as the box area. This contradicts the fact that the expected number of edges of $G(X)$ crossing the box boundary grows as the box perimeter, i.e., as the square root of the box area. 
In a given FPP model, the difficulty of carrying out this program lies in the local modification step.

For instance, in the setting of iid edge weights~\cite{lNewman,pimentel1}, a simple modification of edge weights makes it possible to generate shields that are avoided by the geodesics starting outside such obstacles. Indeed, by choosing sufficiently large weights, these geodesics are forced to circumvent the shielded area. Conversely, in Euclidean FPP models such as the ones considered in~\cite{efpp} and the present paper, the weights are determined by the locations of the vertices given by a Poisson point process. There is no additional source of randomness on which one could rely. Hence, any local modification step must modify the Poisson point process itself. The classical Euclidean FPP model considered in~\cite{efpp} is based on the complete graph on a Poisson point process where weights are given by certain powers of the Euclidean distance. In particular, the absence of a graph topology entails a powerful monotonicity property: removing Poisson points can only increase shortest-path lengths. This makes it possible to create obstacles by deleting Poisson points in a large region without modifying the geodesics.

The main challenge in analyzing Euclidean FPP on the Delaunay triangulation $\Del(X)$ and the relative neighborhood graph $\Rng(X)$ lies in the lack of a related monotonicity property. Indeed, in both models, the removal of Poisson points has two opposite effects. First, this invalidates paths passing through a deleted vertex, and then increases shortest-path lengths (as in the previous Euclidean FPP models). Second, deleting vertices also has the possibility of \emph{unblocking} certain edges which can potentially decrease shortest-path lengths when they appear. This \emph{self-healing property} makes it much harder to describe the effect of removing points. Therefore, Euclidean FPP on the Delaunay triangulation $\Del(X)$ and the relative neighborhood graph $\Rng(X)$ are markedly different from the FPP models previously considered in the literature~\cite{efpp,lNewman,pimentel1} and requires us to give to the Burton-Keane approach of~\cite{efpp,lNewman} a new twist.

The key idea for the proof of Theorem~\ref{delThm} is to consider a modified Euclidean FPP model in which geodesics are preserved under certain local modifications. More precisely, this modified FPP model forbids geodesics to backtrack behind a given vertical line. This allows us to implement local modifications to the left of that line without influencing geodesics to the right of it. Hence, we are able to construct in the modified FPP model a family of non-coalescent geodesics whose mean number grows as the box area. However, in general these new geodesics are no longer geodesics in the original FPP model.\\ 

In order to illustrate the generality of our approach, we stress that shortest-path lengths on the Delaunay triangulation may behave quite differently to shortest-path lengths on the relative neighborhood graph. Indeed, it is known that the Delaunay triangulation is a \emph{spanner}~\cite{span1,span2}. That is, when constructing the Delaunay triangulation from an arbitrary locally finite set, any two vertices can be connected by a path whose length is at most $4\sqrt{3}\pi/9$ times the Euclidean distance between these vertices. In contrast, starting from an arbitrary infinite set, the relative neighborhood even need not necessarily be connected. Nevertheless, using a Poisson point process as vertex set, Euclidean FPP both on the Delaunay triangulation and the relative neighborhood graph are well-behaved asymptotically. More precisely, as a key tool in the proof of Theorem~\ref{delThm}, we leverage recently established results on concentration of shortest-path lengths on the Poisson-Delaunay triangulation and the Poisson relative neighborhood graph~\cite{modDev}. Even if the spanner-property of the Delaunay triangulation makes it impossible to punish the passage through obstacles by arbitrarily high costs, the strong concentration of shortest-path lengths implies that creating obstacles with moderately high costs is sufficient for achieving the desired shielding effect.

\subsection{Sublinearity of the number of geodesics leaving large disks}

Let $x^{\star}$ be the closest Poisson point to the origin $o\in\R^2$. We denote by $\T_{x^{\star}}$ the collection of geodesics $\gamma_{x^{\star},x}$, for any $x\in X$, where $\gamma_{x^{\star},x}$ is the shortest Euclidean path on $G(X)$ connecting $x^{\star}$ and $x$. The continuous nature of the underlying Poisson point process $X$ ensures the a.s.~uniqueness of geodesics. Hence, $\T_{x^{\star}}$ is a.s.~a tree rooted at $x^{\star}$, called the \textit{shortest-path tree} on the graph $G(X)$ w.r.t.~the root $x^{\star}$.

From the moderate deviations result obtained in \cite[Theorem 2]{modDev} for the geodesics $\gamma_{x,x'}$ (w.r.t.~the straight line segment connecting $x$ and $x'$) and the general method developed in \cite[Proposition 2.8]{efpp2}, the following statement holds with probability $1$: for every direction $\hat{u}\in S^1$, there exists at least one semi-infinite geodesic in $\T_{x^{\star}}$ starting from $x^{\star}$ with asymptotic direction $\hat{u}$. As a consequence, the tree $\T_{x^{\star}}$ a.s.~admits an infinite number of semi-infinite geodesics (in all the directions).

As the second main result of this paper, we show that this (expected) number is asymptotically sublinear. To do it, let us denote by $\chi_{r}$ the number of semi-infinite geodesics in $\T_{x^{\star}}$ crossing the circle $S_r(o)=rS^1$. Since these geodesics may cross various times any given circle, we have to be more precise. Let us consider the graph obtained from $\T_{x^{\star}}$ after deleting any geodesic $(x^{\star},x_{2},\ldots,x_{n})$ with $n\geq 2$ (except the endpoint $x_{n}$) such that the Poisson points $x^{\star},x_{2},\ldots,x_{n-1}$ belong to the disk $B_r(o)=\{P\in\R^2:\,|P|\le1\}$ but not $x_{n}$. Then, $\chi_{r}$ counts the unbounded connected components of this graph. Thus, let us consider a direction $\hat{u}\in S^1$ and a real number $c>0$. Among the previous unbounded connected components, the ones coming from an edge $(x_{n-1},x_{n})$ whose segment $[x_{n-1};x_{n}]$ crosses the arc of the circle $S_r(o)$ centered at $r\hat{u}$ and with length $c$, are counted by $\chi_{r}(\hat{u},c)$. Using this notation, we state:

\begin{theorem}
\label{theo:sublin}
Let $\hat{u}\in S^1$ and $c>0$. Then,
\begin{equation}
\label{sublinFPP}
\lim_{r\to\infty} r^{-1} \E \chi_{r} = 0 \; \mbox{ and } \; \lim_{r\to\infty} \E \chi_{r}(\hat{u},c) = 0 ~.
\end{equation}
\end{theorem}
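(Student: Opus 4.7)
The strategy is to use the isotropy of $X$ to reduce both limits to a single statement about a fixed arc, and then to derive that statement by contradiction via coalescence (Theorem~\ref{delThm}) combined with the moderate deviations estimates of~\cite{modDev}.

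First, by isotropy of $X$ and rotation equivariance of the construction of $\T_{x^\star}$, the law of $\chi_r(\hat{u}, c)$ is independent of $\hat{u}\in S^1$. Partition $S_r(o)$ into $\lceil 2\pi r/c\rceil$ disjoint arcs of length approximately $c$; each unbounded component counted by $\chi_r$ is associated with a unique arc, namely the arc crossed by its entry edge $[x_{n-1};x_n]$. Summing over these arcs and taking expectations yields $\E\chi_r = (2\pi r/c)\,\E\chi_r(\hat{u}, c) + O(1)$, so that the two limits in~(\ref{sublinFPP}) are equivalent. It therefore suffices to show $\lim_{r\to\infty}\E\chi_r(\hat{u}, c) = 0$.

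Each unbounded component contributing to $\chi_r(\hat{u}, c)$ contains at least one semi-infinite geodesic from $x^\star$, and distinct components give rise to semi-infinite geodesics which disagree past their first-exit edge of $B_r(o)$. Assuming toward a contradiction that $\limsup_{r\to\infty}\E\chi_r(\hat{u}, c) > 0$, one finds a subsequence $r_k\to\infty$ and $\eta > 0$ such that with probability at least $\eta$ there exist two distinct semi-infinite geodesics $\gamma^{(1)}_k, \gamma^{(2)}_k$ starting at $x^\star$ whose first exits of $B_{r_k}(o)$ both lie within distance $c$ of $r_k\hat{u}$. The moderate deviations estimates of~\cite[Theorem~2]{modDev} applied to the initial segments of $\gamma^{(i)}_k$ then force the asymptotic direction $\hat{u}^{(i)}_k$ of $\gamma^{(i)}_k$ to satisfy $\hat{u}^{(i)}_k\to\hat{u}$ as $k\to\infty$. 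Extracting a subsequential limit in the topology of pointwise convergence on the Poisson configuration, one obtains two semi-infinite geodesics from $x^\star$ with common asymptotic direction $\hat{u}$, contradicting the a.s.~uniqueness of the $\hat{u}$-unigeodesic through $x^\star$ that underlies Theorem~\ref{delThm}.

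The main obstacle is to ensure that these two limiting geodesics are genuinely distinct rather than the same geodesic recovered in two different ways. Since $\gamma^{(1)}_k$ and $\gamma^{(2)}_k$ lie in distinct components of $\T_{x^\star}\setminus B_{r_k}(o)$, their branching point in the tree must be a vertex of $B_{r_k}(o)$; however, a priori this branching point could drift to infinity with $k$, causing the limiting paths to coincide. Following the strategy of~\cite[Proposition~2.8]{efpp2}, we address this by combining the moderate deviations estimates with the planarity of $G(X)$ and a Burton-Keane-type density argument to retain, along a further subsequence, a positive-probability sub-event on which the branching points remain in a bounded region of $\R^2$. On such a sub-event the subsequential limit yields two genuinely distinct semi-infinite geodesics from $x^\star$ sharing the asymptotic direction $\hat{u}$, contradicting Theorem~\ref{delThm} and thereby completing the proof.
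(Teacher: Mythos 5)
The proposal does not match the paper's proof and contains genuine gaps. The paper proceeds in four steps: (1) a local total-variation approximation of $\T_{x^\star}$ near $r\hat{u}$ by the stationary directed forest $\F_{-\hat{u}}$ of $(-\hat{u})$-unigeodesics, (2) absence of bi-infinite geodesics in $\F_{-\hat{u}}$, which is where coalescence (Theorem~\ref{delThm}) actually enters via a mass-transport/stationarity identity for crossings of parallel lines, (3) combining (1) and (2) to get convergence in probability of $\chi_r(\hat{u},2\pi)$ to $0$, and (4) a uniform second-moment bound to upgrade to $L^1$ convergence. Your proposal follows a different route, and at several points that route breaks down.

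The first serious gap is the step from $\limsup_r \E\chi_r(\hat{u},c)>0$ to ``with probability at least $\eta$ there exist \emph{two} distinct semi-infinite geodesics near $r\hat{u}$.'' One component contributing to $\chi_r(\hat{u},c)$ gives only one semi-infinite piece of the tree past $B_r(o)$; to get two geodesics you would need $\chi_r(\hat{u},c)\ge 2$ with nonvanishing probability, and this simply does not follow from $\E\chi_r(\hat{u},c)\ge\eta$. Indeed the paper remarks that $\limsup_r\chi_r(\hat{u},c)\ge 1$ holds almost surely, so $\P(\chi_r(\hat{u},c)=1)$ stays bounded away from $0$ along a random subsequence while the mean still tends to $0$; this is entirely consistent with $\chi_r(\hat{u},c)\le 1$ always. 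The second gap is the subsequential-limit step. You acknowledge that the branching vertex of the two candidate geodesics could drift to infinity, but the fix you sketch (invoking~\cite[Prop.~2.8]{efpp2} and an unspecified Burton--Keane density argument) is exactly the missing content: the scenario where the branching point escapes to infinity corresponds precisely to a bi-infinite geodesic in the limiting directed forest, and ruling that out is the paper's Step 2. Without it, the extracted ``two'' limits may be the same path and no contradiction arises. Third, the contradiction you reach is with uniqueness of the $\hat{u}$-unigeodesic from $x^\star$, which is a prior result from~\cite{modDev} and is \emph{not} what Theorem~\ref{delThm} asserts; coalescence of geodesics from \emph{different} points is the ingredient the paper needs (to kill bi-infinite geodesics in $\F_{-\hat{u}}$), and your argument never actually invokes it. Finally, even if the probability statement were established, you still need a uniform integrability or moment bound (the paper's Step 4, based on a Talagrand-type Poisson tail estimate and a control on long edges of $G(X)$) to convert convergence in probability into $\E\chi_r(\hat{u},c)\to 0$; this is absent from the proposal.
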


The first limit of (\ref{sublinFPP}) can be understood as follows. Among all the edges of $\T_{x^{\star}}$ crossing the circle $S_r(o)$, whose mean number is of order $r$, a very small number of them belong to semi-infinite geodesics.

The first limit $r^{-1}\E\chi_{r}\to 0$ immediately follows from the directional result $\E\chi_{r}(\hat{u},c)\to 0$, since by isotropy $\E \chi_{r}=r\E \chi_{r}(\hat{u},2\pi)$, for any $\hat{u}\in S^1$. To state a null limit for the expectation of $\chi_{r}(\hat{u},2\pi)$ we follow the general method developed in \cite{sublin}. This method essentially relies on two ingredients: a local approximation (in distribution) of the tree $\T_{x^{\star}}$ far away from its root by a suitable directed forest and the a.s.~absence of bi-infinite geodesics in this directed forest. Actually, this absence of bi-infinite geodesics is a consequence of the coalescence of all the semi-infinite geodesics having the same asymptotic direction, i.e., Theorem \ref{delThm}.

Observe that thanks to the translation invariance of the graph $G(X)$, Theorem \ref{theo:sublin} remains true 
whatever the Poisson point $x$ of $X$ at which the considered tree is rooted.

Finally, although the product $r^{-1}\chi_{r}$ is conjectured to converge to $0$ almost surely, we can prove following ideas of~\cite[Section 3.2]{sublin} that $\chi_{r}(\hat{u},c)$ does not tend to $0$ with probability $1$. More precisely, for any $\hat{u}\in S^1$ and $c>0$:
$$
\mbox{a.s. } \; \limsup_{r\to\infty} \chi_{r}(\hat{u},c) \geq 1 ~.
$$

\vskip0.2cm
The rest of the paper is organized as follows. In Section \ref{sect:modifiedFPP}, the modified Euclidean FPP model is defined. Existence and uniqueness of its geodesics are discussed. Section \ref{sect:outlineTH1} is devoted to the outline of the proof of Theorem \ref{delThm}: the Burton-Keane argument is applied to the modified FPP model. But the heart of the proof, i.e., Proposition \ref{part2Lem}, is proved in Section \ref{thm1Sec}. Finally, Section~\ref{SecProofSublin} provides a proof of Theorem~\ref{theo:sublin}.

\section{The adapted Burton-Keane argument}
\label{sect:adaptedBK}

In this section, $G(X)$ still denotes the Delaunay triangulation $\Del(X)$ and the relative neighborhood graph $\Rng(X)$ defined on the Poisson point process $X$. Our goal is to apply the Burton-Keane argument to a modified Euclidean FPP model defined as follows.

\subsection{The modified Euclidean FPP model}
\label{sect:modifiedFPP}

Let $r\in\R$ and let us consider a modified FPP model on the graph $G(X)$ in which the length of an oriented path is given by its Euclidean length, unless it crosses the vertical line $\ellv_r=\{r\}\times\R$ from right to left. In the latter case, its length is defined to be infinite. This is the reason why from now on, we have to consider oriented paths.

Let $\gamma=(x_{1},\ldots,x_{k})$ be an oriented path in $G(X)$ where $x_{1},\ldots,x_{k}$ are Poisson points. We set
$$
\nu_1^{(r)}(\gamma)=
\begin{cases}
\infty
&\text{if $\pi_1(x_{j})\ge r \ge \pi_1(x_{j+1})$ for some $j\in\{0,\ldots,k-1\}$,}\\
\nu_1(\gamma)&\text{otherwise,}
\end{cases}
$$
where $\pi_1:\R^2\to\R$ denotes the projection onto the first coordinate. As before, an oriented path $\gamma=(x_{j})_{1\leq j\leq k}$ with $k\in\mathbb{N}\cup\{\infty\}$ on $G(X)$ is called \emph{geodesic with respect to $\nu^{(r)}_1$} if
$$
\nu_1^{(r)}(\gamma) = \inf\{ \nu_1^{(r)}(\gamma') : \, \gamma' \text{ is an oriented path on $G(X)$ from $x_{1}$ to $x_{k}$} \} ~,
$$
where $\gamma[x_{i},x_{j}]$ denotes the (oriented) sub-path of $\gamma$ from $x_{i}$ to $x_{j}$. We also make the convention that an oriented path crossing the vertical line $\ellv_r$ from right to left cannot be a geodesic.

Let us remark that geodesics of the modified FPP model, that is, w.r.t. $\nu^{(r)}_1$, can be markedly different from the ones of the original FPP model. To see it, let us consider $x,x'\in X$ such that $\pi_1(x)<r<\pi_1(x')$ and let $\gamma$ be the geodesic connecting $x$ and $x'$ in the original FPP model. Then, the geodesic w.r.t. $\nu^{(r)}_1$ from $x$ to $x'$ is equal to $\gamma$ (up to the orientation) if and only if $\gamma$ crosses the vertical line $\ellv_r$ only one time. Besides, the geodesic w.r.t. $\nu^{(r)}_1$ from $x'$ to $x$ do not exist. Also, let us remark in the relative neighborhood graph $\Rng(X)$ it can happen that the only edge starting from $x$ crosses the vertical line $\ellv_r$ from right to left. In this case, there is no (finite or not) geodesic starting at $x$ in the modified FPP model. Such pathological situations do not occur in the Delaunay triangulation $\Del(X)$. Besides, in the modified FPP model on $\Del(X)$, it should be possible to prove existence of $\hat{u}$-unigeodesics for any asymptotic direction $\hat{u}$ such that $\langle \hat{u},e_{1} \rangle\geq 0$ and starting from any vertex $x$, using~\cite[Proposition 2.8]{efpp2}. However, this would still require some effort and will not be needed in the following.

Unlike existence, uniqueness of $\hat{u}$-unigeodesics -- when they exist -- in the modified FPP model is established fairly easily. Indeed, the classical argument due to Licea and Newman~\cite{lNewman} shows that if $\hat{u}\in S^1$ is chosen suitably, then for each vertex $x\in G^{}(X)$ there exists at most one semi-infinite geodesic w.r.t. $\nu^{(r)}_1$ with asymptotic direction $\hat{u}$ and starting point $x$. When it exists, it will be denoted $\gamma^{(r)}_x$.

To make the presentation self-contained, we reproduce from~\cite[Theorem 0]{lNewman} the original argument for the uniqueness of $\hat{u}$-unigeodesics. See also~\cite[Lemma 6]{efpp} for another account.
Let $D_r(\hat{u})$ be the event that for every $x\in X$ there exists at most one $\hat{u}$-unigeodesic w.r.t. $\nu_1^{(r)}$ starting from $x$. By stationarity, the probability of the event $D_r(\hat{u})$ does not depend on the value of $r\in\R$.

\begin{lemma}
\label{uniGeoLem}
It holds that $\int_{S^1}\P(D_0(\hat{u})^c)\d\hat{u}=0$ where $\d\hat{u}$ denotes the Lebesgue measure on $S^1$. In other words, for almost every $\hat{u}\in S^1$, $\P(D_0(\hat{u}))=1$.
\end{lemma}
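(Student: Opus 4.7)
\emph{Plan.} I would follow the classical Licea--Newman approach~\cite{lNewman} adapted to the modified FPP model: first reduce the integral identity to an almost-sure statement via Fubini, then decompose the bad set of directions according to the first-edge data of the non-unique unigeodesics, and finally bound each piece via a planarity argument. By joint measurability and Fubini,
\[
\int_{S^1} \P\bigl(D_0(\hat u)^c\bigr) \, \d\hat u \;=\; \E\bigl[\mathrm{Leb}(\mathcal B)\bigr],
\]
where $\mathcal B := \{\hat u \in S^1 : D_0(\hat u)^c \text{ holds}\}$ and $\mathrm{Leb}$ denotes Lebesgue measure on $S^1$. It therefore suffices to prove that $\mathcal B$ is almost surely a Lebesgue-null subset of $S^1$.

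If $\hat u \in \mathcal B$, then some $x \in X$ admits two distinct $\hat u$-unigeodesics w.r.t.~$\nu_1^{(0)}$. Following these two paths up to their last common vertex, I obtain a vertex $y \in X$ together with two distinct edges $\mathsf f, \mathsf g$ of $G(X)$ incident to $y$ such that both $\mathsf f$ and $\mathsf g$ extend to $\hat u$-unigeodesics from $y$. This yields
\[
\mathcal B \;\subseteq\; \bigcup_{y \in X} \bigcup_{\{\mathsf f, \mathsf g\}} B(y; \mathsf f, \mathsf g),
\]
where the inner union is over unordered pairs of distinct edges of $G(X)$ at $y$ and $B(y; \mathsf f, \mathsf g)$ is the set of directions admitting such a branching. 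Almost surely $X$ is countable and every vertex of $G(X)$ has finite degree, so this is a countable union.

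The crux is to show that each $B(y; \mathsf f, \mathsf g)$ is almost surely finite. For $\mathsf e \in \{\mathsf f, \mathsf g\}$ let $I(\mathsf e)$ denote the set of directions $\hat u$ for which some $\hat u$-unigeodesic from $y$ has first edge $\mathsf e$. A planarity argument shows that $I(\mathsf e)$ is a closed arc of $S^1$: if $\hat u_1, \hat u_2 \in I(\mathsf e)$, then for every direction $\hat u$ strictly between $\hat u_1$ and $\hat u_2$ on the appropriate arc, any $\hat u$-unigeodesic from $y$ must also start with $\mathsf e$ --- otherwise two semi-infinite geodesics in the plane would be forced to cross transversally, which by the usual planar argument makes them coincide beyond the crossing vertex, contradicting that they exit $y$ via different first edges. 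Two distinct closed arcs of $S^1$ intersect in at most two points, so $|B(y; \mathsf f, \mathsf g)| = |I(\mathsf f) \cap I(\mathsf g)| \le 2$. Combining the steps, $\mathcal B$ is almost surely countable, and in particular Lebesgue-null.

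\emph{Main obstacle.} The delicate point is the planarity/ordering argument in the modified FPP setting. The prohibition against crossing $\ellv_0$ from right to left breaks the rotational symmetry exploited in the original Licea--Newman argument, so one must verify that the ``sandwiching'' of unigeodesics still applies near the forbidden line. I would carry out the argument first for directions $\hat u$ with $\pi_1(\hat u) > 0$, where unigeodesics eventually live arbitrarily far to the right of $\ellv_0$ and the modification becomes immaterial in the asymptotic tail; the two exceptional directions with $\pi_1(\hat u) = 0$ are themselves Lebesgue-null on $S^1$ and can be absorbed into the bad set.
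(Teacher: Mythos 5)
Your proposal is correct and is essentially a variant of the Licea--Newman argument, but you organize the bookkeeping differently from the paper. The paper's proof works vertex-by-vertex: it introduces the trigonometrically highest and lowest geodesics $\gamma_+(x)$, $\gamma_-(x)$ w.r.t.\ $\nu_1^{(0)}$ and shows that whenever $D_0(\hat u)$ fails, some $\gamma_\pm(x_i)$ is trapped between the two non-coalescing $\hat u$-unigeodesics and therefore has direction $\hat u$; since each $x\in X$ contributes at most two such extremal directions, the bad set is countable and the Fubini step closes the argument. You instead decompose over unordered pairs $\{\mathsf f,\mathsf g\}$ of edges at a branching vertex $y$ and show that the direction sets $I(\mathsf f)$, $I(\mathsf g)$ are closed arcs whose intersection is at most two points; both packagings rest on the same planarity/radial-ordering fact. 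Two small remarks. First, the sentence ``two distinct closed arcs of $S^1$ intersect in at most two points'' is false in general (e.g., $[0,\pi]\cap[\tfrac\pi2,\tfrac{3\pi}2]$ is an arc); what you actually need, and what your own sandwiching argument establishes, is that $\intt I(\mathsf f)$ and $\intt I(\mathsf g)$ are disjoint, so $I(\mathsf f)\cap I(\mathsf g)\subset\partial I(\mathsf f)\cap\partial I(\mathsf g)$, which has at most two elements. You should phrase it that way. Second, the caveat about the modified model in your closing paragraph is overcautious: the planarity step only requires that two directed geodesics emanating from $y$ which meet at some $z\neq y$ must a.s.\ coincide on $[y,z]$, which follows from the a.s.\ uniqueness of $\nu_1^{(0)}$-geodesics between pairs of points by the usual continuum argument and does not rely on any rotational symmetry; no special treatment of the directions with $\pi_1(\hat u)\le 0$ is needed.
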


\begin{proof}
If $D_0(\hat{u})$ does not occur, then there exists some point $x\in X$ featuring two $\hat{u}$-unigeodesics $\gamma_1$ and $\gamma_2$ that have $x$ as their only common point. Let $x_1$, $x_2$ be the respective successors of $x$ in these geodesics. Writing $\gamma_+(x)$, $\gamma_-(x)$ for the trigonometrically highest and lowest geodesics w.r.t. $\nu^{(0)}_1$ in $G^{}(X)$ starting from $x\in X$, we conclude from the planarity of $G(X)$ that at least one of $\gamma_+(x_1)$, $\gamma_+(x_2)$, $\gamma_-(x_1)$ and $\gamma_-(x_2)$ is trapped between $\gamma_1$ and $\gamma_2$. Hence, its asymptotic direction is also given by $\hat{u}$. In other words, we have $D_0(\hat{u})^c\subset H(\hat{u})$, where $H(\hat{u})$ denotes the event that there exists $x\in X$ such that $\gamma_-(x)$ or $\gamma_+(x)$ has asymptotic direction $\hat{u}$. Since in each realization, the event $H(\hat{u})$ can occur only for two directions 
of $S^1$, we deduce that $\int_{S^1}\one\{H(\hat{u})\} \d\hat{u}=0$. Hence, by Fubini's theorem,
\begin{align*}
\int_{S^1}\P(D_0(\hat{u})^c)\d\hat{u}\le\int_{S^1}\P(H(\hat{u}))\d\hat{u}=\E\int_{S^1}\one\{H(\hat{u})\}\d\hat{u}=0,
\end{align*}
as required.
\end{proof}

In the following, we choose a direction $\hat{u}_0\in S^1$ such that $\P(D_0(\hat{u}_0))=1$ and the absolute value of the angle of $\hat{u}_0$ with the $x$-axis is at most $\delta$, where $\delta>0$ is assumed to be the inverse of a sufficiently large integer, which is fixed in the entire manuscript.

\subsection{Outline of the proof of Theorem \ref{delThm}}
\label{sect:outlineTH1}

Next, we introduce an event $F_M$ to describe the existence of a distinguished geodesic in the modified FPP model that is protected from coalescing with other distinguished geodesics. To make this precise we require additional notations. Given a semi-infinite path $\gamma$ of $G(X)$ and a point $P\in\gamma$, we denote by $\gamma[P]$ the semi-infinite subpath of $\gamma$ starting at $P$. Let $H^- =(-\infty,0]\times\R$ and $H^+=[0,\infty)\times\R$ be the negative and positive vertical half-plane, respectively. We also denote by $C_\delta(P)$ the cone with apex $P\in\R^2$, asymptotic direction $\hat{u}_0$ and opening angle $\delta$.

We can now define the event $F_M$ asserting that there exist points $x_{\ms{m}}^-\in X\cap H^-$ and $x_{\ms{m}}^+\in X\cap H^+$ with the following properties:
\begin{enumerate}
\item $[x_{\ms{m}}^-,x_{\ms{m}}^+]$ forms an edge in $G(X)$ and is contained in $B_{M/2}(o)$,
\item $\gamma_{\ms{m}}=\gamma^{(0)}_{x_{\ms{m}}^+}$ exists and is contained in the dilated cone $C_\delta(Q)\oplus B_{\delta M}(o)$, where $\{Q\}=[x_{\ms{m}}^-,x_{\ms{m}}^+]\cap\ellv_0$,
\item if $z\in4\Z\times2\Z$ is such that $z\ne o$ and $\pi_1(z)\le0$, and $x_-\in X\cap(Mz+H^-)$ as well as $x_+\in X\cap(Mz+H^+)$ are such that 
\begin{enumerate}
\item $[x^-,x^+]$ forms an edge in $G(X)$ and is contained in $B_{M/2}(Mz)$,
\item $\gamma=\gamma^{(M\pi_1(z))}_{x^+}$ exists,
\end{enumerate}
then $\gamma[P]\cap\gamma_{\ms{m}}=\es$, where $P$ denotes the last intersection point of $\gamma$ and $\ellv_0$. If such $P$ does not exist, we put $\gamma[P]=\gamma$. 
\end{enumerate}
Figure~\ref{fmFig} provides an illustration of the event $F_M$. It is worth pointing out here that the geodesics involved in $F_M$ are w.r.t. $\nu_1^{(r)}$ for different values of $r$.

\begin{figure}[!ht]
\begin{center}
\psfrag{b1}{\small{$B_{M/2}(Mz)$}}
\psfrag{b2}{\small{$B_{M/2}(o)$}}
\psfrag{x1}{\small{$x^{+}$}}
\psfrag{x2}{\small{$x_{\ms{m}}^+$}}
\psfrag{u}{\small{$\hat{u}_0$}}
\psfrag{g1}{\small{$\gamma[P]$}}
\psfrag{g2}{\small{$\gamma_{\ms{m}}$}}
\psfrag{P}{\small{$P$}}
\psfrag{o}{\small{$o$}}
\includegraphics[width=14cm,height=7cm]{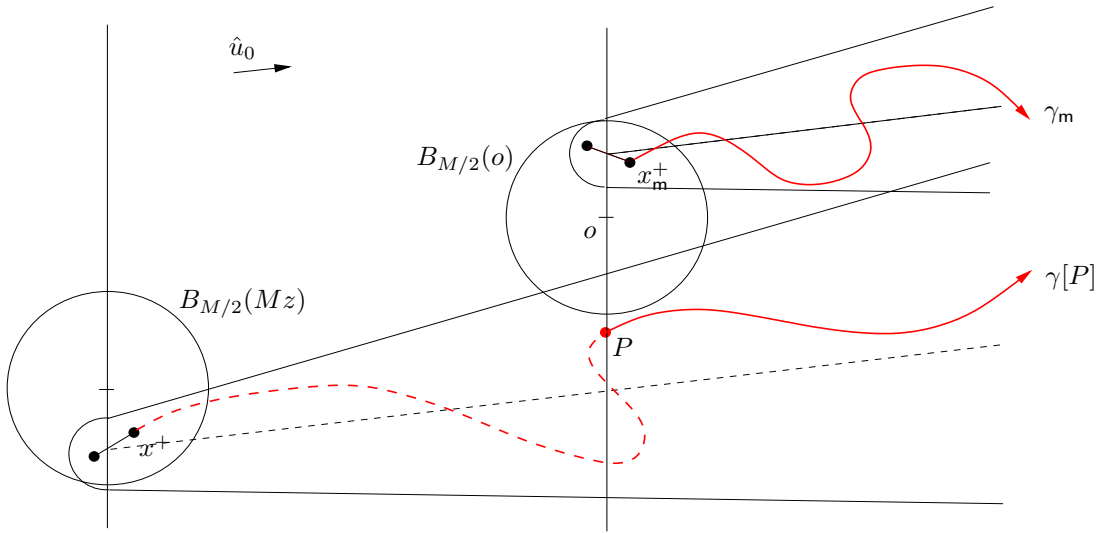}
\end{center}
\caption{\label{fmFig} This picture represents the event $F_M$. The solid red curves are the geodesics $\gamma_{\ms{m}}=\gamma^{(0)}_{x_{\ms{m}}^+}$ and $\gamma[P]$ where $\gamma=\gamma^{(M\pi_1(z))}_{x^+}$. They are both contained in the corresponding dilated cones and they do not overlap. Let us remark that for clarity, this picture does not respect the fact that $z$ belongs to $4\Z\times2\Z$.}
\end{figure}



We now consider the family of $\hat{u}_0$-unigeodesics $\{\gamma_x\}_{x\in X}$ in the original FPP model on $G(X)$. This family can be used to define a forest $\mc{F}=\mc{F}_{\hat{u}_0}$ with vertex set $X$ by drawing an edge from $x$ to $y$ if $[x,y]$ is the first edge in the geodesic $\gamma_x$. If $N$ denotes the number of connected components in this forest, then Theorem~\ref{delThm} is equivalent to the assertion that $\P(N\ge2)=0$. Of course, by isotropy, we could choose $\hat{u}_0=e_1$, but for our argument it will be notationally convenient to have a certain flexibility in the choice of the direction. We will always assume that the angle between $\hat{u}_0$ and $e_1$ is at most $\delta$.

The heart of our paper is to show that the event $F_M$ occurs with positive probability if the event $\{N\ge2\}$ does so. Its proof is devoted to Section \ref{thm1Sec}.

\begin{proposition}
\label{part2Lem}
If $\P(N\ge2)>0$, then $\P(F_M)>0$ for $M$ large enough.
\end{proposition}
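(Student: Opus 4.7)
The plan is to construct the event $F_M$ by combining three ingredients: (a) the hypothesis $\P(N\ge2)>0$, which supplies two non-coalescent unigeodesics in the original FPP; (b) the concentration estimates from \cite{modDev}, which confine $\hat u_0$-unigeodesics to thin dilated cones and thereby allow us to transfer them to the modified FPP model; and (c) the no-backtracking rule of the modified model, which lets us modify the Poisson configuration locally in the left half-plane $H^-$ without affecting geodesics in $H^+$.

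First I would upgrade $\P(N\ge2)>0$ to $N=\infty$ almost surely, by ergodicity of $X$ under translations and the fact that $\mathcal{F}$ is a translation-covariant functional. Combined with stationarity, this implies that with positive probability there is a Poisson vertex $x_{\ms{m}}^+\in H^+\cap B_{M/2}(o)$ whose $\hat u_0$-unigeodesic in the original FPP does not coalesce with unigeodesics from any fixed finite collection of other locations. Second, using the concentration bounds of \cite{modDev}, I would show that on an event of probability at least $1-\exp(-cM^{\alpha})$ the unigeodesic $\gamma_{x_{\ms{m}}^+}$ is contained in $C_\delta(x_{\ms{m}}^+)\oplus B_{\delta M}(o)$. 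In particular, once it has crossed $\ellv_0$ it never re-enters $H^-$, so that $\gamma_{x_{\ms{m}}^+}$ is also feasible for the modified model and $\gamma_{x_{\ms{m}}^+}=\gamma^{(0)}_{x_{\ms{m}}^+}$. A bounded local modification of $X\cap(B_{M/2}(o)\cap H^-)$ then produces a neighbor $x_{\ms{m}}^-$ of $x_{\ms{m}}^+$ such that $[x_{\ms{m}}^-,x_{\ms{m}}^+]$ is an edge of $G(X)$ crossing $\ellv_0$ inside $B_{M/2}(o)$, without altering $\gamma^{(0)}_{x_{\ms{m}}^+}$; this preservation is precisely the benefit of working in the modified FPP. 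Together these steps deliver items (1) and (2) of $F_M$ with positive probability.

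The third item is the crux. For every $z\in 4\Z\times 2\Z$ with $\pi_1(z)\le0$ and $z\ne o$, and every candidate geodesic $\gamma=\gamma^{(M\pi_1(z))}_{x^+}$ originating from an edge crossing $\ellv_{M\pi_1(z)}$ inside $B_{M/2}(Mz)$, the same concentration statement places $\gamma$ inside the shifted dilated cone $C_\delta(Mz)\oplus B_{\delta M}(Mz)$ with probability at least $1-\exp(-cM^{\alpha})$. The grid spacing $4\times 2$ is calibrated so that, provided $\delta$ is small and the angle between $\hat u_0$ and $e_1$ is at most $\delta$, the portion of each such shifted cone lying in $H^+$ is disjoint from the cone $C_\delta(Q)\oplus B_{\delta M}(o)$ of $\gamma_{\ms{m}}$. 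A union bound over those finitely many $z$ whose cones could a priori reach a neighborhood of $\gamma_{\ms{m}}$ — the others contribute nothing by pure geometry — then shows that $\gamma[P]\cap\gamma_{\ms{m}}=\es$ simultaneously for all relevant $z$ with probability tending to $1$ as $M\to\infty$. Combined with the lower bound on the probability of items (1)--(2), this yields $\P(F_M)>0$ for $M$ large enough.

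The main obstacle is item (3): one has to control an infinite lattice's worth of modified-FPP geodesics simultaneously, which would be hopeless in the original FPP because geodesics from the left half-plane can a priori wander arbitrarily far east and intersect $\gamma_{\ms{m}}$. The no-backtracking rule of the modified model, combined with the stretched-exponential concentration of \cite{modDev}, replaces the classical iid-weight local-modification shield of \cite{efpp,lNewman} with a purely geometric cone-disjointness shield, and it is this replacement — rather than any monotonicity under Poisson thinning, which is absent for $\Del(X)$ and $\Rng(X)$ — that makes the Burton--Keane programme go through.
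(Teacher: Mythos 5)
Your proposal misses the central difficulty of item (3) of $F_M$, and the geometric ``cone disjointness'' argument you rely on is false. Two cones $C_\delta(\cdot)$ with the same asymptotic direction $\hat u_0$ and opening angle $\delta$ always overlap at finite distance: if their apexes are at vertical separation $V$, the overlap begins at horizontal distance of order $V/\delta$ and is unavoidable no matter how small $\delta$ is. So for every $z\in4\Z\times2\Z$ (not just ``finitely many''), the dilated cone containing $\gamma^{(M\pi_1(z))}_{x^+}$ eventually intersects the dilated cone containing $\gamma_{\ms{m}}$, and pure geometry cannot deliver $\gamma[P]\cap\gamma_{\ms{m}}=\es$. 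This is precisely why the paper cannot simply union-bound over $z$: confinement to cones is necessary but far from sufficient, because the cones converge.

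The paper's actual argument uses two ingredients you do not mention. First, from $\P(N\ge2)>0$ it extracts not one but \emph{three} disjoint $\hat u_0$-unigeodesics $\gamma_{\ms{u}},\gamma_{\ms{m}},\gamma_{\ms{d}}$ emanating from $\{0\}\times[-\delta M,\delta M]$ (event $E_M$); by planarity and uniqueness of $\hat u_0$-geodesics w.r.t.\ $\nu_1^{(0)}$, the upper and lower geodesics shield $\gamma_{\ms{m}}$ so that any candidate $\gamma[P]$ which meets $\gamma_{\ms{m}}$ must have its last crossing $P$ of $\ellv_0$ inside the segment $\{0\}\times[-\delta M,\delta M]$ (Lemma~\ref{geoProdLem}). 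Second, and crucially, the paper performs a genuine local modification: it \emph{empties} the box $R_M^-=[-4M,-M]\times[-M,M]$ of Poisson points, which is permissible because the modified cost $\nu_1^{(0)}$ makes $E_M$ insensitive to the configuration in $H^-$. The emptied box acts as a left-sided obstacle: $A_M'$ and $A_M''$ give fine control of the Delaunay/RNG structure and of shortest-path lengths around it, and Lemmas~\ref{upBound1Lem}--\ref{lowBound2bLem} then pit upper and lower bounds on $\ell^{(\cdot)}$ against each other to show that any geodesic from $Mz+H^-$ (with $\pi_1(z)<0$) reaching $\{0\}\times[-\delta M,\delta M]$ and later coalescing with $\gamma_{\ms{m}}$ would be strictly longer than allowed. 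Your proposal replaces this quantitative length comparison across an artificial obstacle with a qualitative cone-separation claim that does not hold, so the argument does not go through.
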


The final part of the proof of Theorem~\ref{delThm} is to prove that $\P(F_M)=0$. In contrast to Proposition~\ref{part2Lem}, the classical argument~\cite[Theorem 1]{lNewman} applies without substantial further complications.

\begin{proposition}
\label{part3Lem}
It holds that $\P(F_{M})=0$, for $M$ large enough.
\end{proposition}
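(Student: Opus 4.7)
The plan is to derive a contradiction from the assumption $\P(F_M)>0$ via a Burton-Keane style counting argument, following the template of~\cite[Theorem~1]{lNewman}. The idea is to produce, in a box of lattice side $\Theta(T)$, a family of roughly $T^{2}$ pairwise non-coalescent modified geodesics crossing the far-right vertical line $\ellv_{2TM}$, and to contrast this with the $O(T)$ edges of $G(X)$ that can cross such a segment on average.

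Assume $p:=\P(F_M)>0$. For each $z\in 4\Z\times 2\Z$ denote by $F_M^{(z)}$ the event obtained by translating $F_M$ so that the distinguished edge is centered near $Mz$ and the vertical line becomes $\ellv_{Mz_1}$; by stationarity, $\P(F_M^{(z)})=p$. Fix $T$ large and set
$$\mathcal{L}_T=\{z\in 4\Z\times 2\Z:\,-T\le z_1\le 0,\,|z_2|\le T/2\},\quad N_T=\#\{z\in\mathcal{L}_T:\,F_M^{(z)}\text{ occurs}\},$$
so that $|\mathcal{L}_T|=\Theta(T^{2})$ and $\E N_T=\Theta(pT^{2})$.

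On $F_M^{(z)}$, the distinguished modified geodesic $\gamma_z$ lies in the dilation by $B_{\delta M}(o)$ of a cone $C_{\delta}(Q_z)$ of apex $Q_z\in\ellv_{Mz_1}\cap B_{M/2}(Mz)$ and direction $\hat{u}_0$. Since $|\angle(\hat{u}_0,e_1)|\le\delta$ with $\delta$ small, an elementary geometric estimate gives $\gamma_z\cap\ellv_{2TM}\subset\ellv_{2TM}\cap[-TM,TM]$ for $T$ large. Let $e_z$ be any edge of $\gamma_z$ crossing $\ellv_{2TM}$ (which exists since $\pi_1\to+\infty$ along $\gamma_z$). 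For two distinct $z,z'\in\mathcal{L}_T$ with both events occurring, assume $z_1\ge z'_1$. When $z_1>z'_1$, the $4\Z$-spacing forces $z_1-z'_1\ge 4$, so $\gamma_{z'}$ starts in $\{\pi_1\le Mz'_1+M/2\}$, strictly to the left of $\ellv_{Mz_1}$, and must cross this line; let $P'$ be its last such intersection, with the convention $\gamma_{z'}[P']=\gamma_{z'}$ if no intersection exists (as occurs when $z_1=z'_1$). Condition~(3) of $F_M^{(z)}$, applied to the valid configuration at $Mz'$ supplied by $F_M^{(z')}$ (indeed $z'_1-z_1\le 0$ and $z'\ne z$), yields $\gamma_{z'}[P']\cap\gamma_z=\es$. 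Since $e_{z'}\subset\gamma_{z'}[P']$ (because $2TM>Mz_1$), the equality $e_z=e_{z'}$ would place the shared endpoints of this edge in the intersection, a contradiction. Hence $e_z\ne e_{z'}$.

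It follows that $N_T$ is bounded above by the number of edges of $G(X)$ crossing the segment $\ellv_{2TM}\cap[-TM,TM]$, whose expectation is $O(T)$ by stationarity together with the finiteness of the linear density of edges of $\Del(X)$ or $\Rng(X)$ crossing a straight line. This contradicts $\E N_T=\Theta(pT^{2})$ for $T$ large, forcing $\P(F_M)=0$. The main point is the distinctness step above: it rests on the asymmetric shielding built into condition~(3) of $F_M$ (geodesics are protected only against configurations with first coordinate $\le$ their own), which is precisely what makes the comparison between $F_M^{(z)}$ and $F_M^{(z')}$ go through for both $z_1>z'_1$ and $z_1=z'_1$.
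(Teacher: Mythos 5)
Your proposal is essentially the paper's argument: translate $F_M$ over a $\Theta(T^2)$-sized block of the lattice $4\Z\times2\Z$, use condition~(3) of $F_M$ together with the cone confinement to map each occurrence injectively to a distinct edge of $G(X)$ crossing a suitable one-dimensional set, and contrast the resulting $\Theta(T^2)$ lower bound with the $O(T)$ expected number of such edges. The only cosmetic difference is that the paper counts edges crossing the boundary of the rectangle $[-8LM,8LM]\times[-4LM,4LM]$ rather than a single vertical segment.

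One step needs tightening. You let $e_z$ be \emph{any} edge of $\gamma_z$ crossing $\ellv_{2TM}$, and then justify $e_{z'}\subset\gamma_{z'}[P']$ by ``$2TM>Mz_1$''. This is not sufficient as stated: a modified geodesic is only monotone with respect to its own reference line $\ellv_{Mz'_1}$, and within its cone it may cross $\ellv_{2TM}$ (and $\ellv_{Mz_1}$, since $Mz_1>Mz'_1$) several times, so an arbitrary crossing edge of $\ellv_{2TM}$ need not lie on $\gamma_{z'}[P']$. The fix is to take $e_z$ to be the \emph{last} edge of $\gamma_z$ crossing $\ellv_{2TM}$: since $\gamma_{z'}$ passes through $P'\in\ellv_{Mz_1}$, where $Mz_1<2TM$, and eventually stays strictly to the right of $\ellv_{2TM}$, its last crossing of $\ellv_{2TM}$ necessarily occurs on $\gamma_{z'}[P']$. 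With that choice your distinctness conclusion $e_z\ne e_{z'}$ follows from condition~(3) exactly as you intend, and the rest of the counting goes through.
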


\begin{proof}[Proof of Proposition~\ref{part3Lem}]
By Lemma~\ref{part2Lem}, we may fix $M\ge1$ such that $\P(F_M)>0$.
For $L\ge1$ let $f(L)$ denote the number of $z\in(4\Z\cap[-4L,4L])\times(2\Z\cap[-2L,2L])$ such that $(X-Mz)\in F_M$. Then, by stationarity of $G(X)$ and the choice of $M$,  the first moment of $f(L)$ grows quadratically in $L$. On the other hand, let $f'(L)$ denote the number of edges in $G(X)$ intersecting the boundary of $[-8LM,8LM]\times[-4LM,4LM]$. Then $\E f'(L)$ grows linearly in $L$. Hence, it suffices to show that if $L$ is sufficiently large, then $f(L)\le f'(L)$ holds almost surely.

In order to prove this claim, let $z,z'\in(4\Z\cap[-4L,4L])\times(2\Z\cap[-2L,2L])$ be such that $z\ne z'$, $(X-Mz)\in F_M$ and $(X-Mz')\in F_M$. Furthermore, let $\gamma_{z,\ms{m}}=\gamma^{(M\pi_1(z))}_{x^+_{z,\ms{m}}}$ and $\gamma_{z',\ms{m}}=\gamma^{(M\pi_1(z'))}_{x^+_{z',\ms{m}}}$ denote the geodesics that are guaranteed by the events $(X-Mz)\in F_M$ and $(X-Mz')\in F_M$. Then, we claim that $\gamma_{z,\ms{m}}$ and $\gamma_{z',\ms{m}}$ do not leave the rectangle $[-8LM,8LM]\times[-4LM,4LM]$ via the same edge. Indeed, without loss of generality, we may assume that $\pi_1(z)\le \pi_1(z')$. If $\pi_1(z)=\pi_1(z')$, then we know from the definition of $F_M$ that $\gamma_{z,\ms{m}}$ and $\gamma_{z',\ms{m}}$ are disjoint. Hence, we may assume that $\pi_1(z)<\pi_1(z')$. Since $X-Mz\in F_M$, the geodesic $\gamma_{z,\ms{m}}$ is contained in the cone $C_\delta(Q)\oplus B_{\delta M}(o)$, where $\{Q\}=[x^-_{z,\ms{m}},x^+_{z,\ms{m}}]\cap \ellv_{Mz}$. Hence, we conclude that the last intersection point $P$ of $\gamma_{z,\ms{m}}$ with the vertical line $\ellv_{Mz'}$ is contained in $[-8LM,8LM]\times[-4LM,4LM]$. Now, the occurrence of the event $F_M$ implies that $\gamma_{z,\ms{m}}[P]$ and $\gamma_{z',\ms{m}}$ are disjoint, so that they leave the rectangle $[-8LM,8LM]\times[-4LM,4LM]$ via different edges.
\end{proof}

\section{Proof of Proposition~\ref{part2Lem}}
\label{thm1Sec}

Since the proof of Proposition~\ref{part2Lem} is rather long, we provide the reader with a brief outlook. We are going to define three events $E_{M}$, $A'_{M}$ and $A''_{M}$ such that their intersection implies $F_M$ and occurs with positive probability, therefore proving Proposition~\ref{part2Lem}.

The event $E_{M}$ mainly ensures the existence of three disjoint geodesics $\gamma_{\ms{u}}$, $\gamma_{\ms{m}}$ and $\gamma_{\ms{d}}$, all starting from the segment $\{0\}\times[-\delta M,\delta M]$ and included in $H^{+}$. As in the classic Burton-Keane argument, the role of $\gamma_{\ms{u}}$ and $\gamma_{\ms{d}}$ is to protect $\gamma_{\ms{m}}$ from above and below, respectively. In Lemma~\ref{geodConstrLem}, it is proved that the event $\{X\in E_M\}$ occurs with high probability whenever $\P(N\ge2)>0$.

To protect $\gamma_{\ms{m}}$ from the left, we turn the $(4M\times 2M)$-box $R_{M}=[-4M,0]\times[-M,M]$ into an obstacle. To do it, we first assume that the Poisson point process $X$ does not contain any points in the $(3M\times 2M)$-box $R_M^-=[-4M,-M]\times[-M,M]$. This results in a structural change of the Delaunay triangulation inside $R_M^-$, which is illustrated in Figure~\ref{deleteFig}. To ensure that the sketch in Figure~\ref{deleteFig} is accurate, we need to make some assumptions on the configuration of $X$ outside $R_M^-$. This is the role of the event $A'_{M}$. Let us set $X^{(1)}=X\setminus R_{M}^-$. Lemma~\ref{highProbLem1} says that $\P(X^{(1)}\in A_M')$ tends to $1$ as $M\to\infty$. Moreover, under the event $\{X^{(1)}\in A'_M\}$, the events $\{X^{(1)}\in E_M\}$ and $\{X\in E_M\}$ are equivalent.

\begin{figure}[!htpb]
\centering
\begin{tikzpicture}[scale=1]
\draw (-4,-1)--(0,-1)--(0,1)--(-4,1)--(-4,-1);
\fill (0,0) circle (2pt);
\coordinate[label=-0:$o$] (B) at (-0,0);
\coordinate[label=-90:$M$] (B) at (-0.5,-1.1);
\coordinate[label=-90:$3M$] (B) at (-2.5,-1.1);
\draw[decorate,decoration={brace,mirror}] (-4,-1.1) -- (-1,-1.1);
\draw[decorate,decoration={brace,mirror}] (-1,-1.1) -- (0,-1.1);
\draw[dashed] (-1,1)--(-1,-1);
\draw[red] (-3,1)--(-3,-1);
\draw[red] (-2.1,1)--(-2.1,-1);
\draw[red] (-2.2,1)--(-2.2,-1);
\draw[red] (-2.3,1)--(-2.3,-1);
\draw[red] (-2.4,1)--(-2.4,-1);
\draw[red] (-2.5,1)--(-2.5,-1);
\draw[red] (-2.6,1)--(-2.6,-1);
\draw[red] (-2.7,1)--(-2.7,-1);
\draw[red] (-2.8,1)--(-2.8,-1);
\draw[red] (-2.9,1)--(-2.9,-1);
\draw[red] (-2,1)--(-2,-1);
\draw[red] (-3,-1)--(-4,0);
\draw[red] (-3.1,-1)--(-4,-0.10);
\draw[red] (-3.2,-1)--(-4,-0.20);
\draw[red] (-3.3,-1)--(-4,-0.30);
\draw[red] (-3.4,-1)--(-4,-0.40);
\draw[red] (-3.5,-1)--(-4,-0.50);
\draw[red] (-3.6,-1)--(-4,-0.60);
\draw[red] (-3.7,-1)--(-4,-0.70);
\draw[red] (-3.8,-1)--(-4,-0.80);
\draw[red] (-3.9,-1)--(-4,-0.90);
\draw[red] (-3.1,1)--(-4,0.10);
\draw[red] (-3.2,1)--(-4,0.20);
\draw[red] (-3.3,1)--(-4,0.30);
\draw[red] (-3.4,1)--(-4,0.40);
\draw[red] (-3.5,1)--(-4,0.50);
\draw[red] (-3.6,1)--(-4,0.60);
\draw[red] (-3.7,1)--(-4,0.70);
\draw[red] (-3.8,1)--(-4,0.80);
\draw[red] (-3.9,1)--(-4,0.90);
\draw[red] (-1.1,1)--(-1,0.90);
\draw[red] (-1.2,1)--(-1,0.80);
\draw[red] (-1.3,1)--(-1,0.70);
\draw[red] (-1.4,1)--(-1,0.60);
\draw[red] (-1.5,1)--(-1,0.50);
\draw[red] (-1.6,1)--(-1,0.40);
\draw[red] (-1.7,1)--(-1,0.30);
\draw[red] (-1.8,1)--(-1,0.20);
\draw[red] (-1.9,1)--(-1,0.10);
\draw[red] (-1.1,-1)--(-1,-0.90);
\draw[red] (-1.2,-1)--(-1,-0.80);
\draw[red] (-1.3,-1)--(-1,-0.70);
\draw[red] (-1.4,-1)--(-1,-0.60);
\draw[red] (-1.5,-1)--(-1,-0.50);
\draw[red] (-1.6,-1)--(-1,-0.40);
\draw[red] (-1.7,-1)--(-1,-0.30);
\draw[red] (-1.8,-1)--(-1,-0.20);
\draw[red] (-1.9,-1)--(-1,-0.10);
\end{tikzpicture}
\caption{Edges in the Delaunay triangulation in the event $\{X\cap R_M^-=\es\}\cap \{X^{(1)}\in A'_M\}$.}
\label{deleteFig}
\end{figure}


Third, we need a certain control over lengths of geodesics in $G(X)$. This property will be encoded in an event $A''_M$ that only depends on the configuration of $X$ in $\R^2\setminus R_M^-$. We will see in Lemma~\ref{highProbLem2} that $\P(X^{(1)}\in A_M'')$ tends to $1$ as $M\to\infty$.

Finally, we will show in Lemma~\ref{fImplLem} that $X^{(1)}\in E_{M}\cap A'_{M}\cap A''_{M}$ almost surely implies that $X^{(1)}\in F_{M}$. Hence, the following computation proves Proposition~\ref{part2Lem}:
\begin{align*}
0&<\P(X^{(1)}\in E_{M}\cap A'_{M}\cap A''_{M}) \P(X\cap R^-_{M}=\emptyset)\\
&= \P(X^{(1)}\in E_{M}\cap A'_{M}\cap A''_{M}, X\cap R^-_{M}=\emptyset)\\
&\leq \P(X^{(1)}\in F_{M}, X\cap R_M^-=\emptyset)\\
&\leq \P(X \in F_{M}).
\end{align*}


\subsection{The event $E_{M}$}

In this section, using the assumption that $\P(N\ge2)>0$, we explain how to protect the geodesic $\gamma_{\ms{m}}$ from above and below by two semi-infinite geodesics $\gamma_{\ms{u}}$ and $\gamma_{\ms{d}}$ which do not coalesce with $\gamma_{\ms{m}}$.

As the main result of this section, we can construct the desired distinguished geodesics. For $M\ge1$, we let 
$E_{M}$ denote the event that there exist 
$$P_{\ms{u}},P_{\ms{m}},P_{\ms{d}}\in G(X)\cap(\{0\}\times [-\delta M,\delta M]),$$
such that 
\begin{enumerate}
\item $\pi_2(P_{\ms{u}})>\pi_2(P_{\ms{m}})>\pi_2(P_{\ms{d}})$, and 
\item $\gamma_{\ms{u}}$, $\gamma_{\ms{m}}$ and $\gamma_{\ms{d}}$ exist and are disjoint, where we put $\gamma_{\ms{u}}=\gamma^{(0)}_{x_{\ms{u}}^+}$, $\gamma_{\ms{m}}=\gamma^{(0)}_{x_{\ms{m}}^+}$ and $\gamma_{\ms{d}}=\gamma^{(0)}_{x_{\ms{d}}^+}$.
\end{enumerate}
Now, we show that the event $E_{M}$ occurs with high probability.

\begin{lemma}
\label{geodConstrLem}
If $\P(N\ge2)>0$, then $\lim_{M\to\infty}\P(E_{M})=1$.
\end{lemma}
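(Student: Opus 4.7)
The plan is to use ergodicity of the homogeneous Poisson point process $X$ together with the hypothesis $\P(N\ge 2)>0$ to exhibit, with probability tending to $1$ as $M\to\infty$, three pairwise non-coalescing $\hat{u}_0$-unigeodesics of the original FPP model whose last crossings of $\ellv_0$ lie inside $\{0\}\times[-\delta M,\delta M]$ at distinct heights. Because the angle between $\hat{u}_0$ and $e_1$ is at most $\delta$, these unigeodesics eventually stay in $H^+$, and because the modified FPP model $\nu_1^{(0)}$ forbids right-to-left crossings of $\ellv_0$, the three tails past those last crossings will coincide with modified-FPP $\hat{u}_0$-unigeodesics emanating from the right endpoints of the corresponding edges, thereby witnessing $E_M$.

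The first step is the structural reduction $N=\infty$ almost surely: $N$ is a translation-invariant function of $X$ and $X$ is ergodic under $\R^2$-translations, so $N$ is almost surely constant; a random finite family of at least two disjoint semi-infinite paths cannot have a distribution invariant under all of $\R^2$, which rules out every finite value $\ge 2$. For each $x\in X$ whose unigeodesic $\gamma_x$ meets $\ellv_0$, denote by $P_x^\infty$ its last intersection with $\ellv_0$, and let $S\subset\ellv_0$ be the random set of all such $P_x^\infty$. Two unigeodesics sharing a common last crossing share the edge meeting $\ellv_0$ at that point and hence coalesce, so distinct elements of $S$ correspond to pairwise non-coalescing unigeodesic tails and $|S|\le N$. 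The set $S$ is locally finite --- it is contained in the edge-$\ellv_0$ intersections of $G(X)$ --- and its distribution is invariant under vertical translations of $X$.

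A careful ergodicity argument, combining $x$-translation invariance (which rules out the pathology that every component of $\mc F$ lives entirely in $H^+$ and thereby contributes nothing to $S$) with $y$-translation invariance and the dichotomy that a translation-invariant locally finite random subset of $\R$ either has positive intensity or is almost surely empty, then shows that $S$ has positive intensity $\lambda>0$. Birkhoff's ergodic theorem gives $|S\cap(\{0\}\times[-\delta M,\delta M])|\to\infty$ almost surely, so with probability tending to $1$ the set $S$ contains three distinct points $\Pu,\Pm,\Pd\in\{0\}\times[-\delta M,\delta M]$ with $\pi_2(\Pu)>\pi_2(\Pm)>\pi_2(\Pd)$. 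For each $\bullet\in\{\ms{u},\ms{m},\ms{d}\}$ I take $x^+_\bullet\in X\cap H^+$ to be the right endpoint of the edge of $G(X)$ crossing $\ellv_0$ at $P_\bullet$; the tail of the underlying original-FPP unigeodesic past $P_\bullet$ starts at $x^+_\bullet$ and lies entirely in $H^+$. Because any path with finite $\nu_1^{(0)}$-length starting in $H^+$ is confined to $H^+$, the modified-FPP distance from $x^+_\bullet$ to any subsequent vertex on this tail equals the Euclidean length of the tail between them, so the tail is itself a $\hat{u}_0$-semi-infinite modified-FPP geodesic from $x^+_\bullet$. Lemma~\ref{uniGeoLem}, applied for the choice $\hat{u}_0$ with $\P(D_0(\hat{u}_0))=1$, then forces this tail to coincide with $\gamma^{(0)}_{x^+_\bullet}$.

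Two such tails cannot share a point, since a common point would make the underlying original-FPP unigeodesics share an edge and therefore yield the same $P^\infty$, contradicting the distinctness of $\Pu,\Pm,\Pd$. This yields $E_M$ on the event constructed above, so $\P(E_M)\to 1$ as $M\to\infty$. The main obstacle is the positive-intensity statement for $S$: $N=\infty$ almost surely does not a priori prevent the contributing unigeodesics from spreading out sub-linearly in $y$, nor does it prevent almost all components of $\mc F$ from avoiding $\ellv_0$, and ruling both out requires a delicate simultaneous use of $x$- and $y$-translation invariance. The remaining steps --- identifying tails in $H^+$ with modified-FPP unigeodesics and invoking Lemma~\ref{uniGeoLem} --- are essentially bookkeeping.
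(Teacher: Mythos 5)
There is a genuine gap, and it lies in the disjointness step, not just in the positive-intensity claim you flag yourself. You argue that if the tails $\gamma_{\bullet}=\gamma^{(0)}_{x^+_{\bullet}}$ share a point, then the underlying original-FPP unigeodesics share an edge and ``therefore yield the same $P^\infty$''. This does not follow. Two unigeodesics $\gamma_{x}$ and $\gamma_{x'}$ can coalesce at a vertex $v\in H^+$ while having \emph{different} last crossings $P\ne P'$ of $\ellv_0$: after their respective last crossings both paths stay in $H^+$ and then merge at $v$, never recrossing $\ellv_0$. In that case the tails $\gamma^{(0)}_{x^+}$ and $\gamma^{(0)}_{x'^+}$ both contain $v$ and hence intersect, even though $P\ne P'$. (Indeed, the theorem you are ultimately aiming at says that generic pairs of unigeodesics \emph{do} coalesce, so this is the typical situation, not a pathology.) The same error invalidates the bound $|S|\le N$: coalescing unigeodesics in a single component of $\mc F$ can contribute several distinct elements to $S$. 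So the ergodic count $|S\cap(\{0\}\times[-\delta M,\delta M])|\to\infty$, even if established, does not produce three \emph{disjoint} modified-FPP geodesics.

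The paper avoids this by working with the set $Y''$ of last-crossing points on $\ellv_0$ equipped with a bounded-cone confinement condition, and by then invoking the hypothesis $\P(N\ge2)>0$ at precisely the point you skip: it shows that for every $t$ the probability that \emph{all} geodesics $\gamma_P$ with $P\in Y''\cap(\{0\}\times[t,\infty))$ mutually coalesce is zero (otherwise stationarity and planarity would force $\P(N\ge2)=0$). By planarity, coalescence partitions $Y''$ into intervals, and this claim together with vertical stationarity and ergodicity makes the point process of coalescence-class boundaries have positive intensity. That is what lets one find three pairwise non-coalescent $\gamma_P$'s inside $\{0\}\times[-\delta M,\delta M]$ with probability tending to $1$; non-coalescent geodesics contained in $H^+$ and emanating from the same vertical segment are then automatically disjoint by uniqueness and planarity. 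Finally, the positive intensity issue you identify as ``the main obstacle'' is handled much more cheaply in the paper: one starts from the full edge-crossing process $Y$ (positive intensity is immediate since $G(X)$ has edges crossing $\ellv_0$ at unit linear density), and the thinnings to $Y'$ and $Y''$ retain positive intensity because the defining conditions (tail in $H^+$, tail in a bounded dilation of $C_\delta(P)$) hold for each crossing point with probability bounded away from zero, by the asymptotic-direction property. No appeal to $N=\infty$ is needed for that step, and the hypothesis $\P(N\ge2)>0$ is used exactly once, for the non-coalescence claim you omitted.
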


\begin{proof}

As a first step, we consider geodesics that do not backtrack behind the vertical line $\ellv_0$. More precisely, we define the linear point process $Y$ to consist of those $P\in l^{\ms{v}}_0$ that can be represented as $P=[x^-,x^+]\cap\ellv_0$, where $x^-,x^+\in X$ are assumed to be connected by an edge in $G(X)$. Moreover, we put $\gamma_P=\gamma_{x^+}$, where we assume that $x^+$ is chosen such that $\pi_1(x^+)>0$. Then, we let $Y'\subset Y$ denote the thinning of $Y$ consisting of all $P\in Y$ for which the geodesic $\gamma_P$ is contained in the positive half-plane $H^+$. In the following, it will play an important role that if $\gamma_P$ is a geodesic in $G(X)$ with respect to $\nu_1$ and $P\in Y'$, then $\gamma_P$ is also a geodesic with respect to $\nu_1^{(0)}$, i.e., $\gamma_P=\gamma_P^{(0)}$. Identifying $\ellv_0$ with the real line, we think of $Y'$ as one-dimensional stationary and ergodic point process. Since $\gamma_P$ has asymptotic direction $\hat{u}_0$, the intensity of $Y'$ is positive when regarded as one-dimensional point process.

In the following, it will be important to exert some control over the amount of fluctuation of the distinguished geodesics. More precisely, the definition of asymptotic direction shows that for every $P=[x^-,x^+]\cap \ellv_0\in Y'$ there exists a random $k>0$ such that $|x^--x^+|\le k$ and $\gamma_P\subset C_\delta(P)\oplus B_k(o)$. 
In particular, there exists a deterministic $K_0>0$ such that the thinning $Y''$ of $Y'$ consisting of all $P=[x^-,x^+]\cap \ellv_0\in Y'$ such that $|x^--x^+|\le K_0$ and $\gamma_P$ is contained in the dilated cone $C_\delta(P)\oplus B_{K_0}(o)$ forms a stationary and ergodic point process with positive intensity. The retention condition for $Y''$ is illustrated in Figure~\ref{k0Fig}.


\begin{figure}[!htpb]
\centering
\begin{tikzpicture}[rotate=-8,scale=1.0]
\fill (0,0) circle (1pt);
\draw[dashed] (0,0)--(5,1);
\draw[dashed] (-0.1,0.5)--(4.8,2);
\draw[dotted] (-0.1,0.5)--(4.9,1.5);
\draw[dashed] (0.1,-0.5)--(4.8,0);
\draw[dashed] (-0.1,0.5) arc (109:259:0.51);
\draw[red,->] (4.31,1.4) arc (11.3:16.3:4.59);
\draw[decorate,decoration={brace,mirror}] (-0.15,0.47) -- (-0.05,-0.03);

\draw (0,0) .. controls (2,-1) and (3.6,2) .. (5,1);
\coordinate[label=-90:$P$] (u) at (0,0);
\coordinate[label=-90:$\gamma_P$] (u) at (2.1,0.2);
\coordinate[label=-180:$K_0$] (u) at (-0.1,0.2);
\coordinate[label=0:\textcolor{red}{$\delta$}] (u) at (3.8,1.55);

\end{tikzpicture}
\caption{Illustration of the retention condition for $Y''$}
\label{k0Fig}
\end{figure}
First, we claim that for every $t\in\R$ the probability that the geodesics $\gamma_P$ coalesce for all $P\in Y''\cap(\{0\}\times[t,\infty))$ is equal to $0$. Indeed, otherwise stationarity and planarity would imply that $\P(N\ge2)=0$. In particular, 
$$\lim_{M\to\infty}\P(E_M')=1,$$
where $E_{M}'$ denotes the event that there exist $P,P',P''\in Y''\cap(\{0\}\times\R)$ such that 
\begin{enumerate}
	\item $\pi_2(P)\in [\tfrac 34\delta M,\delta M]$, $\pi_2(P')\in[-\tfrac14\delta M,\tfrac14\delta M]$ and $\pi_2(P'')\in[-\delta M,-\tfrac 34\delta M]$, and
	\item the geodesics $\gamma_P$, $\gamma_{P'}$ and $\gamma_{P''}$ are pairwise non-coalescent. 
\end{enumerate}
Since $E_M'\subset E_M$, this completes the proof.
\end{proof}

The following result shows that Lemma~\ref{geodConstrLem} allows us to restrict our attention to potential coalescence of $\gamma_{\ms{m}}$ with geodesics that cross the segment $\{0\}\times[-\delta M,\delta M]$. Recall that if $\gamma$ is an arbitrary oriented path in $G(X)$ and $P\in\gamma$, then $\gamma[P]$ denotes the oriented subpath of $\gamma$ starting at $P$.

\begin{lemma}
\label{geoProdLem}
Let $z\in4\Z\times2\Z$ be such that $z\ne o$ and $\pi_1(z)\le0$. Then, almost surely under the event $E_M$, the following assertion holds. If $x_-\in X\cap(Mz+H^-)$ and $x_+\in X\cap(Mz+H^+)$ are such that 
\begin{enumerate}
\item $[x^-,x^+]$ forms an edge in $G(X)$ that is contained in $B_{M/2}(Mz)$,
\item $\gamma=\gamma^{(M\pi_1(z))}_{x^+}$ exists, and
\item $\gamma[P]\cap \gamma_{\ms{m}}\ne\es$, where $P$ denotes the last intersection point of $\gamma$ and $\ellv_0$,
\end{enumerate}
then $|\pi_2(P)|\le\delta M$.
\end{lemma}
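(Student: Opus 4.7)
The plan is to argue by contradiction using a planarity separation argument together with the uniqueness of $\hat{u}_0$-unigeodesics from Lemma~\ref{uniGeoLem}. Assume for contradiction that $\pi_2(P) > \delta M$; the case $\pi_2(P) < -\delta M$ is handled symmetrically with $\gamma_{\ms{d}}$ in place of $\gamma_{\ms{u}}$. First, I would observe that $\gamma_{\ms{u}}$, together with the segment from $P_{\ms{u}}$ to $x_{\ms{u}}^+$, forms a simple curve in $\overline{H^+}$ starting at $P_{\ms{u}}\in\ellv_0$ and going to infinity in direction $\hat{u}_0$; it therefore separates $H^+$ into two open connected components, which I label \emph{upper} and \emph{lower}. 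Since $\gamma_{\ms{m}}$ is disjoint from $\gamma_{\ms{u}}$ and starts at a point with $\pi_2(P_{\ms{m}}) < \pi_2(P_{\ms{u}})$, it lies in the closure of the lower component. On the other hand, the assumption $\pi_2(P) > \delta M \ge \pi_2(P_{\ms{u}})$ places $P$ strictly above $P_{\ms{u}}$ on $\ellv_0$, and since $P$ is the \emph{last} intersection of $\gamma$ with $\ellv_0$, one has $\gamma[P]\setminus\{P\}\subset\{\pi_1 > 0\}$, so $\gamma[P]$ enters the upper component.

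Next, I would pick any $v\in\gamma[P]\cap\gamma_{\ms{m}}$ and consider the subpath $\gamma[P,v]$, which as a planar curve goes from the upper component at $P$ to the lower component at $v$ and therefore must share a vertex $y$ with $\gamma_{\ms{u}}$, lying strictly between $P$ and $v$. The crux of the argument is then to show that $\gamma[y]=\gamma_{\ms{u}}[y]$: this would force $v\in\gamma_{\ms{u}}\cap\gamma_{\ms{m}}=\es$, contradicting the defining property of $E_M$. Both $\gamma[y]$ and $\gamma_{\ms{u}}[y]$ are semi-infinite paths from $y$ with asymptotic direction $\hat{u}_0$, and $\gamma_{\ms{u}}[y]$ is clearly a $\hat{u}_0$-unigeodesic with respect to $\nu_1^{(0)}$. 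To prove the same for $\gamma[y]$, I would use that $\gamma[y]\subset\gamma[P]\subset\overline{H^+}$ by the last-intersection property, combined with $M\pi_1(z)\le 0$: any $\nu_1^{(0)}$-finite competitor path from $y$ to a vertex on $\gamma[y]$ remains in $\{\pi_1\ge 0\}\subset\{\pi_1\ge M\pi_1(z)\}$ and is hence also $\nu_1^{(M\pi_1(z))}$-finite. Consequently the $\nu_1^{(M\pi_1(z))}$-optimality of $\gamma[y]$ transfers to $\nu_1^{(0)}$-optimality. Uniqueness under $D_0(\hat{u}_0)$, which holds almost surely by our choice of $\hat{u}_0$, then yields $\gamma[y]=\gamma_{\ms{u}}[y]$ as required.

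I expect the main obstacle to be this optimality-transfer step: one has to relate two modified FPP models with different parameters ($0$ and $M\pi_1(z)$), and the whole argument hinges on the fact that the stricter half-plane constraint $\{\pi_1\ge 0\}$ is contained in the weaker one $\{\pi_1\ge M\pi_1(z)\}$, for which the hypothesis $\pi_1(z)\le 0$ is exactly what is needed. The planarity part is more routine but still requires some care in interpreting the polygonal graph paths as curves in the plane and in properly extending $\gamma_{\ms{u}}$ by the initial half-edge so that it genuinely separates $H^+$ into two components. The symmetric case $\pi_2(P) < -\delta M$ then proceeds along identical lines with $\gamma_{\ms{d}}$ taking the role of $\gamma_{\ms{u}}$, completing the proof of the bound $|\pi_2(P)|\le\delta M$.
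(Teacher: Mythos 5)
Your proof is correct and takes essentially the same route as the paper: planarity forces $\gamma[P]$ to meet $\gamma_{\ms{u}}$ or $\gamma_{\ms{d}}$ at a vertex, and uniqueness of $\hat{u}_0$-unigeodesics with respect to $\nu_1^{(0)}$ then contradicts the disjointness guaranteed by $E_M$. The one place where you go beyond the paper's terse statement is the optimality-transfer step, where you justify (via $\gamma[y]\subset\{\pi_1>0\}$ and the containment $\{\pi_1\ge 0\}\subset\{\pi_1\ge M\pi_1(z)\}$ from $\pi_1(z)\le 0$) that $\gamma[y]$ is indeed a geodesic with respect to $\nu_1^{(0)}$ and not only with respect to $\nu_1^{(M\pi_1(z))}$; the paper asserts this without comment, so your explicit argument is a genuine and correct elaboration of the same proof.
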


\begin{proof}
In order to derive a contradiction, we assume that $|\pi_2(P)|>\delta M$. Since $\gamma_{\ms{m}}$ is enclosed by the union of $\{0\}\times[-\delta M,\delta M]$, $[x^-_{\ms{u}},x^+_{\ms{u}}]\cup\gamma_{\ms{u}}$ and $[x^-_{\ms{d}},x^+_{\ms{d}}]\cup\gamma_{\ms{d}}$, we deduce that $\gamma[P]$ has a common vertex with $\gamma_{\ms{u}}$ or $\gamma_{\ms{d}}$. We let $x$ be the last such point and assume that it lies on $\gamma_{\ms{u}}$. Then, $\gamma[x]$ and $\gamma_{\ms{u}}[x]$ are two distinct $\hat{u}_0$-geodesics in $G(X)$ with respect to $\nu_1^{(0)}$, contradicting Lemma~\ref{uniGeoLem} and the choice of $\hat{u}_0$.
\end{proof}

\begin{remark}
Note that if $\pi_1(z)=0$, then it is impossible to obtain $|\pi_2(P)|\le\delta M$. Hence, in this case the event $E_M$ already suffices to ensure that $\gamma[P]\cap\gamma_{\ms{m}}=\es$.
\end{remark}

\subsection{The event $A'_{M}$}

The second part of the shield is obtained by changing the Poisson point process in $(3M\times2M)$-box $R_M^-=[-4M,-M]\times[-M,M]$ so as to increase the cost of passing through the rectangle $R_M$. An important feature in our choice of the event $E_M$ is that it only involves geodesics in $G(X)$ with respect to $\nu_1^{(0)}$, but not $\nu_1$. Hence, if the modifications in $R_M^-$ are organized such that they do not influence the configuration of $G(X)$ in $H^+$, then the occurrence of the event $E_M$ is not influenced by this modification. We stress that the latter implication was false if we considered geodesics in $G(X)$ with respect to $\nu_1$ and not $\nu_1^{(0)}$.

First, we introduce a family of events $\{A_{M,1}'\}_{M\ge1}$ guaranteeing that changes of the Poisson point process within $R_M^-$ do not influence the Delaunay triangulation or the relative neighborhood graph outside the dilated rectangle $R_M^-\oplus Q_{8\varepsilon M}(o)$. 
Here we put $\varepsilon=\varepsilon(M)=M^{-31/32}$ and $Q_{8\varepsilon M}(o)=[-4\varepsilon M,4\varepsilon M]^2$. To be more precise, we subdivide $(R_M^-\oplus Q_{M}(o))\setminus R_M^-$ into 
$$K_M=(4\varepsilon^{-1})(3\varepsilon^{-1})-(3\varepsilon^{-1})(2\varepsilon^{-1})=6\varepsilon^{-2}$$
congruent subsquares $Q_i=Q_{\varepsilon M}(v_i)$ of side length $\varepsilon M$, where we assume that $\varepsilon^{-1}$ is an integer. Then, we write $\{X^{(1)}\in A_{M,1}'\}$ if $\#(X^{(1)}\cap Q_i)\in(0,2\varepsilon^2M^2)$ holds for all $1\le i\le K_M$. Since $\#(X^{(1)}\cap Q_i)$ is Poisson-distributed with parameter $\varepsilon^2M^2$, the probability of the event $A_{M,1}'$ tends to $1$ as $M\to\infty$.
Next, we show that under $\{X\in A_{M,1}'\}$ the Delaunay triangulation and the relative neighborhood graph exhibit stabilization outside $R_M^-\oplus Q_{8\varepsilon M}(o)$. 

\begin{lemma}
\label{stabRadLem}
If $X^{(1)}\in A_{M,1}'$, then 
$$G(X^{(1)}\cup\psi)\setminus(R_M^-\oplus Q_{8\varepsilon M}(o))=G(X^{(1)})\setminus(R_M^-\oplus Q_{8\varepsilon M}(o))$$
holds for every finite subset $\psi$ of $R_M^-$.
\end{lemma}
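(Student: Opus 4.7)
The plan is to decompose the claimed equality into two inclusions, corresponding to ``no new edges are created'' and ``no existing edges are destroyed'' when $X^{(1)}$ is enlarged to $X^{(1)}\cup\psi$. The first direction relies on a monotonicity property shared by both $\Del$ and $\Rng$: for any point sets $X\subseteq X'$, if $\{x,y\}\in G(X')$ with $x,y\in X$ then also $\{x,y\}\in G(X)$, because the emptiness criterion on the witness region---an empty disk through $x,y$ for the Delaunay triangulation, and the open lune $\{z\in\R^2:\,\max(|x-z|,|y-z|)<|x-y|\}$ for the relative neighborhood graph---becomes strictly more restrictive on the larger set. Applying this with $X=X^{(1)}$ and $X'=X^{(1)}\cup\psi$, and noting that any edge of $G(X^{(1)}\cup\psi)$ incident to a vertex $p\in\psi$ has its segment meeting $R_M^-\subset A:=R_M^-\oplus Q_{8\varepsilon M}(o)$, rules out any newly appearing edges outside $A$.

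For the ``no destroyed edges'' direction, I fix an edge $\{x,y\}\in G(X^{(1)})$ whose segment avoids $A$. It then suffices to exhibit a witness region $W$ for $\{x,y\}$ (an empty disk through $x,y$ for Delaunay, the lune for RNG) that is disjoint from $R_M^-$, since any such $W$ is also disjoint from $\psi\subset R_M^-$ and thus still certifies $\{x,y\}\in G(X^{(1)}\cup\psi)$.

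The core geometric statement to establish is: under $A_{M,1}'$, any empty convex set $W$ with $x,y\in\partial W$, $x,y\notin A$ and $W\cap R_M^-\neq\emptyset$ yields a contradiction. Such a $W$ must extend from $\{x,y\}$ across the strip of $L^{\infty}$-width $4\varepsilon M$ separating $R_M^-$ from the complement of $A$ and reach $R_M^-$, which in particular forces its diameter to exceed $4\varepsilon M$. Combining convexity with this lower bound on the ``span'' of $W$, one shows that $W$ must contain an axis-aligned square of side at least $2\varepsilon M$ positioned inside the annulus $(R_M^-\oplus Q_M(o))\setminus R_M^-$. A pigeonhole argument on the $\varepsilon M$-grid then produces a full subsquare $Q_i$ contained in $W$, and the lower bound $\#(X^{(1)}\cap Q_i)\geq 1$ from $A_{M,1}'$ delivers a point of $X^{(1)}$ inside $W$, contradicting its emptyness as a witness region. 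Applied to $W=\mathrm{Lune}(x,y)$ in the RNG case and to any empty witness disk through $\{x,y\}$ in the Delaunay case, this shows the witness is disjoint from $R_M^-$, as required.

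The main obstacle is this last geometric claim. Showing that $W$ contains an axis-aligned square of the required size located inside the annulus requires a case analysis based on which side of $R_M^-$ the endpoints $x,y$ lie on and on whether $|xy|$ is small or large compared to $\varepsilon M$. The RNG case is the cleaner one, since the lune is an explicit lens shape whose geometry can be read off directly. For the Delaunay case, one additionally uses the density of $X^{(1)}$ in the annulus provided by $A_{M,1}'$ to control the position of the disk center on the perpendicular bisector of $xy$, ruling out witness disks whose center is pushed deeply toward $R_M^-$.
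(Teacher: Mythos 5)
Your approach is essentially the paper's: the decisive step is that a witness region (empty disk for $\Del$, empty lune for $\Rng$) of an edge lying outside $A=R_M^-\oplus Q_{8\varepsilon M}(o)$ cannot reach $R_M^-$, because otherwise it would cover one of the grid squares $Q_i$ in the annulus, which contains a point of $X^{(1)}$ by $A_{M,1}'$, contradicting emptiness. Your split into a monotonicity direction (enlarging the vertex set only destroys edges in $\Del$ and $\Rng$) and a preservation direction via the witness argument is a clean reformulation of the paper's symmetric treatment with two deformations $\psi,\psi'$.

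There is one gap worth flagging. Your geometric claim assumes both $x,y\notin A$, but the lemma also has to rule out edges of $G(X^{(1)}\cup\psi)$ joining a point $p\in\psi\subset R_M^-$ to a point outside $A$: the portion of such an edge outside $A$ would be new material in $G(X^{(1)}\cup\psi)\setminus A$. Saying such an edge ``meets $A$'' does not dispose of it — it only says the edge enters $A$, not that it contributes nothing outside $A$. The witness argument covers this case with no extra work (the disk or lune through a chord from $R_M^-$ to the complement of $A$ has diameter at least $4\varepsilon M$ and crosses the shell, so the same $Q_i$-covering argument applies), and the paper's phrasing ``at least one endpoint outside'' the dilated rectangle does exactly this; restricting to $x,y\notin A$ is an unnecessary and slightly incorrect weakening. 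Finally, the covering claim (that the witness region engulfs a full grid square $Q_i$ inside the annulus) is left as a sketch — the paper is equally terse on this point — but you are right that it relies on the disk and lune being ``fat'' shapes, and would fail for a general thin convex $W$; this should be made explicit if the proof is written out.
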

\begin{proof}
We only provide the proof for the Delaunay triangulation, since the case of the relative neighborhood graph is similar but easier. In order to derive a contradiction, assume that we could find finite subsets $\psi,\psi'\subset R_M^-$ and an edge $e$ in $\Del(X^{(1)}\cup\psi)$ such that i) at least one end point of $e$ is outside $R_M^-\oplus Q_{8\varepsilon M}(o)$ and ii) $e$ is not an edge in $\Del(X^{(1)}\cup\psi')$. Then, there exists a disk $D$ containing $e$ and no points of $X^{(1)}\cup\psi$ in its interior. Note that $D$ does not intersect $R_M^-$, since otherwise it would cover one of the cubes $Q_i$, contradicting the assumption that $X^{(1)}\cap Q_i\ne\es$. Hence, $D\cap R_M^-=\es$ and therefore $e$ is also an edge in $\Del(X^{(1)}\cup\psi)$.
\end{proof}

Moreover, we show that under the event $\{A_{M,1}'\}_{M\ge1}$ the sketch provided in Figure~\ref{deleteFig} is accurate.

\begin{lemma}
\label{rmmConfLem}
If $X^{(1)}\in A_{M,1}'$, then 
\begin{enumerate}
\item there exists an edge $[x,y]$ in $\Del(X^{(1)})$ with $\max\{|\pi_1(x)+2M|,|\pi_1(y)+2M|\}\le8\sqrt{\varepsilon}M$, $\pi_2(x)\ge M$ and $\pi_2(y)\le -M$,
\item for every $\rho\in[8\varepsilon,1-8\varepsilon]$ there exists an edge $[x,y]$ in $\Del(X^{(1)})$ such that 
\begin{enumerate}
\item $\max\{|\pi_1(x)-(-1-\rho)M|,|\pi_2(y)-(1-\rho)M|\}\le8\sqrt{\varepsilon}M$, and
\item $0\le\pi_2(x)-M\le8\varepsilon M$ and $0\le\pi_1(y)+M\le8\varepsilon M$.
\end{enumerate}
\end{enumerate}
\end{lemma}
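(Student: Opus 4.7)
The plan is to use the empty-disk characterization of Delaunay edges: $[x,y]\in\Del(X^{(1)})$ if and only if some open disk with $x,y$ on its boundary has empty interior intersection with $X^{(1)}$. The crucial input from $A'_{M,1}$ is that each of the cells $Q_i$ tiling the annular region $(R_M^-\oplus Q_M(o))\setminus R_M^-$ contains at least one point of $X^{(1)}$; consequently, any empty disk cannot contain any $Q_i$ entirely.

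For part (i), I would consider $c=(-2M,0)\in R_M^-$ and the almost surely unique Delaunay triangle $T$ of $X^{(1)}$ whose interior contains $c$, together with its circumscribed disk $D$ and vertices $v_1,v_2,v_3\in X^{(1)}$. Since $c$ lies on the $x$-axis in the interior of $T$, the triangle has vertices on both sides of this axis; relabel so that $\pi_2(v_1)>0>\pi_2(v_2)$, giving the Delaunay edge $[v_1,v_2]$. The technical core is a chord-versus-sagitta estimate: if $D$ has radius $r$ and protrudes past one side of $R_M^-$ by depth $h$, then by the sagitta identity $(\text{chord})^2\approx 8r\cdot(\text{sagitta})$ the chord of $D$ at depth $\varepsilon M$ past that side has half-length $\sqrt{r^2-(d+\varepsilon M)^2}$, where $d$ denotes the distance from the center of $D$ to that side; cell avoidance forces this half-length below $\varepsilon M$, hence $h\le O(\varepsilon M)$. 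Running this estimate on all four sides of $R_M^-$ and combining with $c\in D$ pins the center of $D$ to within $O(\varepsilon M)$ of $(-2M,0)$ and gives $r\in[M-O(\varepsilon M),M+O(\varepsilon M)]$. Consequently $\partial D\setminus R_M^-$ consists of three thin arcs (top, bottom, right), each of tangential extent $O(\sqrt{\varepsilon}M)$; because the right arc intersects $\{\pi_2>0\}$ only in a horizontal slab of height $O(\sqrt{\varepsilon}M)\ll M$, the vertex $v_1$ must lie in the top arc, yielding $\pi_2(v_1)\ge M$ and $|\pi_1(v_1)+2M|\le 8\sqrt{\varepsilon}M$, and symmetrically for $v_2$.

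For part (ii), I would run the analogous argument with a different reference point $c_\rho\in R_M^-$ placed on the line $y=x+2M$ close to the top-right corner, concretely $c_\rho=(-M-\rho M/2,\, M-\rho M/2)$. A short calculation verifies that $y=x+2M$ is exactly the perpendicular bisector of the two target points $(-(1+\rho)M,M)$ and $(-M,(1-\rho)M)$, so every circle through these two targets has its center on this line and naturally contains $c_\rho$ once its radius exceeds the distance from that center to $c_\rho$. Considering the Delaunay triangle $T_\rho$ of $X^{(1)}$ containing $c_\rho$ with empty circumscribed disk $D_\rho$ and applying the chord-sagitta estimate to the two sides of $R_M^-$ that meet at the top-right corner forces the protrusions of $D_\rho$ past these sides to have depth at most $8\varepsilon M$ and chord length at most $16\sqrt{\varepsilon}M$. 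This produces a vertex $x$ in the top strip $\{M\le\pi_2\le M+8\varepsilon M\}$ within $8\sqrt{\varepsilon}M$ horizontal distance of $-(1+\rho)M$ and a vertex $y$ in the right strip $\{-M\le\pi_1\le-M+8\varepsilon M\}$ within $8\sqrt{\varepsilon}M$ vertical distance of $(1-\rho)M$, which is exactly the edge required.

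The main obstacle is the geometric bookkeeping in the chord-sagitta argument, and specifically (in part (i)) ruling out the possibility that $v_1$ sits in the right arc of $\partial D$ rather than in the top arc; this is handled by sharpening the chord estimate on the right side so that its intersection with $\{\pi_2>0\}$ has height at most $O(\sqrt{\varepsilon}M)<M$ and therefore cannot contain a vertex with $\pi_2\ge M$. The $\sqrt{\varepsilon}$ horizontal precision versus $\varepsilon$ vertical precision in the stated bounds reflects the identity $(\text{chord})^2\approx 8r\cdot(\text{sagitta})$ applied at radius $r\approx M$.
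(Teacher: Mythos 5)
Your approach is genuinely different from the paper's. The paper directly exhibits the disk $D=B_{(1+8\eps)M}\big((-2-8\eps)M,0\big)$, shows that its two caps above $y=M$ and below $y=-M$ each meet a subsquare $Q_i$ and hence contain a Poisson point with the required coordinate bounds, and then produces a Delaunay edge by shrinking $D$; for part (ii) it does the same with $D=B_{\rho M+8\eps M}\big((-1-\rho)M,(1-\rho)M\big)$. You instead take the circumdisk of the Delaunay triangle containing a reference point in $R_M^-$ and try to constrain that disk using cell avoidance. The cell-avoidance-to-protrusion bound $h\le O(\eps M)$ is sound, and is in effect the same geometric fact the paper exploits.

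However there are genuine gaps in your argument. The step ``running this estimate on all four sides of $R_M^-$ and combining with $c\in D$ pins the center of $D$ to within $O(\eps M)$ of $(-2M,0)$'' is not established by what precedes it: $c\in D$ only forces $c$ to lie within distance $r$ of the center, and cell avoidance only caps the protrusions, so a circumdisk centred near $(-3M,0)$ with radius close to $M$ is not excluded by those two facts alone. What actually pins the center is that $c$ lies in the \emph{interior of the triangle} $T$ whose three vertices all sit in the thin caps of $\partial D$ outside $R_M^-$; this requires a case analysis on which caps carry vertices to rule out the off-centre configurations, and you have not carried it out. Similarly, the conclusion ``because the right arc intersects $\{\pi_2>0\}$ only in a slab of height $O(\sqrt\eps M)\ll M$, the vertex $v_1$ must lie in the top arc'' is a non sequitur: a short arc can still carry a vertex, and the vertex you called $v_1$ (defined only by $\pi_2(v_1)>0$) could sit on the right cap with small positive second coordinate. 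Ruling this out again requires the triangle structure (if both the vertex above and the vertex below the $x$-axis sit in the right cap, or one in the top cap and one in the right cap, the convex hull is a sliver that cannot contain $(-2M,0)$). Parallel gaps occur in your treatment of part (ii). I believe the case analysis can be pushed through, so your route is salvageable, but the reasoning you give does not in fact establish the two claims it rests on, and the paper's direct construction of an explicit disk avoids the case analysis entirely.
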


\begin{proof}
Fix $P_0=((-2-8\varepsilon)M,0)$ and consider the disk $D=B_{(1+8\varepsilon)M}(P_0)$ of radius $M+8\varepsilon M$. First, by the choice of the subsquares $Q_i$, there exist $x',y'\in X^{(1)}\cap D$ with $\pi_2(x')\ge M$ and $\pi_2(y')\le-M$. Conversely, we claim that if $x\in X^{(1)}\cap D$ and $|\pi_2(x)|\ge M$, then $|\pi_1(x)+2M|\le8\sqrt{\varepsilon}M$. Indeed,
\begin{align*}
8\varepsilon M\ge|x-P_0|-M=\frac{(\pi_1(x)-(-2-8\varepsilon)M)^2+\pi_2(x)^2-M^2}{\sqrt{(\pi_1(x)-(-2-8\varepsilon)M)^2+\pi_2(x)^2}+M}\ge\frac{(\pi_1(x)-(-2-8\varepsilon)M)^2}{3M},
\end{align*}
so that
$$|\pi_1(x)+2M|\le|\pi_1(x)-(-2-8\varepsilon)M|+8\varepsilon M\le2\sqrt{6\varepsilon} M+8\varepsilon M,$$
which is smaller than $8\sqrt{\varepsilon}M$ if $M$ is sufficiently large. In particular, shrinking the disk $D$ until it contains precisely one $x\in X^{(1)}$ with $\pi_2(x)\ge M$ and one $y\in X^{(1)}$ with $\pi_2(y)\le-M$ proves the first claim.

For the second claim, we proceed similarly, but for the convenience of the reader, we provide some details. For $\rho\in[8\varepsilon,1-8\varepsilon]$, we fix $P_0=((-1-\rho)M ,(1-\rho)M)$ and consider the disk $D=B_{\rho M+8\varepsilon M}(P_0)$ of radius $\rho M+8\varepsilon M$ centered at $P_0$. Again, the choice of the subsquares $Q_i$ implies that there exist $x',y'\in X^{(1)}\cap D$ with $\pi_2(x')\ge M$ and $\pi_1(y')\ge-M$. Conversely, we claim that if $x\in X^{(1)}\cap D$ and $\pi_2(x)\ge M$, then $|\pi_1(x)+(1+\rho)M|\le4\sqrt{\varepsilon}M$. Indeed, as before, 
\begin{align*}
8\varepsilon M\ge \frac{(\pi_1(x)-(-1-\rho)M)^2+(\pi_2(x)-(1-\rho) M)^2-\rho^2M^2}{\sqrt{(\pi_1(x)-(-1-\rho)M)^2+(\pi_2(x)-(1-\rho)M)^2}+\rho M}\ge\frac{(\pi_1(x)-(-1-\rho)M)^2}{2M},
\end{align*}
so that
$$|\pi_1(x)-(-1-\rho)M|\le4\sqrt{\varepsilon}M.$$
By an analogous computation, we see that if $y\in X^{(1)}\cap D$ and $\pi_1(y)\ge-M$, then $|\pi_2(y)-(1-\rho)M|\le4\sqrt{\varepsilon}M$. Similar as in the previous case, shrinking $D$ until it contains precisely one $x\in X^{(1)}$ with $\pi_2(x)\ge M$ and one $y\in X^{(1)}$ with $\pi_1(y)\ge -M$ constructs the desired edge.
\end{proof}

\begin{remark}
\label{diagStrLem}
In particular, under the event $A_{M,1}'$, if $\gamma$ is a path in $G(X)$ that starts in $H^-\setminus R_M$, intersects the segment $\{0\}\times[-\delta M,\delta M]$ but does not intersect the area $[-(1+8\sqrt{\varepsilon})M,0]\times(\R\setminus[-M,M])$, then $\gamma$ intersects the union of segments $[(-2-8\sqrt{\varepsilon})M,(-1+8\sqrt{\varepsilon})M]\times\{\pm M\}$.
\end{remark}

Moreover, we obtain very precise control over the behavior of edges in $\Del(X^{(1)})$ intersecting the horizontal segment $[-2M,-M]\times\{M\}$.

\begin{corollary}
\label{diagDelCor}
Assume $X^{(1)}\in A_{M,1}'$ and that $e$ is an edge of $\Del(X^{(1)})$ intersecting the segment $[(-2+\delta)M,(-1-\delta)M]\times\{M\}$ at some point $P_1$ and write $\pi_1(P_1)=(-1-\rho)M$. Then, $e$ intersects the vertical segment 
$$\{-M\}\times[(1-\rho-32\sqrt{\varepsilon})M,(1-\rho+32\sqrt{\varepsilon})M],$$
and therefore the length of $e$ is bounded below by $(\sqrt{2}\rho -32\sqrt{\varepsilon})M$. 
\end{corollary}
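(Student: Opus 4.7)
The plan is to apply Lemma~\ref{rmmConfLem}(ii) at two flanking parameter values $\rho_{\pm}:=\rho\mp 16\sqrt{\varepsilon}$ and combine the two resulting ``fan edges'' with planarity of $\Del(X^{(1)})$ so as to trap $e$ inside a thin skewed strip crossing $R_M^-$. Since $\rho\in[\delta,1-\delta]$ and $\sqrt{\varepsilon}\to 0$, for $M$ sufficiently large both $\rho_{\pm}$ lie in $[8\varepsilon,1-8\varepsilon]$, and Lemma~\ref{rmmConfLem}(ii) produces Delaunay edges $e_{\pm}=[x_{\pm},y_{\pm}]$ with the stated coordinate control.

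Next I would do a short similar-triangles computation, using $\pi_{2}(x_{\pm})-M\in[0,8\varepsilon M]$ and $\pi_{1}(y_{\pm})+M\in[0,8\varepsilon M]$, to show that $e_{\pm}$ crosses the horizontal line $\R\times\{M\}$ within $9\sqrt{\varepsilon}M$ of the ideal point $((-1-\rho_{\pm})M,M)$ and crosses $\{-M\}\times\R$ within $10\sqrt{\varepsilon}M$ of $(-M,(1-\rho_{\pm})M)$. In particular, the upper crossings $A_{\pm}$ of $e_{\pm}$ with $\R\times\{M\}$ flank $P_{1}$ with a safety margin of at least $6\sqrt{\varepsilon}M$ on each side, and the right crossings $B_{\pm}$ with $\{-M\}\times\R$ lie within $26\sqrt{\varepsilon}M$ of $(-M,(1-\rho)M)$.

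The heart of the argument is then planarity. Let $S$ be the simply-connected region enclosed by $e_{+}$, $e_{-}$, the horizontal arc from $A_{+}$ to $A_{-}$, and the vertical arc from $B_{-}$ to $B_{+}$; this is essentially a skewed parallelogram sitting inside $R_M^{-}$. The edge $e$ enters $S$ at $P_{1}$ while transversally descending into $\{y<M\}$. Since distinct Delaunay edges meet at most in a common endpoint, $e$ cannot cross $e_{\pm}$ in their interiors; being a straight segment it cannot return through $\R\times\{M\}$; and its lower endpoint lies in $X^{(1)}\subset\R^{2}\setminus R_M^{-}$, hence outside $S$. Thus $e$ must exit $S$ through the arc $\{-M\}\times[B_{+},B_{-}]$, producing a crossing point with $\{-M\}\times\R$ whose height is in $[(1-\rho-26\sqrt{\varepsilon})M,(1-\rho+26\sqrt{\varepsilon})M]$. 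The degenerate possibility that $e$ shares a vertex with $e_{\pm}$ must be treated by hand: sharing an upper vertex $x_{\pm}$ would force the top crossing of $e$ to be within $O(\varepsilon)M$ of $(-1-\rho_{\pm})M$, contradicting $\pi_{1}(P_{1})=(-1-\rho)M$; sharing a lower vertex $y_{\pm}$ places the crossing of $e$ with $\{-M\}\times\R$ within $O(\sqrt{\varepsilon})M$ of $y_{\pm}$, still inside the target window of radius $32\sqrt{\varepsilon}M$ around $(1-\rho)M$.

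Writing $P_{2}$ for the crossing point constructed above and $P_{2}^{0}=(-M,(1-\rho)M)$ for the idealized one, the length bound follows from the triangle inequality
\begin{equation*}
|e|\geq|P_{1}-P_{2}|\geq|P_{1}-P_{2}^{0}|-|P_{2}-P_{2}^{0}|\geq \sqrt{2}\rho M-32\sqrt{\varepsilon}M,
\end{equation*}
since $|P_{1}-P_{2}^{0}|=\sqrt{2}\rho M$. The main obstacle is the planarity step---specifically, ruling out that $e$ escapes $R_M^{-}$ through the left or bottom side even when its lower endpoint lies far outside $R_M^{-}$, which is precisely where the fan edges $e_{\pm}$ serve as a geometric barrier---and keeping the $\sqrt{\varepsilon}$-bookkeeping tight enough that the claimed constant $32$ is preserved through all the cases.
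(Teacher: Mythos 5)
Your proposal is correct and takes essentially the same approach as the paper: both apply Lemma~\ref{rmmConfLem}(ii) at the flanking parameters $\rho\mp16\sqrt{\varepsilon}$ to produce two bracketing ``fan'' edges $[x,y]$ and $[x',y']$, and then use planarity of $\Del(X^{(1)})$ to trap $e$ between them, forcing it to exit through the vertical segment $\{-M\}\times\R$ near height $(1-\rho)M$. The only difference is that you spell out the vertex-sharing degeneracies and the triangle-inequality step for the length bound slightly more explicitly than the paper does.
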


\begin{proof}
 By Lemma~\ref{rmmConfLem} there exist $x,x',y,y'\in X$ such that $[x,y]$ and $[x',y']$ are edges in $\ms{Del}(X)$ with the following properties.
\begin{enumerate}
\item $\max\{|\pi_1(x)-(-1-\rho -16\sqrt{\varepsilon})M|,|\pi_2(y)-(1-\rho -16\sqrt{\varepsilon})M|\}\le8\sqrt{\varepsilon}M$,
\item $\max\{|\pi_1(x')-(-1-\rho +16\sqrt{\varepsilon})M|,|\pi_2(y')-(1-\rho +16\sqrt{\varepsilon})M|\}\le8\sqrt{\varepsilon}M$,
\item $\pi_2(x),\pi_2(x')\in[M,(1+8\varepsilon) M]$ and $\pi_1(y),\pi_1(y')\in[-M,(-1+8\varepsilon )M]$.
\end{enumerate}
If we let $P_2$ denote the intersection point of $[x,y]$ with the horizontal segment $[-2M,-M]\times\{M\}$, then 
$$|\pi_1(P_2)-(-1-\rho)M|\in[-26\sqrt{\varepsilon}M,-6\sqrt{\varepsilon}M].$$
Similarly, if $P_2'$ denotes the intersection point of $[x',y']$ with the horizontal segment $[-2M,-M]$, then
$$|\pi_1(P_2')-(-1-\rho)M|\in[6\sqrt{\varepsilon}M,26\sqrt{\varepsilon}M].$$
As similar relations can be obtained for the intersections $P_3$, $P_3'$ of $[x,y]$ and $[x',y']$ with the vertical segment $\{-M\}\times[0,M]$, we see that $e$ is trapped between $[x,y]$ and $[x',y']$. In particular, $e$ intersects the vertical segment $\{-M\}\times(\pi_2(P_3),\pi_2(P_3'))$. Since,
$$(1-\rho -26\sqrt{\varepsilon})M\le \pi_2(P_3)\le\pi_2(P_3')\le (1-\rho +26\sqrt{\varepsilon})M$$
this completes the proof.
\end{proof}

\begin{remark}
\label{diagDelRem}
A similar result holds if $e$ intersects the segment $[(-2-\delta)M,(-2+\delta)M]\times\{M\}$. However, for those edges there are two options. Either they intersect the vertical segment $\{-M\}\times\R$ close to the point $(-M,0)$ or they intersect the horizontal segment $\R\times\{-M\}$ close to the point $(-2M,-M)$.
\end{remark}

\begin{remark}
\label{diagRngRem}
Since the relative neighborhood graph is a subgraph of the Delaunay triangulation, Corollary~\ref{diagDelCor} and the previous remark can be used to show that if $X^{(1)}\in A_{M,1}'$, then $\Rng(X^{(1)})$ does not contain an edge intersecting the segment $[(-2-\delta)M,(-1-\delta)M]\times\{M\}$.
\end{remark}

In addition to the stabilization property, it will also be important to know that shortest path-lengths of $G(X^{(1)})$ in the annulus $(R_M^-\oplus Q_M(o))\setminus R_M^-$ are not too long. Since shortest-path lengths in the relative neighborhood graph are closely related to descending chains~\cite[Lemmas 10]{aldLin}, we first recall the notion of descending chains from~\cite{lilypond}. For $b\ge1$ we say that a sequence of distinct vertices $X_{i_1},\ldots,X_{i_n}\in X^{(1)}$ forms a \emph{$b$-bounded descending chain} if 
$$b\ge|X_{i_1}-X_{i_2}|\ge\cdots\ge|X_{i_{n-1}}-X_{i_n}|.$$
Now, we say that the event $A_{M,2}'$ occurs if every $16\varepsilon M$-bounded descending chain of $X^{(1)}$ starting in $R_M^-\oplus Q_M(o)$ consists of at most $10^4\varepsilon^2 M^2$ hops. Finally, we put $A_M'=A_{M,1}'\cap A_{M,2}'$. Using the results from~\cite{lilypond}, we show that the events $A_M'$ occur with high probability.

\begin{lemma}
\label{highProbLem1}
It holds that $\lim_{M\to\infty}\P(X^{(1)}\in A_M')=1$. 
\end{lemma}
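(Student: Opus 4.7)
The plan is to handle $A_{M,1}'$ and $A_{M,2}'$ separately and combine via the union bound $\P((A_M')^c)\le\P((A_{M,1}')^c)+\P((A_{M,2}')^c)$.

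For $A_{M,1}'$, Poisson concentration does the job. Each $\#(X^{(1)}\cap Q_i)$ is Poisson distributed with mean $\lambda_M:=\varepsilon^2 M^2=M^{1/16}\to\infty$, so the standard Chernoff-type bounds give $\P(\#(X^{(1)}\cap Q_i)=0)=e^{-\lambda_M}$ and $\P(\#(X^{(1)}\cap Q_i)\ge 2\lambda_M)\le e^{-c\lambda_M}$ for some universal $c>0$. A union bound over the $K_M=6\varepsilon^{-2}=6M^{31/16}$ subsquares then yields $\P((A_{M,1}')^c)\le 6M^{31/16}\bigl(e^{-M^{1/16}}+e^{-cM^{1/16}}\bigr)\to 0$, since the exponential in $M^{1/16}$ easily dominates the polynomial prefactor.

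For $A_{M,2}'$, the plan is to apply the Slivnyak-Mecke formula directly to bound the expected number of starting points $x\in X\cap(R_M^-\oplus Q_M(o))$ from which a $b$-bounded descending chain of length at least $n$ emanates, where $b:=16\varepsilon M$ and $n:=10^4\varepsilon^2 M^2$; since $X^{(1)}\subset X$, this upper bounds $\P((A_{M,2}')^c)$. Substituting $u_i=x_{i+1}-x_i$, the descending condition rewrites as $|u_1|\le b$ and $|u_i|\le|u_{i-1}|$ for $i\ge 2$, and a routine radial induction evaluates the nested integral controlling the positions of the $n-1$ successive points as
\[
\int\one\{|u_1|\le b,\,|u_2|\le|u_1|,\,\ldots,\,|u_{n-1}|\le|u_{n-2}|\}\,\d u_1\cdots\d u_{n-1}=\frac{\pi^{n-1}b^{2(n-1)}}{(n-1)!},
\]
which is exactly the finiteness estimate from~\cite{lilypond}. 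Combined with the fact that $R_M^-\oplus Q_M(o)$ has area $O(M^2)$ and Stirling's approximation, this yields $\P((A_{M,2}')^c)\le O(M^2)\bigl(\pi b^2 e/(n-1)\bigr)^{n-1}$.

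The main numerical point, which is also the only potential obstacle, is that the constant $10^4$ in the definition of $A_{M,2}'$ is chosen larger than $256\pi e\approx 2186$, so that the base $\pi b^2 e/(n-1)\to 256\pi e/10^4\approx 0.22<1$ is strictly less than $1$ for $M$ large. Hence the right-hand side decays super-exponentially in $n=10^4 M^{1/16}$ and easily overcomes the $O(M^2)$ prefactor; combining with the estimate for $A_{M,1}'$ then gives $\P((A_M')^c)\to 0$, as required.
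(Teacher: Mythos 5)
Your proof is correct and follows essentially the same route as the paper: a Poisson/Chernoff union bound over the subsquares for $A_{M,1}'$, then monotonicity of descending chains (to pass from $X^{(1)}$ to $X$) together with the Mecke/nested-integral bound $\nu_2(R_M^-\oplus Q_M(o))\,\pi^{n-1}b^{2(n-1)}/(n-1)!$ and Stirling for $A_{M,2}'$. The paper simply cites the descending-chain estimate from \cite{lilypond} rather than reproducing the radial integral, and your explicit numerical check that $10^4 > 256\pi e$ is exactly the reason the constant in $A_{M,2}'$ is chosen as $10^4$.
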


\begin{proof}
Since we have already seen that $\P(A_{M,1}')$ tends to $1$ as $M\to\infty$, it suffices to show that $\lim_{M\to\infty}\P(A_{M,2}')=1$. First, monotonicity of descending chains allows us to prove this result when $X$ is replaced by $X^{(1)}$. But then the computations in~\cite[Section 3.2]{lilypond} show that the probability of obtaining a $16\varepsilon M$-bounded descending chain of $X^{(1)}$ starting in $R_M^-\oplus Q_M(o)$ and consisting of more than $10^4\varepsilon^2 M^2$ hops is at most 
$$\nu_2(R_M^-\oplus Q_M(o))\frac{(16^2\pi\varepsilon^2 M^2)^{10^4\varepsilon^2M^2}}{(10^4\varepsilon^2M^2)!}.$$
By Stirling's formula, this expression tends to $0$ as $M\to\infty$.
\end{proof}

We show now that under the event $A_{M,2}'$ shortest-path lengths in the annulus $(R_M^-\oplus Q_M(o))\setminus R_M^-$ are not too long.

\begin{lemma}
\label{rmmConfLem2}
If $X^{(1)}\in A_{M,2}'$ and $x,x'\in X^{(1)}\cap(R_M^-\oplus Q_{M/2}(o))$ are such that $|x-x'|\le 16\varepsilon M$, then $x$ and $x'$ can be connected by a path in $\Rng(X^{(1)})$ of length at most $\sqrt{M}$.
\end{lemma}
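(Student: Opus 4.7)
Our plan is to use a bisection procedure in $\Rng(X^{(1)})$ to build an explicit path between $x$ and $x'$ and then to bound its Euclidean length using the descending-chain control provided by the event $A_{M,2}'$. First, if $[x,x']$ is already an edge of $\Rng(X^{(1)})$, then the conclusion is immediate since $|x-x'|\le 16\varepsilon M\le\sqrt M$ for $M$ large. Otherwise, by the defining property of the relative neighborhood graph, there exists $z\in X^{(1)}$ lying in the open lens $B_{|x-x'|}(x)\cap B_{|x-x'|}(x')$, and we recursively build paths joining $(x,z)$ and $(z,x')$ and concatenate them. Because each bisection strictly decreases the pair distance and $X^{(1)}$ is locally finite, the recursion terminates and produces a path $x=y_0,y_1,\ldots,y_n=x'$ in $\Rng(X^{(1)})$ in which every edge has length at most $|x-x'|\le 16\varepsilon M$.

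The heart of the proof is to bound the number of edges $n$. Here we would invoke (or adapt) the descending-chain extraction argument of~\cite[Lemma~10]{aldLin}: inside any RNG-path whose edges have length at most $r$, one can extract an $r$-bounded descending chain of $X^{(1)}$ of comparable length. A short geometric argument --- each bisection vertex lies in the lens of its parent pair, so by induction all $y_i$ remain in an $O(\varepsilon M)$-neighborhood of the segment $[x,x']$ --- confirms that the extracted chain starts inside $R_M^-\oplus Q_M(o)$, precisely the region where $A_{M,2}'$ controls descending chains. Therefore $n\le 10^4\varepsilon^2 M^2$.

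Multiplying the bound on $n$ by the per-edge length bound $16\varepsilon M$ gives a total Euclidean path length of at most
\[
n\cdot 16\varepsilon M \;\le\; 16\cdot 10^4\,\varepsilon^3 M^3 \;=\; 16\cdot 10^4\, M^{3/32},
\]
using $\varepsilon=M^{-31/32}$. Since $3/32<1/2$, this is bounded by $\sqrt M$ for $M$ sufficiently large, which proves the lemma. The main obstacle is the descending-chain extraction step itself: the edge lengths along an arbitrary RNG path are not monotone, so reorganizing the path into a genuine descending chain requires a careful local analysis of the RNG in the spirit of~\cite{lilypond,aldLin}.
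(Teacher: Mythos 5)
Your bisection construction and the final arithmetic are in the right spirit, but the key step --- bounding the number of edges $n$ by $10^4\varepsilon^2 M^2$ --- is a genuine gap. The event $A_{M,2}'$, via the descending-chain argument of~\cite[Lemmas 10]{aldLin}, controls the \emph{depth} of the binary bisection recursion, not the number of its leaves; a depth bound $d$ only gives $n\le 2^d$ a priori. The edge lengths along the in-order traversal of the recursion tree are not monotone, so there is no obvious way to extract from the full path an $r$-bounded descending chain of comparable length, and as written that claim is false. Relatedly, the vertices need not stay in an $O(\varepsilon M)$-neighborhood of $[x,x']$: iterating the triangle inequality along a branch of depth $d$ can push them roughly $d\cdot 16\varepsilon M \approx 10^5\varepsilon^3 M^3$ away from $x$, which is exactly the ball radius the paper works with.

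The paper's proof handles the edge count differently, and this is the missing idea. The recursion-depth bound is used only to conclude that the path is contained in $B_{10^6\varepsilon^3 M^3}(x)$. The number of Poisson points in this ball is then bounded by $10^{13}\varepsilon^6 M^6$ using $A_{M,1}'$ (the subsquare occupancy control), and the fact that the relative neighborhood graph has maximum degree at most $6$ turns this into a bound on the number of edges, hence on the Euclidean length of the path: at most $10^{20}\varepsilon^9 M^9 \le \sqrt{M}$ for large $M$. Note that the paper's proof really uses $A_M' = A_{M,1}'\cap A_{M,2}'$, not just $A_{M,2}'$ as the hypothesis of the lemma suggests; the occupancy bound is essential, so an argument relying on $A_{M,2}'$ alone, as you attempt, would require a fundamentally different way to control $n$.
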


\begin{proof}
	Since $X^{(1)}\in A_M$, there does not exist a $16\varepsilon M$-bounded descending chain starting from $x$ and consisting of more than $10^4\varepsilon^2M^2$ hops. Hence, proceeding as in~\cite[Lemmas 10]{aldLin}, we see that $x$ and $x'$ can be connected by a path in $\Rng(X^{(1)})$ that is contained in $B_{10^6\varepsilon^3M^3}(x)$. Since $X^{(1)}\in A_M'$, the total number of Poisson points in $B_{10^6\varepsilon^3M^3}(x)$ is at most $10^{13}\varepsilon^6M^6$. Using that the maximum degree of the relative neighborhood graph is at most 6, it follows that $x$ and $x'$ can be connected by a path in $\Rng(X^{(1)})$ of length at most $10^{20}\varepsilon^9M^9$. By the choice of $\varepsilon=\varepsilon(M)$, this quantity is less than $\sqrt{M}$, provided that $M$ is sufficiently large.
\end{proof}

\subsection{The event $A''_{M}$}

The event $A''_{M}$ has to encode a certain control over shortest-path lengths on $G(X)$.

For any $P,P'\in G(X)$ recall that we let $\ell(P,P')=\ell_{G(X)}(P,P')$ denote the Euclidean length of the shortest path on $G(X)$ connecting $P$ and $P'$. It is shown in~\cite{zuyevDel} for the Delaunay triangulation and in~\cite{aldLin} for the relative neighborhood graph that there exists a deterministic value $\mu\in[1,4/\pi]$, called \emph{time constant}, such that almost surely
$$
\mu = \lim_{n\to\infty}n^{-1}{\E\ell(o,ne_1)} ~.
$$
If $o$ or $ne_1$ are not contained on $G(X)$, then we put $\ell(o,ne_1)=\ell(q(o),q(ne_1))$, where $q(o)$ and $q(ne_1)$ denote the closest points of $X$ to $o$ and $ne_1$, respectively.
In the following, we put $\mu_-=(1-\delta')\mu$ and $\mu_+=(1+\delta')\mu$, where $\delta'\in(0,1)$ is a small number that will be fixed in the proof of Lemma~\ref{fImplLem} below. First, we construct a family of events $\{A_M''\}_{M\ge1}$ such that $\lim_{M\to\infty}\P(X^{(1)}\in A_M'')=1$ and, almost surely, if $X^{(1)}\in A_M''$, then the following properties are satisfied, where we consider the subsquares $Q_{\varepsilon M}(v_i)$, $i\in\{1,\ldots,K_M\}$ introduced in the paragraph preceding Lemma~\ref{stabRadLem}:
\begin{enumerate} 
\item[{\bf (D1)}] if $Q_{\varepsilon M}(v_i)\cap Q_{\varepsilon M}(v_j)\ne\es$, $Q_{\varepsilon M}(v_i)\cap(R_M^-\oplus Q_{16\varepsilon M}(o))=\es$ and $Q_{\varepsilon M}(v_j)\cap(R_M^-\oplus Q_{16\varepsilon M}(o))=\es$ then $\ell_{G(X^{(1)})}(v_i,v_j)\le \mu_+|v_i-v_j|$
\item[{\bf (D2)}] if $r\ge M$ and $P,P'\in G(X^{(1)})\cap B_r(o)$ are such that $|P-P'|\ge{\delta r}$, then the following properties are satisfied:
\begin{enumerate}
	\item[(a)] if $([P,P']\oplus B_{r^{7/8}}(o))\cap R_M^-=\es$, then $\ell_{G(X^{(1)})}(P,P')\le \mu_+|P-P'|$,
	\item[(b)] if the geodesic from $P$ to $P'$ in $G(X^{(1)})$ does not hit the set $R_M^-\oplus Q_{8\varepsilon M}(o)$, then $\ell_{G(X^{(1)})}(P,P')\ge \mu_-|P-P'|$,
\end{enumerate}
\end{enumerate}

\begin{lemma}
\label{highProbLem2}
There exists a family of events $\{A_M''\}_{M\ge1}$ such that $\lim_{M\to\infty}\P(X^{(1)}\in A''_M)=1$ and for which conditions {\bf (D1)} and {\bf (D2)} are satisfied. 
\end{lemma}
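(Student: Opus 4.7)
The plan is to build $A_M''$ as the intersection of two families of sub-events: one addressing (D1) on the local scale $\varepsilon M$, the other addressing (D2) on a sequence of dyadic scales $r_n = 2^n M$, $n\ge 0$. The key input is the concentration estimate for $\ell_{G(X)}(P,P')$ established in~\cite{modDev}: there exist constants $c,\alpha>0$ such that for any deterministic $P,P'$ with $|P-P'|\ge 1$,
\[
\P\bigl(|\ell_{G(X)}(P,P') - \mu|P-P'|| \ge \delta'\mu|P-P'|\bigr) \le \exp\bigl(-c|P-P'|^{\alpha}\bigr),
\]
together with its companion statement that, on the same large-probability event, the geodesic from $P$ to $P'$ is contained in a tube of sublinear width around the segment $[P,P']$. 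Since (D1) and (D2) both ask for linear bounds with prefactors $\mu_{\pm}=(1\pm\delta')\mu$, these estimates are exactly in the required form.

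\textbf{Transferring between $G(X)$ and $G(X^{(1)})$.} The event $A_M''$ concerns $\ell_{G(X^{(1)})}$, while~\cite{modDev} provides concentration for the unmodified $X$. The stabilization Lemma~\ref{stabRadLem} ensures that any edge of $G(X^{(1)})$ disjoint from $R_M^-\oplus Q_{8\varepsilon M}(o)$ is also an edge of $G(X)$, and conversely for edges of $G(X)$ not meeting $R_M^-$. Hence, for (D2)(b) the hypothesis that the $G(X^{(1)})$-geodesic avoids $R_M^-\oplus Q_{8\varepsilon M}(o)$ turns it into a valid path of $G(X)$, so $\ell_{G(X^{(1)})}(P,P')\ge \ell_{G(X)}(P,P')$ and a lower bound on the latter suffices. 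For (D1) and (D2)(a), the assumptions on the geometry (distance of $[P,P']$ from $R_M^-$ much larger than the tube width) force any typical geodesic to lie outside $R_M^-\oplus Q_{8\varepsilon M}(o)$, so upper bounds on $\ell_{G(X)}(P,P')$ transfer directly to $\ell_{G(X^{(1)})}(P,P')$.

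\textbf{Condition (D1).} Apply the concentration bound to each pair $(v_i,v_j)$ of centres of neighbouring subsquares with both squares disjoint from $R_M^-\oplus Q_{16\varepsilon M}(o)$, noting that $|v_i-v_j|$ is of order $\varepsilon M=M^{1/32}$. The tube part of the concentration estimate places the geodesic at distance $o(\varepsilon M)$ from $[v_i,v_j]$, hence outside $R_M^-\oplus Q_{8\varepsilon M}(o)$. Since there are only $O(\varepsilon^{-2})=O(M^{31/16})$ such pairs, a union bound yields a bad probability at most $C M^{31/16}\exp(-c(\varepsilon M)^{\alpha})$, which tends to $0$.

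\textbf{Condition (D2).} At each dyadic scale $r_n=2^n M$ ($n\ge 0$), declare the sub-event $A_n''$ by requiring that for every Poisson pair $P,P'\in X\cap B_{r_n}(o)$ with $|P-P'|\ge \delta r_n/2$ the two-sided concentration estimate
\[
\mu_-|P-P'|\ \le\ \ell_{G(X)}(P,P')\ \le\ \mu_+|P-P'|
\]
holds. A union bound over Poisson pairs (whose number in $B_{r_n}$ is at most $r_n^{5}$ on a standard high-probability event for the Poisson count) combined with the concentration estimate gives $\P((A_n'')^c)\le C r_n^{5}\exp(-c(\delta r_n)^{\alpha})$, which is summable in $n$ and whose total vanishes as $M\to\infty$. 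For any $r\ge M$ lying in $[r_n,r_{n+1})$ and any $P,P'\in G(X^{(1)})\cap B_r(o)$ with $|P-P'|\ge \delta r$, the transfer argument above reduces the estimates in (D2) to the inequalities furnished by $A_{n+1}''$. Setting $A_M''$ to be the intersection of the (D1)-event and $\bigcap_{n\ge 0} A_n''$, the union bound gives $\P(X^{(1)}\in A_M'')\to 1$.

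\textbf{Main obstacle.} The chief technical difficulty is to combine the concentration of $\ell$ with the transversal concentration of the geodesic in a single summable estimate that handles simultaneously (i) all pairs at all dyadic scales $r_n\ge M$ and (ii) the switch between $G(X)$ and the punctured graph $G(X^{(1)})$; in particular, for (D2)(a) one must verify that the tube of width $r^{7/8}$ around $[P,P']$ indeed contains the geodesic, which is precisely the form in which the result of~\cite{modDev} is available but must be invoked with care so that the polynomial loss from the union bound over Poisson pairs is absorbed by the stretched-exponential decay.
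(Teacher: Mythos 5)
Your overall strategy is close in spirit to the paper's: invoke the concentration estimates of \cite{modDev} for the full Poisson process $X$, and then use the stabilization Lemma~\ref{stabRadLem} to transfer the upper and lower bounds to the punctured process $X^{(1)}$ on the relevant region. One noticeable structural difference is that you dispense with the Fubini-type construction of a deterministic configuration $\varphi\subset R_M^-$ (which the paper borrows from~\cite[Lemma~8]{efpp} so that $A_M''$ can be written explicitly in terms of $\varphi\cup X^{(1)}$); instead you transfer directly from $X$ to $X^{(1)}$, which is a legitimate simplification since $X = X^{(1)}\cup(X\cap R_M^-)$ already plays the role that $\varphi\cup X^{(1)}$ plays in the paper. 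Your dyadic decomposition in $n$ is a cosmetic variant of the paper's union over integers $n\ge M$.

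There is, however, a genuine gap in the way you carry out the probabilistic union bound. You apply the concentration estimate, which is stated for \emph{deterministic} endpoints, to the (random) Poisson pairs $P,P'\in X\cap B_{r_n}(o)$, and then appeal to a union bound ``over Poisson pairs (whose number is at most $r_n^5$)''. This step is not valid as written: the locations of the Poisson pairs are not independent of the geodesic lengths, so one cannot simply multiply the pointwise estimate by a bound on the random number of pairs. A related issue is that conditions \textbf{(D1)} and \textbf{(D2)} quantify over \emph{all} points $P,P'$ on the edge set of the graph, not just Poisson points, so your event $A_n''$ is in any case too weak. Both problems are resolved at once by the discretization that the paper employs: for each deterministic pair of lattice sites $z,z'\in\Z^2$ one controls the probability of the event ``there exist bad $P\in G(X)\cap Q_1(z)$ and $P'\in G(X)\cap Q_1(z')$'', which is precisely the form in which \cite[Theorems~1 and 2]{modDev} gives stretched-exponential decay in $|z-z'|$, and then one unions over the polynomially many deterministic pairs $(z,z')$. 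An alternative would be a Mecke/Palm-calculus argument, but the deterministic discretization is the cleaner route and is what the paper does; your proposal should be repaired by replacing the union bound over random Poisson pairs with this discretization.
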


\begin{proof}
Suppose that we could construct a family of events $\{A_M^*\}_{M\ge1}$ such that $\lim_{M\to\infty}\P(X\in A_M^*)=1$ and such that if $X\in A_M^*$, then the following condition is satisfied:
\begin{enumerate}
\item[{\bf (D1')}] if $Q_{\varepsilon M}(v_i)\cap Q_{\varepsilon M}(v_j)\ne\es$, then $q(v_i)$ and $q(v_j)$ are connected by a path $\gamma$ in $G(X)$ such that $\nu_1(\gamma)\le\mu_+|v_i-v_j|$ and $\gamma$ is contained in $(Q_{\varepsilon M}(v_i)\cup Q_{\varepsilon M}(v_j))\oplus B_{3\varepsilon M}(o)$. 
\item[{\bf (D2')}] for all integers $n\ge M$ and all $P,P'\in G(X)\cap B_{2n}(o)$ with $|P-P'|\ge\tfrac12 \delta n$ it holds that 
\begin{enumerate}
\item[(a)] $P$ and $P'$ can be connected by a path $\gamma$ in $G(X)$ such that $\nu_1(\gamma)\le\mu_+|P-P'|$ and $\gamma$ is contained in $[P,P']\oplus B_{n^{7/8}/2}(o)$. 
\item[(b)] $\ell_{G(X)}(P,P')\ge\mu_-|P-P'|$,
\end{enumerate}
\end{enumerate}

As in~\cite[Lemma 8]{efpp}, Fubini's theorem produces a configuration $\varphi\subset R_M^-$ such that $\P(\varphi\cup X^{(1)}\in A_M^*)\ge\P(X\in A_M^*)$. Then, we let $\{X^{(1)}\in A_M''\}$ denote the event that $\varphi\cup X^{(1)}\in A_M^*$ and $X^{(1)}\in A_M'$. In particular, $\lim_{M\to\infty}\P(X^{(1)}\in A_M'')=1$. Moreover, we claim that if $X^{(1)}\in A_M''$, then conditions {\bf (D1)} and {\bf (D2)} are satisfied. 
Regarding condition {\bf (D1)} let $v_i$ and $v_j$ be subsquare centers satisfying the desired conditions. Then, by {\bf (D1')}, $q(v_i)$ and $q(v_j)$ are connected by a path $\gamma$ in $G(X^{(1)}\cup \varphi)$ such that $\nu_1(\gamma)\le\mu_+|v_i-v_j|$ and $\gamma$ is contained in $(Q_{\varepsilon M}(v_i)\cup Q_{\varepsilon M}(v_j))\oplus B_{3\varepsilon M}(o)$. Since we know that $X^{(1)}\in A_{M,1}'$, we conclude from Lemma~\ref{stabRadLem} that $\gamma$ is also a path in $G(X^{(1)})$, so that $\ell_{G(X^{(1)})}(v_i,v_j)\le \mu_+|v_i-v_j|$.
For condition {\bf (D2)} we may argue similarly. Indeed, let $r\ge M$ and $P,P'\in G(X^{(1)})\cap B_r(o)$ be as in condition {\bf (D2)}. If $([P,P']\oplus B_{r^{7/8}}(o))\cap R_M^-=\es$, then, by condition {\bf (D2')} for $n=\lceil r\rceil$, the points $P,P'$ can be connected by a path $\gamma$ in $G(X^{(1)}\cup\varphi)$ such that $\nu_1(\gamma)\le\mu_+|P-P'|$ and $\gamma$ is contained in $[P,P']\oplus B_{n^{7/8}/2}(o)$. Moreover, by Lemma~\ref{stabRadLem}, $\gamma$ is also a path in $G(X^{(1)})$.
Next, we assume that the geodesic from $P$ to $P'$ in $G(X^{(1)})$ does not hit the set $R_M^-\oplus Q_{8\varepsilon M}(o)$. In particular, again using Lemma~\ref{stabRadLem}, we deduce that this geodesic is also a path in $G(X^{(1)}\cup \varphi)$. Therefore, condition {\bf (D2')} implies that
$$\ell_{G(X^{(1)})}(P,P')\ge \ell_{G(X^{(1)}\cup\varphi)}(P,P')\ge \mu_-|P-P'|.$$
Hence, it remains to construct the family $\{A_M^*\}_{M\ge1}$ with the desired properties. Let $z,z'\in\Z^d\cap B_{3n}(o)$ be arbitrary. Then, it is shown in~\cite[Theorems 1 and 2]{modDev} that the probability that there exist $P\in G(X)\cap Q_1(z)$ and $P'\in G(X)\cap Q_1(z')$ such that
\begin{enumerate}
\item $P$ and $P'$ cannot be connected by a path $\gamma$ in $G(X)$ such that $\nu_1(\gamma)\le\mu_+|P-P'|$ and $\gamma$ is contained in $[P,P']\oplus B_{n^{7/8}/2}(o)$, or 
\item $\ell_{G(X)}(P,P')<\mu_-|P-P'|$
\end{enumerate}
decays at  stretched exponential speed in $|z-z'|$. Since the total number of $z,z'\in \Z^2$ such that $Q_1(z)\cap B_{2n}(o)\ne\es$ and $Q_1(z')\cap B_{2n}(o)\ne\es$ grows polynomially in $n$, this completes the proof.
\end{proof}

\subsection{How $A_M'\cap A_M''\cap E_M$ implies $F_M$}

Now, we put $A_M= A_M'\cap A_M''\cap E_M$ and provide a key deterministic argument which shows that $X^{(1)}\in A_M$ implies $X^{(1)}\in F_M$.

\begin{lemma}
\label{fImplLem}
Let $A_M= A_M'\cap A_M''\cap E_M$. Then $\{X^{(1)}\in F_M\}$ holds almost surely under the event $\{X^{(1)}\in A_M\}$.
\end{lemma}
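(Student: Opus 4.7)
The plan is to verify each of the three defining properties of $F_M$ on the event $\{X^{(1)}\in A_M\}$, drawing on $E_M$ for the distinguished geodesic $\gamma_{\ms{m}}$, $A_M'$ for the combinatorial structure of $G(X^{(1)})$ around the obstacle $R_M^-$, and $A_M''$ for the concentration of shortest-path lengths.

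For properties (1) and (2) of $F_M$, the event $E_M$ directly supplies the midpoint $P_{\ms{m}}\in\{0\}\times[-\delta M,\delta M]$, an edge $[x_{\ms{m}}^-,x_{\ms{m}}^+]$ of $G(X^{(1)})$ crossing $\ellv_0$ at $P_{\ms{m}}$, and the $\hat{u}_0$-unigeodesic $\gamma_{\ms{m}}=\gamma^{(0)}_{x_{\ms{m}}^+}$. Edge containment in $B_{M/2}(o)$ follows once $M$ is large from the height bound $|\pi_2(P_{\ms{m}})|\le\delta M$ together with standard edge-length control near a compact region. For cone containment I would argue by contradiction: if $\gamma_{\ms{m}}$ left $C_\delta(Q)\oplus B_{\delta M}(o)$ at some point $y$ far from the origin, then combining the asymptotic direction $\hat{u}_0$ with the upper concentration bound \textbf{(D2)(a)} produces an alternative path from $x_{\ms{m}}^+$ to a later point of $\gamma_{\ms{m}}$ on the cone axis of length strictly below what the lower bound \textbf{(D2)(b)} forces the detour through $y$ to cost, contradicting geodesic minimality with respect to $\nu_1^{(0)}$.

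For property (3), fix $z,[x^-,x^+],\gamma,P$ as in the statement and assume $\gamma[P]\cap\gamma_{\ms{m}}\ne\es$. The proof of Lemma~\ref{geoProdLem} uses only planarity and the uniqueness provided by $D_0(\hat{u}_0)$, so it applies verbatim to $X^{(1)}\in E_M$ and forces $|\pi_2(P)|\le\delta M$. The case $\pi_1(z)=0$ is immediate: $|\pi_2(Mz)|\ge2M$ and the containment of $[x^-,x^+]$ in $B_{M/2}(Mz)$ push $|\pi_2(x^+)|\ge 3M/2$, while the backtracking prohibition traps $\gamma$ in $H^+$ at heights exceeding $M$, contradicting $|\pi_2(P)|\le\delta M$. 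The substantive case is $\pi_1(z)\le-4$, where $x^+$ sits near $Mz$ well to the left of $R_M$ while $P$ has $|\pi_2(P)|\le\delta M$, so $\gamma[x^+,P]$ is forced to detour around the obstacle $R_M^-$. Choose a point $P^*\in\gamma$ with $\pi_1(P^*)$ of order $M$ (large enough for \textbf{(D2)(a)}/\textbf{(b)} to bite, but small enough that the slack $(\mu_+-\mu_-)|x^+-P^*|$ remains $O(\delta'M)$) and split $\nu_1(\gamma[x^+,P^*])=\nu_1(\gamma[x^+,P])+\nu_1(\gamma[P,P^*])$. The second piece lies in $H^+$ and avoids $R_M^-\oplus Q_{8\varepsilon M}(o)$, so \textbf{(D2)(b)} gives $\nu_1(\gamma[P,P^*])\ge\mu_-|P-P^*|$. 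For the first piece, any path in $G(X^{(1)})$ from $x^+$ to $P$ must either traverse the narrow corridors flanking $R_M^-$ (controlled by Lemma~\ref{rmmConfLem2} and \textbf{(D1)}) or use one of the long diagonal edges of length at least $(\sqrt{2}\delta-32\sqrt{\varepsilon})M$ identified by Corollary~\ref{diagDelCor} and Remark~\ref{diagDelRem}; either way $\nu_1(\gamma[x^+,P])$ exceeds $|x^+-P|$ by a geometric detour of order $M$. I then build an alternative path from $x^+$ to $P^*$ by using \textbf{(D1)} to chain subsquare centers from $x^+$ up (or down) to a corridor adjacent to $R_M^-$, applying \textbf{(D2)(a)} to the long east-bound leg whose $r^{7/8}$-neighborhood avoids $R_M^-$, and descending via \textbf{(D1)} to $P^*$; the total length is at most $\mu_+$ times the direct around-$R_M^-$ distance from $x^+$ to $P^*$, which does not incur the additional swing into the neighborhood of the origin that $\gamma$ is forced to execute.

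The main obstacle is securing this $\Theta(M)$ detour gap uniformly in $z$. For $z$ with $\pi_2(z)=0$ and $|\pi_1(z)|$ large, the Euclidean triangle-inequality slack between the ``via $P$'' route and the straight eastward route degenerates to $o(M)$, so the contradiction rests entirely on the obstacle-detour cost encoded by Lemma~\ref{rmmConfLem}, Corollary~\ref{diagDelCor} and the structural picture of Figure~\ref{deleteFig}. This is precisely where the combinatorial control on $G(X^{(1)})$ near $R_M^-$ becomes essential and where the ``self-healing'' difficulty highlighted in the introduction would derail any naive monotone argument. Once $\delta'$ is chosen small enough (depending only on $\delta$ and $\mu$) so that the geometric detour dominates the concentration slack, the upper bound on the alternative path falls strictly below the lower bound on $\nu_1(\gamma[x^+,P^*])$, contradicting the geodesic property of $\gamma$ with respect to $\nu_1^{(M\pi_1(z))}$ and completing the proof.
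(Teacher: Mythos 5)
Your high-level strategy---contradict the geodesic property of $\gamma$ by producing an alternative route whose length (controlled from above via {\bf (D1)}, {\bf (D2)(a)} and Lemma~\ref{rmmConfLem2}) falls strictly below the length that the obstacle forces $\gamma$ to incur (controlled from below via {\bf (D2)(b)} and the structural Lemma~\ref{rmmConfLem}/Corollary~\ref{diagDelCor})---is indeed the paper's strategy, and your use of Lemma~\ref{geoProdLem} to reduce to $|\pi_2(P)|\le\delta M$ and $n<0$ matches the paper exactly. You have also correctly identified the crux: making the detour gap dominate the concentration slack uniformly over all lattice sites $z$.

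The problem is that you do not actually close that gap, and the way you set up the comparison guarantees you cannot. You anchor the length comparison at $x^+$ (near $Mz$) and at a point $P^*$ with $\pi_1(P^*)$ of order $M$, so the distance scale in your inequalities is $|x^+-P^*|\asymp|\pi_1(z)|M$, which is unbounded in $z$. The concentration slack is then $(\mu_+-\mu_-)|x^+-P^*|\asymp\delta'|\pi_1(z)|M$, while the obstacle-detour gain is only $\Theta(M)$; no choice of $\delta'$ independent of $z$ can make the latter dominate the former for all $z$. Your closing sentence (``Once $\delta'$ is chosen small enough (depending only on $\delta$ and $\mu$) so that the geometric detour dominates the concentration slack...'') asserts exactly what fails. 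The paper sidesteps this entirely by anchoring the comparison not at $x^+$ but at a point $P_1$ on $\gamma$ close to the obstacle boundary (the last crossing of a ray $\{-\xi M\}\times(\R\setminus[-M,M])$, resp.\ of $[(-2-\delta)M,(-1+\delta)M]\times\{\pm M\}$) and at a point $Q$ on $\gamma[P]$ at a fixed multiple $m$ of $M$ (or of $\pi_2(P_1)$) to the right. Since $\gamma$ is a geodesic w.r.t.\ $\nu_1^{(nM)}$, additivity along $\gamma$ gives $\ell^{(nM)}(P_1,Q)=\ell^{(nM)}(P_1,P)+\ell^{(nM)}(P,Q)$, and the monotonicity $\ell^{(-M)}(P_1,Q)\ge\ell^{(nM)}(P_1,Q)$ lets one compare with an alternative path measured in $\nu_1^{(-M)}$. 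All quantities are then of order $mM$ with $m$ a fixed large constant, so the choice of $\delta'$ is genuinely independent of $z$, and the contradiction (Lemmas~\ref{contr1Lem}, \ref{contr2Lem}) goes through. Your sketch also collapses the paper's necessary case distinction ($A_M^{(1)}$ with subcases $(1,a)/(1,b)$, versus $A_M^{(2)}$ with subcases $(2,a)/(2,b)$) into a single informal picture, which hides that the lower bounds have qualitatively different sources (pure triangle-inequality loss versus the quantified cost of the long diagonal Delaunay edges from Corollary~\ref{diagDelCor}); and the $\Rng$ case is disposed of via Remark~\ref{diagRngRem}, not by the same geometric bound.

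A secondary issue: for the $\pi_1(z)=0$ case, you argue that the backtracking prohibition ``traps $\gamma$ in $H^+$ at heights exceeding $M$'', but $\nu_1^{(0)}$ only forbids right-to-left crossings of $\ellv_0$; it does not constrain the vertical coordinate. The correct observation (as in the Remark following Lemma~\ref{geoProdLem}) is simply that $|\pi_2(P)|\le\delta M$ cannot hold, so by Lemma~\ref{geoProdLem} the geodesics are automatically disjoint under $E_M$ alone. This is minor, but your stated reason is incorrect.
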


As noted in the beginning of this section, once Lemma~\ref{fImplLem} is established, the proof of Proposition~\ref{part2Lem} is complete. 

To prove Lemma~\ref{fImplLem}, we proceed by contradiction. Hence, using Lemma~\ref{geoProdLem}, we assume that there exists $z\in4\Z\times2\Z$, $x_-\in X^{(1)}\cap(Mz+H^-)$ and $x_+\in X^{(1)}\cap(Mz+H^+)$ such that 
\begin{enumerate}
\item $n=\pi_1(z)<0$ and $[x^-,x^+]$ forms an edge in $G(X^{(1)})$ that is contained in $B_{M/2}(Mz)$,
\item $\gamma=\gamma^{(nM)}_{x^+}$ exists, $\gamma[P]$ coalesces with $\gamma_{\ms{m}}$ and $|\pi_2(P)|\le\delta M$, where $P$ denotes the last intersection point of $\gamma$ and $\ellv_0$.
\end{enumerate}
Note that the reduction to the case $n<0$ is a consequence of the remark after Lemma~\ref{geoProdLem}.

Now, we distinguish several cases. First, assume that there exists $\xi\in[0,1-\delta]$ such that $\gamma$ intersects the union of rays $\{-\xi M\}\times(\R\setminus[-M,M])$ at some point $P_1$. Without loss of generality, we assume that $\pi_2(P_1)\ge M$ and that $P_1$ is the last such intersection point. Furthermore, let $Q$ denote a point of $\gamma[P]$ satisfying $\pi_1(Q)=m\pi_2(P_1)-\xi M$. Here, $m\ge2$ is a sufficiently large integer that will be fixed in the course of the proof. Figure~\ref{case1Fig} provides a rough illustration. In particular, it does not show edges crossing $R^{-}_{M}$.


\begin{figure}[!htpb]
\centering
\begin{tikzpicture}[scale=1]
\draw (-4,-1)--(0,-1)--(0,1)--(-4,1)--(-4,-1);
\draw[dashed] (-1,1)--(-1,0);
\draw[dashed] (-0.6,1)--(-0.6,3);
\draw[dashed] (-0.6,1)--(-0.6,0);
\draw[dashed] (-1,1)--(-1,-1);
\draw[dashed]  plot [smooth, tension=1] coordinates { (-0.6,2) (0.8,1.5) (1.5,1) (3.4,0.1) };
\fill (-0.6,2) circle (2pt);
\fill (3.4,0.1) circle (2pt);
\fill (0,0) circle (2pt);
\coordinate[label=-0:$P_1$] (B) at (-0.6,2);
\coordinate[label=-0:$P_2$] (F) at (-0.6,0.4);
\fill (F) circle (2pt);
\coordinate[label=0:$Q$] (E) at (3.4,0.1);
\coordinate (C) at (0,0.15);
\coordinate (CC) at (0,-0.15);
\coordinate[label=-135:$P$] (CCC) at (0,0);
\draw[decorate,decoration={brace,mirror}] (-0,-1.1) -- (3.4,-1.1);
\draw[decorate,decoration={brace,mirror}] (-0.6,-1.1)--(-0,-1.1) ;
\coordinate[label=180:\textcolor{blue}{$\gamma_{\ms{u}}$}] (Q) at (2.4,0.4);
\coordinate[label=180:\textcolor{blue}{$\gamma_{\ms{d}}$}] (Q) at (2.4,-0.3);
\coordinate[label=180:$\gamma$] (Q) at (-0.6,1.3);
\coordinate[label=-90:$m\pi_2(P_1)-\xi M$] (Q) at (1.7,-1.1);
\coordinate[label=-90:$\xi M$] (Q) at (-0.2,-1.1);
\draw  plot [smooth, tension=0.8] coordinates { (-0.5,2.5) (-0.6,2) (-0.8,1) (F)   (CCC)  (E) };
\draw[blue,decoration = {coil, aspect=0},decorate,very thin] (C)--(4,0.3);
\draw[blue,decoration = {coil, aspect=0},decorate,very thin] (CC)--(4,-0.1);
\end{tikzpicture}
\caption{Illustration of the event $A_M^{(1)}$}
\label{case1Fig}
\end{figure}

Let $A_M^{(1)}$ denote the event that $A_M\cap F_M^c$ occurs and that a point $P_1$ as described above exists. Under the event $A_M^{(1)}$ we derive upper and lower bounds on the shortest-path length $\ell^{(-M)}(P_1,Q)$ that cannot be satisfied simultaneously. 

\begin{lemma}
\label{upBound1Lem}
Almost surely under the event $A_M^{(1)}$,
$$\ell^{(-M)}(P_1,Q)\le \mu_+((m+m^{-1})\pi_2(P_1)+16\delta m \pi_2(P_1)).
$$
\end{lemma}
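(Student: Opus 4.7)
My plan is to realize $\ell^{(-M)}(P_1,Q)$ through the direct line segment $[P_1,Q]$ and invoke the length-control property \textbf{(D2)(a)} provided by the event $A_M''$. Two pieces of work are required: a sharp upper bound on the Euclidean distance $|P_1-Q|$, and a verification that the short path produced by \textbf{(D2)(a)} stays strictly to the right of $\ellv_{-M}$, so that its $\nu_1^{(-M)}$-length equals its Euclidean length.

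For the distance estimate, I would first control $\pi_2(Q)$. Since $\gamma[P]$ coalesces with $\gamma_{\ms m}$, and $\gamma_{\ms m}$ is a $\hat u_0$-unigeodesic starting from a point of $\{0\}\times[-\delta M,\delta M]$ while $\hat u_0$ has angle at most $\delta$ with $e_1$, the $y$-fluctuation of $\gamma_{\ms m}$ at horizontal scale $\pi_1(Q)=m\pi_2(P_1)-\xi M$ is at most $\delta M + \delta\pi_1(Q) + O(1)\leq 2\delta m\pi_2(P_1)$ for $M$ large. Combining this with $\pi_1(Q)-\pi_1(P_1)=m\pi_2(P_1)$ and applying $\sqrt{a^2+b^2}\leq a + b^2/(2a)$ with $a=m\pi_2(P_1)$ and $b=\pi_2(P_1)(1+2\delta m)$ gives
\[
|P_1 - Q|\;\leq\; \pi_2(P_1)\bigl(m + m^{-1} + 16\delta m\bigr),
\]
after discarding lower-order terms using $\delta$ small and $m\geq 2$.

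It remains to bound $\ell^{(-M)}(P_1,Q)$ by $\mu_+|P_1-Q|$. Take $r$ of order $m\pi_2(P_1)$, so that $P_1,Q\in B_r(o)$, $r\geq M$ (since $\pi_2(P_1)\geq M$), and $|P_1-Q|\geq\delta r$ for $m$ large. Both endpoints satisfy $\pi_1(P_1)=-\xi M\geq -(1-\delta)M$ and $\pi_1(Q)>0$, so $[P_1,Q]$ lies in $\{x\geq -(1-\delta)M\}$, at distance at least $\delta M$ from $R_M^-$. In the regime at issue one has $r^{7/8}\ll \delta M$, so condition \textbf{(D2)(a)} yields a path in $G(X^{(1)})$ from $P_1$ to $Q$ of Euclidean length at most $\mu_+|P_1-Q|$, confined to the $r^{7/8}$-neighborhood of $[P_1,Q]$. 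That neighborhood has $x$-coordinates bounded below by $-(1-\delta)M-r^{7/8}>-M$, so the path never crosses $\ellv_{-M}$, and its $\nu_1^{(-M)}$-length coincides with its Euclidean length. Putting everything together gives the claimed estimate.

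The main obstacle I anticipate is the $y$-control of $Q$: the event $E_M$ as stated only provides existence and disjointness of $\gamma_{\ms u},\gamma_{\ms m},\gamma_{\ms d}$, so the cone behavior of $\gamma_{\ms m}$ at horizontal scale $m\pi_2(P_1)$ must either be extracted from the $Y''$-construction underlying Lemma~\ref{geodConstrLem} or absorbed into the fluctuation bounds encoded in $A_M''$ via Lemma~\ref{uniGeoLem} and the concentration results of~\cite{modDev}.
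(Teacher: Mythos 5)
Your overall approach matches the paper's: invoke \textbf{(D2)(a)} to get $\ell^{(-M)}(P_1,Q)\le\mu_+|P_1-Q|$, then control $|P_1-Q|$ by bounding $|\pi_2(Q)|$, paying attention to the factor $\sqrt{m^2+1}\le m+m^{-1}$. Your remark that one should check the \textbf{(D2)(a)}-path is confined near $[P_1,Q]$ and therefore stays strictly to the right of $\ellv_{-M}$ is a valid point that the paper leaves implicit; the $r^{7/8}$-tube condition in the precursor statement \textbf{(D2')(a)} makes this clean.

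The one genuine gap is the $\pi_2(Q)$-bound. You derive it from ``$\gamma[P]$ coalesces with $\gamma_{\ms m}$'' together with the $y$-fluctuation of $\gamma_{\ms m}$ at horizontal scale $\pi_1(Q)$. This does not work as stated: the coalescence point of $\gamma[P]$ and $\gamma_{\ms m}$ may well lie \emph{beyond} $Q$, in which case $Q$ is not on $\gamma_{\ms m}$ and the fluctuation bound for $\gamma_{\ms m}$ says nothing about $Q$. The correct mechanism, used in the paper, is a sandwich: $\gamma[P]$ starts at $P$ with $|\pi_2(P)|\le\delta M$ (Lemma~\ref{geoProdLem}), stays in $H^+$ after $P$, is a $\hat u_0$-unigeodesic w.r.t.~$\nu_1^{(0)}$ there, and hence by planarity and the uniqueness statement of Lemma~\ref{uniGeoLem} can cross neither $\gamma_{\ms u}$ nor $\gamma_{\ms d}$. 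Since $\gamma_{\ms u}$ and $\gamma_{\ms d}$ are confined to dilated cones $C_\delta(\cdot)\oplus B_{K_0}(o)$ (this is precisely what the $Y''$-thinning in the proof of Lemma~\ref{geodConstrLem} buys), $Q\in\gamma[P]$ is squeezed between two cone boundaries, yielding $|\pi_2(Q)|\le\tan(4\delta)\,m\pi_2(P_1)+2\delta M/\cos(4\delta)\le 16\delta m\pi_2(P_1)$. You correctly flag the ``$y$-control of $Q$'' as the obstacle and even point to the $Y''$-construction as a source, but the mechanism is trapping between $\gamma_{\ms u}$ and $\gamma_{\ms d}$, not coalescence with $\gamma_{\ms m}$; replacing one with the other closes the gap.
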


\begin{lemma}
\label{lowBound10Lem}
Almost surely under the event $A_M^{(1)}$,
$$\ell^{(nM)}(P,Q)\ge\mu_-(m\pi_2(P_1)-\xi M-17\delta m\pi_2(P_1)).$$
\end{lemma}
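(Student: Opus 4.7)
The plan is to combine a monotonicity identity with two alternative lower bounds on $\ell_{G(X^{(1)})}(P,Q)$, one via the concentration hypothesis $\textbf{(D2)(b)}$ and one via a direct detour argument, and then take whichever applies for the given configuration. Monotonicity is immediate: every oriented path on $G(X^{(1)})$ that is admissible for the modified cost $\nu_1^{(nM)}$ is \emph{a fortiori} admissible for the unrestricted Euclidean length $\nu_1$, so
\[
\ell^{(nM)}(P,Q)\;\geq\;\ell_{G(X^{(1)})}(P,Q),
\]
reducing the task to bounding the right-hand side from below.

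Write $\gamma'$ for the Euclidean geodesic from $P$ to $Q$ in $G(X^{(1)})$ and split into two cases. In Case A, $\gamma'$ avoids the obstacle $R_M^-\oplus Q_{8\varepsilon M}(o)$. Here I would apply $\textbf{(D2)(b)}$ with $r=\max\{M,|P|,|Q|\}$. The preconditions $r\geq M$, $P,Q\in B_r(o)$ and $|P-Q|\geq\delta r$ follow from $|\pi_2(P)|\leq\delta M$ (Lemma~\ref{geoProdLem}), elementary geometry controlling $|Q|$ in terms of $\pi_1(Q)=m\pi_2(P_1)-\xi M$ via the cone of asymptotic directions for $\gamma[P]$, and the trivial estimate $|P-Q|\geq\pi_1(Q)-\pi_1(P)\geq(m-1)M$. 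One then concludes $\ell_{G(X^{(1)})}(P,Q)\geq\mu_-|P-Q|\geq\mu_-(m\pi_2(P_1)-\xi M)$, which dominates the target since $17\delta m\pi_2(P_1)\geq 0$.

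In Case B, $\gamma'$ meets the obstacle at some point $x^*$. Because $\pi_1(x^*)\leq -M+4\varepsilon M$ while $\pi_1(P)=0$ and $\pi_1(Q)>0$, the detour forces
\[
\ell_{G(X^{(1)})}(P,Q)\;=\;\nu_1(\gamma')\;\geq\;|P-x^*|+|x^*-Q|\;\geq\;m\pi_2(P_1)-\xi M+2M-8\varepsilon M.
\]
A direct arithmetic comparison with $\mu_-(m\pi_2(P_1)-\xi M-17\delta m\pi_2(P_1))$ yields the desired inequality provided the coefficient $1-\mu_-+17\delta\mu_-$ of $m\pi_2(P_1)$ is non-negative, i.e.\ $17\delta\geq 1-1/\mu_-$. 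Thanks to the universal bound $\mu\leq 4/\pi$ and the freedom to take $\delta'$ arbitrarily small, this reduces to $\delta\gtrsim(1-\pi/4)/17\approx 0.013$, which is consistent with $\delta$ being the inverse of a sufficiently large integer fixed at the outset of the manuscript.

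Combining the two cases gives the lemma almost surely under $A_M^{(1)}$. The delicate step—and the reason for the slack $-17\delta m\pi_2(P_1)$ built into the statement—is the arithmetic in Case B, where the $2M$ gain obtained from the detour around $R_M^-\oplus Q_{8\varepsilon M}(o)$ must be enough to absorb the factor $\mu_-$ in front of the leading term $m\pi_2(P_1)$. Up to this quantitative calibration of $\delta$, everything else is a straightforward application of monotonicity of the modified cost and of the lower concentration bound $\textbf{(D2)(b)}$ from the event $A_M''$.
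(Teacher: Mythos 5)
Your monotonicity step $\ell^{(nM)}(P,Q)\ge\ell_{G(X^{(1)})}(P,Q)$ is where the proof goes wrong: it discards exactly the information that makes the lemma true. The quantity $\ell^{(nM)}(P,Q)$ equals $\nu_1(\gamma[P,Q])$, where $\gamma[P,Q]$ is the subpath of the modified geodesic $\gamma$ between $P$ and $Q$. Since $P$ is by definition the \emph{last} intersection of $\gamma$ with $\ellv_0$, the whole subpath $\gamma[P,Q]$ lies in $H^+$ and therefore avoids $R_M^-\oplus Q_{8\varepsilon M}(o)$ automatically. By Lemma~\ref{stabRadLem} this subpath is then also a path in $G(X^{(1)}\cup\varphi)$, so condition {\bf (D2')}(b) applies directly to it and yields $\ell^{(nM)}(P,Q)=\nu_1(\gamma[P,Q])\ge\mu_-|P-Q|$. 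Your ``Case B'' simply never arises once one keeps track of this. The paper's one-line proof uses this route and then just bounds $|P-Q|\ge |Q'|-|P|-|\pi_2(Q)|$ with $Q'=(m\pi_2(P_1)-\xi M,0)$, $|P|\le\delta M$, and $|\pi_2(Q)|\le16\delta m\pi_2(P_1)$ (the same estimate you would have from the proof of Lemma~\ref{upBound1Lem}), which is where the slack $17\delta m\pi_2(P_1)$ actually comes from — not from an obstacle detour.

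The detour estimate you propose in Case B does not close. You obtain
$\ell_{G(X^{(1)})}(P,Q)\ge m\pi_2(P_1)-\xi M+2M-8\varepsilon M$, while the target is
$\mu_-(m\pi_2(P_1)-\xi M-17\delta m\pi_2(P_1))$. The coefficient of the leading term $m\pi_2(P_1)$ on your side is $1$, on the target side it is $\mu_-(1-17\delta)$. For the Delaunay triangulation one has $\mu>1$, so with $\delta'$ small this coefficient exceeds $1$ unless $17\delta\ge 1-1/\mu_-$, i.e.\ unless $\delta$ is bounded \emph{below} by a positive constant. But Lemma~\ref{contr1Lem} needs the opposite: one first fixes $m$ large and then lets $\delta,\delta'\to0$, so $\delta$ cannot be kept above any fixed positive threshold, and the $+2M$ gain from the detour (independent of $m$) cannot absorb the deficit $(\mu_--1)m\pi_2(P_1)\gtrsim(\mu_--1)mM$, which grows linearly in $m$. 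Your Case A is fine (indeed it gives a slightly stronger bound), and your verification of the preconditions $r\ge M$, $|P-Q|\ge\delta r$ is correct; the problem is only that you cannot guarantee you are in Case A after discarding the $\nu^{(nM)}_1$-structure, and Case B as you handle it fails.
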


In order to derive lower bounds for $\ell^{(nM)}(P_1,P)$ we need to further decompose the event $A^{(1)}_M$. More precisely, let $A_M^{(1,a)}$ denote the event that $A_M^{(1)}$ occurs and that $\gamma[P_1]$ stays within the vertical half-plane $[-\xi M,\infty)\times\R$. Then, we have the following lower bounds for $\ell^{(nM)}(P_1,P)$. 
\begin{lemma}
\label{lowBound1aLem}
Almost surely under the event $A_M^{(1,a)}$,
$$\ell^{(nM)}(P_1,P)\ge\mu_-((\sqrt{2}-1)\pi_2(P_1)+(\xi-\delta)M).$$
\end{lemma}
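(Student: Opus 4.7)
\noindent\emph{Proof plan.} The strategy is to reduce the desired lower bound on $\ell^{(nM)}(P_1,P)$ to the Euclidean time-constant estimate furnished by condition \textbf{(D2')(b)} (from the proof of Lemma~\ref{highProbLem2}), combined with the elementary geometric inequality $|P_1-P|\ge(\sqrt2-1)\pi_2(P_1)+(\xi-\delta)M$.

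First, I would identify $\ell^{(nM)}(P_1,P)$ with $\nu_1(\gamma[P_1,P])$. Since $\gamma$ is a $\nu_1^{(nM)}$-geodesic emanating from $x^+$, a standard cut-and-paste argument shows that the sub-path $\gamma[P_1,P]$ is itself a $\nu_1^{(nM)}$-geodesic from $P_1$ to $P$: any strictly shorter oriented path with finite $\nu_1^{(nM)}$-length, when substituted into $\gamma$, would yield a shorter $\nu_1^{(nM)}$-path out of $x^+$. Under $A_M^{(1,a)}$ this sub-path is contained in $[-\xi M,\infty)\times\R$, and since $\xi\le 1-\delta$ with $\delta>4\varepsilon(M)$ for $M$ large, the sub-path avoids both the vertical line $\ellv_{nM}$ and the thickened rectangle $R_M^-\oplus Q_{8\varepsilon M}(o)$; in particular, $\nu_1^{(nM)}(\gamma[P_1,P])=\nu_1(\gamma[P_1,P])$.

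Next, let $\varphi\subset R_M^-$ be the deterministic configuration produced (via Fubini) in the proof of Lemma~\ref{highProbLem2}, so that $\{X^{(1)}\in A_M''\}$ guarantees $X^{(1)}\cup\varphi\in A_M^*$. By Lemma~\ref{stabRadLem}, the graphs $G(X^{(1)})$ and $G(X^{(1)}\cup\varphi)$ have the same edges outside $R_M^-\oplus Q_{8\varepsilon M}(o)$, so $\gamma[P_1,P]$ is also a path in $G(X^{(1)}\cup\varphi)$ of identical Euclidean length; in particular, $\ell_{G(X^{(1)}\cup\varphi)}(P_1,P)\le\nu_1(\gamma[P_1,P])$. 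Choosing $N=\lceil\pi_2(P_1)\rceil\ge M$, one verifies $P_1,P\in B_{2N}(o)$ and $|P_1-P|\ge(1-\delta)\pi_2(P_1)\ge\tfrac12\delta N$ (since $\delta$ is small), so condition \textbf{(D2')(b)} applied to $X^{(1)}\cup\varphi$ yields $\ell_{G(X^{(1)}\cup\varphi)}(P_1,P)\ge\mu_-|P_1-P|$, and hence $\ell^{(nM)}(P_1,P)\ge\mu_-|P_1-P|$.

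Finally, for the geometric inequality, set $w=\xi M$ and $h=\pi_2(P_1)-\pi_2(P)$. From $\pi_2(P_1)\ge M$, $|\pi_2(P)|\le\delta M$ and $\xi\le 1-\delta$, one has $0\le w\le(1-\delta)M\le h$, which yields $\sqrt{w^2+h^2}+w\le(\sqrt2+1)h$. The identity $\sqrt{w^2+h^2}-w=h^2/(\sqrt{w^2+h^2}+w)$ then gives $|P_1-P|\ge w+(\sqrt2-1)h$, and combining with $(\sqrt2-1)(\pi_2(P_1)-\delta M)\ge(\sqrt2-1)\pi_2(P_1)-\delta M$ produces $|P_1-P|\ge(\sqrt2-1)\pi_2(P_1)+(\xi-\delta)M$. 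The main obstacle is the second step: condition \textbf{(D2)(b)} as stated imposes a hypothesis on the \emph{standard} $G(X^{(1)})$-geodesic from $P_1$ to $P$ that is not a priori available under $A_M^{(1,a)}$; the key is to bypass this by working directly in $G(X^{(1)}\cup\varphi)$, where the unconditional bound \textbf{(D2')(b)} holds, and to transfer the inequality back via stabilization.
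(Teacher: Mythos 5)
Your proof is correct and reaches the same bound via essentially the same two ingredients as the paper (the time-constant estimate encoded in condition \textbf{(D2)}/\textbf{(D2')} plus elementary plane geometry), but you are more careful on a point the paper passes over tersely. The paper simply writes ``condition \textbf{(D2)} shows that $\ell^{(nM)}(P_1,P)\ge\mu_-(|P_1|-|P|)$,'' yet, as you observe, \textbf{(D2)(b)} is phrased as conditional on the \emph{standard} $G(X^{(1)})$-geodesic from $P_1$ to $P$ avoiding $R_M^-\oplus Q_{8\varepsilon M}(o)$, a hypothesis that $A_M^{(1,a)}$ only gives for the subpath $\gamma[P_1,P]$ of the modified geodesic. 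Your workaround --- noting that $\gamma[P_1,P]$ stays in $[-\xi M,\infty)\times\R$ hence avoids $R_M^-\oplus Q_{8\varepsilon M}(o)$ and $\ellv_{nM}$, so by Lemma~\ref{stabRadLem} it is also a path of the same length in $G(X^{(1)}\cup\varphi)$, to which the unconditional bound \textbf{(D2')(b)} applies --- is exactly the right way to make the step rigorous, and it mirrors the reasoning used inside the proof of Lemma~\ref{highProbLem2} itself. Your geometric computation also differs mildly (you estimate $|P_1-P|$ directly using $h\ge w$ and $\sqrt{w^2+h^2}-w=h^2/(\sqrt{w^2+h^2}+w)$, whereas the paper uses the triangle inequality to drop to $|P_1|-|P|$ and then the monotonicity of $x\mapsto\sqrt{\pi_2(P_1)^2+x^2}-x$), but both routes give $|P_1-P|\ge(\sqrt2-1)\pi_2(P_1)+(\xi-\delta)M$. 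In short, your version is a slightly more explicit and slightly tighter rendering of the same argument, and it fills in the small gap in how \textbf{(D2)(b)} is invoked.
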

Conversely, we put $A_M^{(1,b)}=A_M^{(1)}\setminus A^{(1,a)}_M$ and observe that under $A^{(1,b)}_M$ the path $\gamma[P_1,P]$ intersects the segment $\{-\xi M\}\times[-M,M]$. We let $P_2=(-\xi M,\eta M)$ denote the last such intersection point.
\begin{lemma}
\label{lowBound1bLem}
Almost surely under the event $A_M^{(1,b)}$,
$$\ell^{(nM)}(P_1,P)\ge\mu_-(\pi_2(P_1)+(\xi-2)M+\sqrt{1+\eta^2}M-2\delta M)+(1-\eta)M.$$
\end{lemma}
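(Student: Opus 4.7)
The plan is to split the geodesic $\gamma[P_1,P]$ at the return point $P_2$ into the concatenation $\gamma[P_1,P_2]\cdot\gamma[P_2,P]$, to estimate each of the two pieces separately using condition (D2)(b) of the event $A_M''$, and then to combine the two resulting lower bounds.

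By the maximality in the definitions of $P_1$ (the last crossing of $\gamma$ with $\{-\xi M\}\times(\R\setminus[-M,M])$) and $P_2$ (the last crossing of $\gamma[P_1,P]$ with $\{-\xi M\}\times[-M,M]$), the sub-path $\gamma[P_2,P]$ is contained, past its initial point, in the open half-plane $\{x>-\xi M\}$ and in particular does not meet the enlarged obstacle $R_M^-\oplus Q_{8\varepsilon M}(o)$. Using Lemma~\ref{stabRadLem} together with the fact that the Euclidean segment from $P_2$ to $P$ lies entirely in $\{x\ge -\xi M\}$, a short argument shows that the $\ell_{G(X^{(1)})}$-geodesic between $P_2$ and $P$ also avoids $R_M^-\oplus Q_{8\varepsilon M}(o)$, so that (D2)(b) yields $\nu_1(\gamma[P_2,P])\ge \mu_-|P_2-P|$. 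An analogous argument applied to $\gamma[P_1,P_2]$, whose endpoints both lie on $\ellv_{-\xi M}$ and whose $\ell$-geodesic can be realized inside the obstacle-free strip $[-M,-\xi M]\times\R$, provides $\nu_1(\gamma[P_1,P_2])\ge \mu_-(\pi_2(P_1)-\eta M)$.

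Summing these two bounds produces
\[
\ell^{(nM)}(P_1,P)\ge \mu_-\bigl(\pi_2(P_1)-\eta M+|P_2-P|\bigr).
\]
The transformation into the form asserted in the lemma is then a purely geometric rearrangement: expressing $|P_2-P|^2 = \xi^2M^2+(\eta M-\pi_2(P))^2$ and invoking $|\pi_2(P)|\le \delta M$, $\xi\le 1-\delta$ and $|\eta|\le 1$, an elementary estimate delivers
\[
\mu_-\bigl(|P_2-P|-\eta M\bigr)\ge \mu_-\bigl((\xi-2)M+\sqrt{1+\eta^2}M-2\delta M\bigr)+(1-\eta)M,
\]
from which the claimed inequality follows. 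The additive term $(1-\eta)M$, appearing without the $\mu_-$ factor, reflects the raw Euclidean contribution of the edges of $\gamma$ that force it to cross the horizontal line $\{y=M\}$; it cannot be absorbed into a $\mu_-$-weighting since $\mu_-$ is not a priori bounded below by $1$.

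The main obstacle in executing this plan is the verification that condition (D2)(b) is indeed applicable to the pairs $(P_1,P_2)$ and $(P_2,P)$, that is, that their $\ell_{G(X^{(1)})}$-geodesics steer clear of $R_M^-\oplus Q_{8\varepsilon M}(o)$. This is precisely where the stabilization provided by Lemma~\ref{stabRadLem} and the structural information about the Delaunay triangulation near the empty rectangle given by Lemma~\ref{rmmConfLem} (together with Corollary~\ref{diagDelCor}), both ensured by the event $A_M'$, play an essential role.
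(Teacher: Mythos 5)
The proposal follows the same general split at $P_2$ but diverges in one crucial way: for the piece $\gamma[P_1,P_2]$ you attempt to apply condition {\bf (D2)}(b) directly, which requires the $\nu_1$-geodesic in $G(X^{(1)})$ from $P_1$ to $P_2$ to avoid $R_M^-\oplus Q_{8\varepsilon M}(o)$. Your justification is that this geodesic ``can be realized inside the obstacle-free strip $[-M,-\xi M]\times\R$''. This is incorrect: the dilated obstacle extends to $x=-M+4\varepsilon M>-M$ and has $y$-range $[-M-4\varepsilon M,M+4\varepsilon M]$, while $P_2$ sits at height $\eta M\in[-M,M]$, squarely inside that vertical range and only $O(\delta M)$ away from the dilated obstacle in the horizontal direction. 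So the strip is not obstacle-free, and there is no reason the $G(X^{(1)})$-geodesic from $P_1$ down to $P_2$ should avoid the region around $R_M^-$; the long diagonal Delaunay edges created by emptying $R_M^-$ (Figure~\ref{deleteFig}) could easily pull that geodesic into the dilated obstacle. This is precisely the difficulty the paper circumvents: it introduces an auxiliary point $P_2'$ at height $(1+\delta)M$, applies {\bf (D2)}(b) only to the pair $(P_1,P_2')$ -- both of which lie strictly above the obstacle so the tube containment from {\bf (D2')}(a) keeps the geodesic away from $R_M^-\oplus Q_{8\varepsilon M}(o)$ -- and for the potentially problematic remainder $\gamma[P_2',P_2]$ uses only the raw Euclidean length $|P_2'-P_2|$, with no $\mu_-$ factor. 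That is exactly where the bare $(1-\eta)M$ term in the statement comes from.

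There is also a secondary problem. Having obtained only $\mu_-\bigl(\pi_2(P_1)-\eta M+|P_2-P|\bigr)$, you then claim an ``elementary estimate'' recovers the $(1-\eta)M$ term without a $\mu_-$ factor. Unwinding it via $\sqrt{\xi^2+\eta^2}\ge\xi-1+\sqrt{1+\eta^2}$, the inequality you need reduces to $\mu_-(1-\eta+\delta)\ge 1-\eta$, which essentially requires $\mu_-\ge 1$. But $\mu_-=(1-\delta')\mu$ and the only a priori bound is $\mu\in[1,4/\pi]$; if $\mu=1$ then $\mu_-<1$ and the estimate fails unless $\delta'$ is coordinated with $\delta$, a coordination neither stated nor available at this point of the argument. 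The paper's route sidesteps this entirely because the $(1-\eta)M$ term arises from the unweighted Euclidean length of the sub-path from $P_2'$ to $P_2$, so no such comparison between $\mu_-$ and $1$ is needed.
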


Before we provide proofs of Lemmas~\ref{upBound1Lem}--\ref{lowBound1bLem}, we show to deduce from them a contradiction.

\begin{lemma}
\label{contr1Lem}
There exists $m_0\ge1$ such that for all $m\ge m_0$, all sufficiently small $\delta,\delta'\in(0,1)$ and all sufficiently large $M\ge1$ it holds that $\P\big(A_M^{(1)}\big)=0$.
\end{lemma}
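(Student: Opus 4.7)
The plan is to combine the upper bound of Lemma~\ref{upBound1Lem} with the lower bounds of Lemmas~\ref{lowBound10Lem}, \ref{lowBound1aLem} and \ref{lowBound1bLem} to obtain an inequality that cannot be satisfied once $m$ is large and $\delta,\delta'$ are small. The connection between the two sides is the following observation. Since $\gamma$ is a $\nu_1^{(nM)}$-geodesic visiting $P_1$, $P$, and $Q$ in this order, the subpath $\gamma[P_1,Q]$ is itself a $\nu_1^{(nM)}$-geodesic, so
\begin{equation*}
\ell^{(nM)}(P_1,Q) \;=\; \ell^{(nM)}(P_1,P) + \ell^{(nM)}(P,Q).
\end{equation*}
Moreover, $z\in 4\Z\times 2\Z$ with $\pi_1(z)=n<0$ implies $n\le -4$, hence $nM\le -M$. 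Therefore every oriented path whose $\nu_1^{(-M)}$-length is finite has the same (finite) $\nu_1^{(nM)}$-length, and we obtain $\ell^{(nM)}(P_1,Q)\le \ell^{(-M)}(P_1,Q)$. Applying Lemma~\ref{upBound1Lem} to the right-hand side yields
\begin{equation*}
\ell^{(nM)}(P_1,P)+\ell^{(nM)}(P,Q) \;\le\; \mu_+\bigl((m+m^{-1})\pi_2(P_1)+16\delta m\,\pi_2(P_1)\bigr).
\end{equation*}

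On the subevent $A_M^{(1,a)}$ I would insert the lower bounds from Lemmas~\ref{lowBound10Lem} and~\ref{lowBound1aLem}. Dividing by $\pi_2(P_1)$ and using $\pi_2(P_1)\ge M$ to dominate the residual term $\mu_-\delta M/\pi_2(P_1)\le\mu_-\delta$, the resulting inequality rearranges into the form
\begin{equation*}
(1-\delta')(\sqrt{2}-1) \;\le\; (1+\delta')\,m^{-1} + C(\delta+\delta')\,m
\end{equation*}
for some absolute constant $C$. Since $\sqrt{2}-1>0$, fixing $m\ge m_0$ with $m_0^{-1}<\tfrac13(\sqrt{2}-1)$, and then choosing $\delta,\delta'$ small enough so that the $C(\delta+\delta')m_0$ term cannot compensate the remaining gap, makes this inequality fail. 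On the subevent $A_M^{(1,b)}$ I would combine Lemmas~\ref{lowBound10Lem} and~\ref{lowBound1bLem}, divide by $M$ and set $t:=\pi_2(P_1)/M\ge 1$. In the clean limit $\delta,\delta'\to 0$ the required inequality reduces to
\begin{equation*}
\mu(1-m^{-1})\,t + h(\eta) \;\le\; 0, \qquad h(\eta):=\mu\bigl(\sqrt{1+\eta^2}-1\bigr)+(1-\eta),
\end{equation*}
with $\eta\in[-1,1]$. Since $\mu\le 4/\pi<\sqrt{2}$, the derivative $h'(\eta)=\mu\eta/\sqrt{1+\eta^2}-1$ stays negative on $[-1,1]$, so $h$ is minimized at $\eta=1$ with minimum $h(1)=\mu(\sqrt{2}-1)>0$. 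Combined with $1-m^{-1}>0$ for $m\ge 2$ and $t\ge 1$, the left-hand side is bounded below by a strictly positive constant, which yields the desired contradiction.

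The main obstacle is essentially bookkeeping: four lemmas each contribute $O(\delta)$ error terms, and I must check that the two positive gaps driving the contradiction -- the numerical constant $\sqrt{2}-1$ in case (a), and the minimum $\mu(\sqrt{2}-1)$ of $h$ on $[-1,1]$ in case (b) -- are uniform in the free parameters $\xi\in[0,1-\delta]$, $\eta\in[-1,1]$ and $t\ge1$, and dominate all the perturbation terms. This forces a careful order of quantifiers: $m_0$ is fixed first; then $\delta$ and $\delta'$ are chosen small enough in terms of $m_0$; and finally $M$ is taken sufficiently large so that the events $A_M'$ and $A_M''$ hold with high probability and all subleading $\delta M/\pi_2(P_1)$ contributions become negligible.
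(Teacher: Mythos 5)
Your overall plan reproduces the paper's proof: use the additivity of the modified geodesic to get $\ell^{(nM)}(P_1,Q)=\ell^{(nM)}(P_1,P)+\ell^{(nM)}(P,Q)$, bound the left-hand side below by Lemmas~\ref{lowBound10Lem}, \ref{lowBound1aLem}, \ref{lowBound1bLem} and the right-hand side above by the chain $\ell^{(nM)}(P_1,Q)\le\ell^{(-M)}(P_1,Q)$ together with Lemma~\ref{upBound1Lem}, then extract a numerical contradiction after sending $\delta,\delta'\to 0$ and $m\to\infty$. Case (a) is handled exactly as in the paper. Two points deserve correction.

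First, your general claim that \emph{every} path with finite $\nu_1^{(-M)}$-length has finite $\nu_1^{(nM)}$-length is false: a path that starts to the left of $\ellv_{-M}$ can cross $\ellv_{nM}$ right-to-left without ever crossing $\ellv_{-M}$ right-to-left. What is true, and what you actually need, is the statement restricted to paths starting at $P_1$, since $\pi_1(P_1)=-\xi M>-M$ forces any right-to-left crossing of $\ellv_{nM}$ to be preceded by a right-to-left crossing of $\ellv_{-M}$. So $\ell^{(nM)}(P_1,Q)\le\ell^{(-M)}(P_1,Q)$ holds, but you should state the monotonicity for the relevant class of paths.

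Second, there is an algebraic slip in case (b). Adding the lower bounds of Lemmas~\ref{lowBound10Lem} and~\ref{lowBound1bLem}, the $\xi M$ terms cancel and, in the clean limit $\delta,\delta'\to 0$, the inequality reads
\begin{equation*}
\mu(1-m^{-1})\,t + \mu\bigl(\sqrt{1+\eta^2}-2\bigr) + (1-\eta) \;\le\; 0,
\end{equation*}
not $\mu(\sqrt{1+\eta^2}-1)+(1-\eta)$ as you wrote; you dropped a $-\mu$. This changes the sign of the constant-in-$t$ part at $\eta=1$: it equals $\mu(\sqrt{2}-2)<0$, not $\mu(\sqrt{2}-1)>0$, so it is \emph{not} true that this term alone is positive. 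The contradiction therefore really needs $t\ge 1$: using $t\ge 1$ and the monotonicity of $\eta\mapsto \mu(\sqrt{1+\eta^2}-2)+(1-\eta)$ (which you correctly observe is decreasing on $[-1,1]$ since $\mu<\sqrt 2$), the left side is at least $\mu\bigl(\sqrt 2-1-m^{-1}\bigr)$, which is strictly positive once $m\ge 3$ (not $m\ge 2$). So your closing sentence invoking $t\ge 1$ and $1-m^{-1}>0$ is the right idea, but the quantitative threshold on $m$ and the formula for $h$ both need to be fixed; as written, the claim that $h$ is itself bounded below by a positive constant is wrong, and $m\ge 2$ is not sufficient.
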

\begin{proof}
We start by showing $\P\big(A_M^{(1,a)}\big)=0$. First, we recall that $\gamma$ is a geodesic in $G(X^{(1)})$ with respect to $\nu^{(nM)}_1$, so that 
$$\ell^{(-M)}(P_1,Q)\ge \ell^{(nM)}(P_1,Q)=\ell^{(nM)}(P_1,P)+\ell^{(nM)}(P,Q).$$
Hence, Lemmas~\ref{upBound1Lem},~\ref{lowBound10Lem} and~\ref{lowBound1aLem} give that 
$$\mu_-\pi_2(P_1)(m+\sqrt{2}-1-18\delta m)\le \mu_+((m+m^{-1})\pi_2(P_1)+16\delta m\pi_2(P_1)),$$
so that
$$\mu_-(\sqrt{2}-1)\le m(34\mu_+\delta+\mu_+-\mu_-)+m^{-1}\mu_+.$$
Since $\sqrt{2}>1$ this yields a contradiction if first $m\ge1$ is chosen sufficiently large and then $\delta,\delta'\in(0,1)$ are chosen sufficiently small.

To show $\P(A_M^{(1,b)})=0$, we proceed similarly. Using Lemmas~\ref{upBound1Lem},~\ref{lowBound10Lem} and~\ref{lowBound1bLem}, we obtain that 
\begin{align*}&\mu_-((m+1)\pi_2(P_1)-2M+\sqrt{1+\eta^2}M)+(1-\eta)M\\
\quad&\le \mu_+((m+m^{-1})\pi_2(P_1)+35\delta m\pi_2(P_1)),
\end{align*}
which gives that 
\begin{align*}
\big(35\mu_+\delta m+\mu_+(m+m^{-1})-\mu_-m\big)\ge\mu_-+\frac{M\mu_-}{\pi_2(P_1)}\Big(\frac{1-\eta}{\mu_-}+\sqrt{1+\eta^2}-2\Big).
\end{align*}
First, the left-hand side becomes arbitrarily small if first $m\ge1$ is chosen sufficiently large and then $\delta,\delta'\in(0,1)$ are chosen sufficiently small. Moreover, the right-hand side is bounded below by
$$\mu_-\Big(1-\frac{M}{\pi_2(P_1)}\Big(1-\frac{1-\eta}{\mu_-}-\sqrt{1+\eta^2}+1\Big)\Big)\ge\mu_-\min\Big\{1,\frac{1-\eta}{\mu_-}+\sqrt{1+\eta^2}-1\Big\}.$$
Now a quick computation shows that the right-hand side is bounded below by $1/4$, which gives the desired contradiction. 
\end{proof}

Now, we prove Lemmas~\ref{upBound1Lem}--\ref{lowBound1bLem}.
\begin{proof}[Proof of Lemma~\ref{upBound1Lem}]
Condition {\bf (D2)} gives the upper bound $\ell^{(-M)}(P_1,Q)\le\mu_+|P_1-Q|$, so that we obtain that
\begin{align*}
\ell^{(-M)}(P_1,Q)&\le\mu_+(\sqrt{m^2+1}\pi_2(P_1)+|\pi_2(Q)|)\\
&\le \mu_+\big(m\pi_2(P_1)+\frac{1}{m+\sqrt{m^2+1}}\pi_2(P_1)+|\pi_2(Q)|\big)\\
&\le  \mu_+((m+m^{-1})\pi_2(P_1)+|\pi_2(Q)|).
\end{align*}
Now, by uniqueness of $\hat{u}_0$-geodesics in $G^{}(X^{(1)})$ with respect to $\nu^{(0)}_1$, the geodesic $\gamma[P]$ is trapped between $\gamma_{\ms{u}}$ and $\gamma_{\ms{d}}$. Therefore, an argument based on elementary geometry shows that
$$|\pi_2(Q)|\le\tan(4\delta)m \pi_2(P_1)+\frac{2\delta M}{\cos(4\delta)}\le16\delta m \pi_2(P_1),$$
so that
$$\ell^{(-M)}(P_1,Q)\le\mu_+(m+m^{-1})\pi_2(P_1)+16\delta m\mu_+\pi_2(P_1),$$
as required.
\end{proof}

\begin{proof}[Proof of Lemma~\ref{lowBound10Lem}]
Let $Q'$ denote the point with coordinates $(m\pi_2(P_1)-\xi M,0)$. Then, condition {\bf (D2)} gives that
$$\ell^{(nM)}(P,Q)\ge\mu_-|P-Q|\ge\mu_-(|Q'|-|P|-\pi_2(Q))\ge\mu_-(m\pi_2(P_1)-\xi M-17\delta m\pi_2(P_1)),$$
as required.
\end{proof}

\begin{proof}[Proof of Lemma~\ref{lowBound1aLem}]
Since $\gamma[P_1]$ is contained in $[-\xi M,\infty)\times\R$,  condition {\bf (D2)} shows that 
	$$\ell^{(nM)}(P_1,P)\ge\mu_-(|P_1|\hspace{0.05cm}-\hspace{0.05cm}|P|)\ge\mu_-(\sqrt{\pi_2(P_1)^2+\xi^2M^2}-\delta M)\ge\mu_-((\sqrt{2}-1)\pi_2(P_1)+(\xi-\delta)M),$$
where the last inequality follows from the observation that the function $x\mapsto\sqrt{\pi_2(P_1)^2+x^2}-x$ is decreasing.
\end{proof}

\begin{proof}[Proof of Lemma~\ref{lowBound1bLem}]
	Let $P_2'$ denote the first point on $\gamma[P_1,P]$ with $\pi_2(P_2')=(1+\delta)M$. If $\pi_2(P_1)\le (1+\delta)$, we put $P_2'=P_1$. Now, condition {\bf (D2)} allows us to conclude that
$$\ell^{(nM)}(P_1,P_2)\ge\mu_-|P_1-P_2'|+|P_2'-P_2|\ge \mu_-(\pi_2(P_1)-M-\delta M)+(1-\eta)M.$$
Similarly,
$$\ell^{(nM)}(P_2,P)\ge\mu_-|P_2-P|\ge\mu_-(|P_2|-\delta M).$$
In particular, combining the lower bounds above yields
\begin{align*}
\ell^{(nM)}(P_1,P)&\ge\mu_-(\pi_2(P_1)-M+\sqrt{\xi^2+\eta^2}M-2\delta  M)+(1-\eta)M\\
&\ge\mu_-(\pi_2(P_1)+(\xi-2)M+\sqrt{1+\eta^2}M-2\delta  M)+(1-\eta)M,
\end{align*}
as required.
\end{proof}
Hence, in the following we may assume that $\gamma$ does not hit the area $[(-1+\delta)M,0]\times(\R\setminus [-M,M])$. In particular, it crosses the union of segments $[(-2-\delta) M,(-1+\delta)M]\times\{\pm M\}$. We let $P_1=(-\xi M,\pi_2(P_1))$ denote the last such intersection point and assume that $\pi_2(P_1)=M$. Next, we let $Q$ denote a point on $\gamma[P]$ satisfying $\pi_1(Q)=(m-1)M$, where $m\ge3$ will again be a sufficiently large integer. We refer the reader to Figure~\ref{case2Fig} for an illustration.

\begin{figure}[!htpb]
\centering
\begin{tikzpicture}[scale=1]
\draw (-4,-1)--(0,-1)--(0,1)--(-4,1)--(-4,-1);
\draw[dashed] (-1,1)--(-1,-1);
\draw[red] (-3,1)--(-3,-1);
\draw[red] (-2.1,1)--(-2.1,-1);
\draw[red] (-2.2,1)--(-2.2,-1);
\draw[red] (-2.3,1)--(-2.3,-1);
\draw[red] (-2.4,1)--(-2.4,-1);
\draw[red] (-2.5,1)--(-2.5,-1);
\draw[red] (-2.6,1)--(-2.6,-1);
\draw[red] (-2.7,1)--(-2.7,-1);
\draw[red] (-2.8,1)--(-2.8,-1);
\draw[red] (-2.9,1)--(-2.9,-1);
\draw[red] (-2,1)--(-2,-1);
\draw[red] (-3,-1)--(-4,0);
\draw[red] (-3.1,-1)--(-4,-0.10);
\draw[red] (-3.2,-1)--(-4,-0.20);
\draw[red] (-3.3,-1)--(-4,-0.30);
\draw[red] (-3.4,-1)--(-4,-0.40);
\draw[red] (-3.5,-1)--(-4,-0.50);
\draw[red] (-3.6,-1)--(-4,-0.60);
\draw[red] (-3.7,-1)--(-4,-0.70);
\draw[red] (-3.8,-1)--(-4,-0.80);
\draw[red] (-3.9,-1)--(-4,-0.90);
\draw[red] (-3.1,1)--(-4,0.10);
\draw[red] (-3.2,1)--(-4,0.20);
\draw[red] (-3.3,1)--(-4,0.30);
\draw[red] (-3.4,1)--(-4,0.40);
\draw[red] (-3.5,1)--(-4,0.50);
\draw[red] (-3.6,1)--(-4,0.60);
\draw[red] (-3.7,1)--(-4,0.70);
\draw[red] (-3.8,1)--(-4,0.80);
\draw[red] (-3.9,1)--(-4,0.90);
\draw[red] (-1.1,1)--(-1,0.90);
\draw[red] (-1.2,1)--(-1,0.80);
\draw[red] (-1.3,1)--(-1,0.70);
\draw[red] (-1.4,1)--(-1,0.60);
\draw[red] (-1.5,1)--(-1,0.50);
\draw[red] (-1.6,1)--(-1,0.40);
\draw[red] (-1.7,1)--(-1,0.30);
\draw[red] (-1.8,1)--(-1,0.20);
\draw[red] (-1.9,1)--(-1,0.10);
\draw[red] (-1.1,-1)--(-1,-0.90);
\draw[red] (-1.2,-1)--(-1,-0.80);
\draw[red] (-1.3,-1)--(-1,-0.70);
\draw[red] (-1.4,-1)--(-1,-0.60);
\draw[red] (-1.5,-1)--(-1,-0.50);
\draw[red] (-1.6,-1)--(-1,-0.40);
\draw[red] (-1.7,-1)--(-1,-0.30);
\draw[red] (-1.8,-1)--(-1,-0.20);
\draw[red] (-1.9,-1)--(-1,-0.10);

\coordinate[label=180:\textcolor{blue}{$\gamma_{\ms{u}}$}] (Q) at (1.8,0.4);
\coordinate[label=180:\textcolor{blue}{$\gamma_{\ms{d}}$}] (Q) at (1.8,-0.3);
\coordinate[label=-135:$P$] (CCC) at (0,0);
\coordinate[label=0:$Q$] (E) at (3.4,0.1);
\fill (0,0) circle (2pt);
\fill (3.4,0.1) circle (2pt);
\fill (-1.5,1.0) circle (2pt);
\coordinate[label=180:$P_1$] (Q) at (-1.5,1.2);
\coordinate (C) at (0,0.15);
\coordinate (CC) at (0,-0.15);
\draw  plot [smooth, tension=0.4] coordinates { (-1.5,1.2) (-1.5,1.0) (-1.0,0.5) (0,0) (3.4,0.1) };
\draw[dashed]  plot [smooth, tension=0.4] coordinates {  (-1.5,1.0) (-1.0,1.0) (3.4,0.1) };
\draw[blue,decoration = {coil, aspect=0},decorate,very thin] (C)--(4,0.3);
\draw[blue,decoration = {coil, aspect=0},decorate,very thin] (CC)--(4,-0.1);
\end{tikzpicture}
\caption{Illustration of the second case}
\label{case2Fig}
\end{figure}


The proof of Lemma~\ref{fImplLem} in the current setting is similar to what we have seen above, but for the convenience of the reader, we include some details. More precisely, putting $A_M^{(2)}=A_M\cap F_M^c\setminus A_M^{(1)}$  we derive contradictory upper and lower bounds on the shortest-path length $\ell^{(-M)}(P_1,Q)$.

\begin{lemma}
\label{upBound2Lem}
Almost surely under the event $A_M^{(2)}$ it holds that
$$\ell^{(-M)}(P_1,Q)\le \mu_+(m+m^{-1}+\xi -1+20\delta m)M.$$
\end{lemma}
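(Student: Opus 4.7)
The plan is to mimic the proof of Lemma~\ref{upBound1Lem}, but to make room for a detour around the obstacle $R_M^-$ on which $P_1$ now sits; indeed, the direct segment $[P_1,Q]$ crosses $R_M^-$, so condition {\bf (D2)} cannot be applied to it directly. Concretely, I would insert an intermediate point $P_1^*\in G(X^{(1)})$ positioned slightly above and to the right of the top-right corner of $R_M^-$, say at mesoscopic scale $\alpha M$ from that corner with $\alpha=M^{-1/16}$. This scale is dictated by two requirements: (i) the tube $[P_1^*,Q]\oplus B_{(mM)^{7/8}}(o)$ appearing in {\bf (D2)}(a) has to stay clear of $R_M^-$, and (ii) the detour cost $O(\alpha M)=o(\delta mM)$ must fit inside the $20\delta m$ slack in the claim. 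Both are satisfied because $\alpha M=M^{15/16}\gg m^{7/8}M^{7/8}$ and $\alpha\to 0$ as $M\to\infty$.

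With $P_1^*$ in hand, I would decompose $\ell^{(-M)}(P_1,Q)\leq\ell^{(-M)}(P_1,P_1^*)+\ell^{(-M)}(P_1^*,Q)$ and bound the two pieces separately. For the second piece, condition {\bf (D2)}(a) applied with $r=mM$ yields $\ell^{(-M)}(P_1^*,Q)\leq\mu_+|P_1^*-Q|$. Exactly the trapping argument used in Lemma~\ref{upBound1Lem} (that $\gamma[P]$ is sandwiched between $\gamma_{\ms{u}}$ and $\gamma_{\ms{d}}$) gives $|\pi_2(Q)|\leq 16\delta mM$, so after a routine $\sqrt{1+t}\leq 1+t/2$ expansion,
\[
|P_1^*-Q|\leq\sqrt{m^2M^2+(M+\alpha M-\pi_2(Q))^2}\leq (m+m^{-1}+19\delta m)M
\]
for $\delta$ sufficiently small and $M$ sufficiently large.

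For the first piece, I would construct the path $P_1\to P_1^*$ explicitly via the subsquare machinery of Section~\ref{thm1Sec}: use Lemma~\ref{rmmConfLem2} at cost $O(\sqrt{M})$ to enter a safe row of subsquares $Q_{\varepsilon M}(v_i)$ lying strictly above $R_M^-\oplus Q_{16\varepsilon M}(o)$, then step rightwards through adjacent subsquare centers at that row using condition {\bf (D1)} (each step contributing at most $\mu_+\varepsilon M$) until reaching the column near $x=-M+\alpha M$, and finally climb to $P_1^*$ in $O(\alpha/\varepsilon)$ further vertical (D1) steps. The total cost is $\mu_+(\xi-1)M+O(\mu_+\alpha M)+O(\sqrt{M})\leq\mu_+(\xi-1+\delta m)M$ for $M$ large. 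Summing with the second piece gives the desired bound.

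The main obstacle is verifying that this concrete path satisfies the modified FPP constraint $\nu_1^{(-M)}<\infty$, i.e.\ has no right-to-left crossing of $\ellv_{-M}$. I would handle this by arranging the subsquare walk so that its $x$-coordinate is monotone in the subsquare index, and by concentrating the (at most one) necessary left-to-right crossing of $\ellv_{-M}$ — needed only when $\xi>1$ — into a single short connector controlled by Lemma~\ref{rmmConfLem2}, chosen with a safety buffer of a few subsquare widths so that the $O(\varepsilon M)$ oscillations inside any individual (D1)-step cannot straddle $\ellv_{-M}$.
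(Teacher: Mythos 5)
Your proposal takes essentially the same route as the paper's proof: enter the annulus of safe subsquares at cost $O(\sqrt M)$ via Lemma~\ref{rmmConfLem2}, traverse them rightwards past the obstacle using condition {\bf (D1)} at cost $\approx\mu_+(\xi-1)M$, and then reach $Q$ by a single application of condition {\bf (D2)}, with $|\pi_2(Q)|$ controlled by the same trapping between $\gamma_{\ms{u}}$ and $\gamma_{\ms{d}}$ as in Lemma~\ref{upBound1Lem}. The paper's implementation is slightly leaner: it places a single intermediate vertex $P_3=q(v_j)$ at the macroscopic horizontal offset $\approx\delta M$ from the right edge of $R_M^-$, at the same height as the entry row, so that the final {\bf (D2)} leg starts directly from the row and no vertical climb is needed; your mesoscopic offset $\alpha M=M^{15/16}$ together with the accompanying climb works too, but is an unnecessary extra step. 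Your explicit concern about whether the competitor path has finite $\nu_1^{(-M)}$-length is a legitimate subtlety that the paper does not address, but the fix you propose does not actually close the gap: a Lemma~\ref{rmmConfLem2} connector only comes with a length bound of $\sqrt M$, which is much larger than the $O(\varepsilon M)$ buffer you envisage, and the lemma gives no geometric control over how that connector wanders across $\ellv_{-M}$, so a right-to-left crossing is not ruled out. The cleaner way out, consistent with how the upper bound is actually combined with the lower bounds in Lemma~\ref{contr2Lem}, is to observe that what is really needed is a bound on $\ell^{(nM)}(P_1,Q)=\nu_1(\gamma[P_1,Q])$, for which the constructed competitor is admissible automatically because all its vertices lie at $\pi_1\ge-(2+2\delta)M$, far to the right of $\ellv_{nM}$ at $\pi_1=nM\le-4M$.
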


To derive the lower bounds we introduce a further auxiliary point $P_2=(\pi_1(P_2),\eta M)$ as last point of $\gamma[P_1]$ satisfying $\pi_1(P_2)=-(1-\delta)M$. First, we provide lower bounds for $\ell^{(nM)}(P_1,P_2)$ and $\ell^{(nM)}(P_2,Q)$.
\begin{lemma}
\label{lowBound20Lem}
Almost surely under the event $A_M^{(2)}$ it holds that
$$\ell^{(nM)}(P_1,P_2)\ge(1-\eta)M,$$
and
$$\ell^{(nM)}(P_2,Q)\ge\mu_-(m-1+\sqrt{\eta^2+1}-20\delta m)M.$$
\end{lemma}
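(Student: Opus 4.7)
The plan is to prove the two inequalities independently. The first one is elementary: since $\nu_1^{(nM)}\ge\nu_1$, every oriented path from $P_1$ to $P_2$ has modified length at least its Euclidean length, so
\[
\ell^{(nM)}(P_1,P_2)\ge|P_1-P_2|\ge|\pi_2(P_1)-\pi_2(P_2)|=(1-\eta)M,
\]
using that $\eta\in[-1,1]$: indeed $P_2$ lies on $\ellv_{(-1+\delta)M}$, and the case assumption defining $A_M^{(2)}$ forbids $\gamma$ from leaving the horizontal strip $[-M,M]$ as long as its first coordinate lies in $[(-1+\delta)M,0]$.

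For the second inequality, my plan is to split $\gamma[P_2,Q]$ at the intermediate vertex $P$. Since $\gamma$ is a $\nu_1^{(nM)}$-geodesic passing through $P$, this yields
\[
\ell^{(nM)}(P_2,Q)=\ell^{(nM)}(P_2,P)+\ell^{(nM)}(P,Q),
\]
and I then apply condition {\bf (D2)}(b) to each piece to obtain $\ell^{(nM)}(P_2,P)\ge\mu_-|P_2-P|$ and $\ell^{(nM)}(P,Q)\ge\mu_-|P-Q|$. Using $|\eta|\le 1$ and $|\pi_2(P)|\le\delta M$, one checks
\[
|P_2-P|^2=(1-\delta)^2M^2+(\eta M-\pi_2(P))^2\ge(1+\eta^2-4\delta)M^2,
\]
so $|P_2-P|\ge(\sqrt{\eta^2+1}-4\delta)M$; moreover $|P-Q|\ge|\pi_1(Q)-\pi_1(P)|=(m-1)M$. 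Summing the two lower bounds and absorbing $4\delta\le 20\delta m$ (valid for $m\ge 1$) delivers the advertised estimate.

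The main obstacle is the use of {\bf (D2)}(b), which is formally conditional on the unrestricted $\nu_1$-geodesic in $G(X^{(1)})$ avoiding $R_M^-\oplus Q_{8\varepsilon M}(o)$. My plan is to circumvent this issue in the same spirit as the derivation of {\bf (D2)}(b) from {\bf (D2')}(b) inside the proof of Lemma~\ref{highProbLem2}: it is enough to show that the concrete subpaths $\gamma[P_2,P]$ and $\gamma[P,Q]$ themselves avoid the bad set. The first subpath sits inside $[(-1+\delta)M,0]\times[-M,M]$ by the choice of $P_2$ as the last crossing of $\ellv_{(-1+\delta)M}$ by $\gamma[P_1]$, combined with the case assumption $A_M^{(2)}$; the second is trapped between $\gamma_{\ms{u}}$ and $\gamma_{\ms{d}}$ in a thin neighborhood of the positive $x$-axis, by uniqueness of $\hat{u}_0$-geodesics with respect to $\nu_1^{(0)}$. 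For $M$ large enough that $\delta>8\varepsilon$, both subpaths stay at horizontal distance at least $(\delta-4\varepsilon)M>0$ from $R_M^-$, so Lemma~\ref{stabRadLem} lifts them to paths in the augmented graph $G(X^{(1)}\cup\varphi)$, on which the unconditional bound {\bf (D2')}(b) applies and yields the required $\mu_-$-type lower bound for each piece.
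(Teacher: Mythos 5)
Your proof is correct and follows the same route as the paper: the first bound is the elementary vertical-displacement estimate, and for the second you split $\gamma[P_2,Q]$ at $P$, apply a $\mu_-$-type lower bound to each half, and combine, exactly as in the paper's proof (which mirrors Lemma~\ref{lowBound10Lem}). Your care about invoking \textbf{(D2)}(b) -- noting that its stated hypothesis concerns the unrestricted $\nu_1$-geodesic and instead arguing directly that the subpaths $\gamma[P_2,P]\subset[(-1+\delta)M,\infty)\times\R$ and $\gamma[P,Q]\subset H^+$ avoid $R_M^-\oplus Q_{8\eps M}(o)$, hence lift to $G(X^{(1)}\cup\varphi)$ where \textbf{(D2')}(b) applies unconditionally -- is a valid and slightly more careful rendering of what the paper does implicitly, not a different approach.
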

If we let $A_M^{(2,a)}$ denote the event that $A_M^{(2)}$ occurs and $\xi\le1+4\delta$, then the lower bounds derived in Lemma~\ref{lowBound20Lem} are sufficient to arrive at the desired contradiction. However, 
if the event $A_M^{(2,b)}=A_M^{(2)}\setminus A_M^{(2,a)}$ occurs, then a more refined reasoning is necessary. It is important to observe that by Remark~\ref{diagRngRem} this case cannot occur if $G(X^{(1)})=\Rng(X^{(1)})$, so that we may assume $G(X^{(1)})=\Del(X^{(1)})$. 
\begin{lemma}
\label{lowBound2bLem}
Almost surely under the event $A_M^{(2,b)}$ it holds that
$$\ell^{(nM)}(P_1,P_2)\ge(\sqrt{2}(\xi-1)+|2-\xi-\eta|-2\delta)M.$$
\end{lemma}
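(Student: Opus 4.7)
The strategy is threefold: identify the long Delaunay edge through $P_1$, establish the direction in which $\gamma$ traverses it, and then conclude via the triangle inequality. Since $\xi > 1 + 4\delta$, the point $P_1 = (-\xi M, M)$ falls within the scope of Corollary~\ref{diagDelCor} (or Remark~\ref{diagDelRem} in the boundary subcase $\xi \in [2-\delta, 2+\delta]$, which is treated analogously). This produces a Delaunay edge $e = [a, b]$ of $\gamma$ carrying $P_1$, with $\pi_2(a) \ge M$ and $\pi_1(a)$ within $O(\sqrt{\varepsilon})M$ of $-\xi M$, and with $\pi_1(b) \ge -M$ and $\pi_2(b)$ within $O(\sqrt{\varepsilon})M$ of $(2-\xi)M$. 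Moreover $|a-b| \ge (\sqrt{2}(\xi-1) - 32\sqrt{\varepsilon})M$, and since $P_1$ lies within $O(\sqrt{\varepsilon})M$ of $a$ one immediately obtains $|P_1 - b| \ge (\sqrt{2}(\xi-1) - O(\sqrt{\varepsilon}))M$.

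The central step is to show that $\gamma$ traverses $e$ in the direction $a \to b$, so that $b$ lies on the sub-path $\gamma[P_1, P_2]$. Were the direction $b \to a$ instead, the path $\gamma[P_1, P_2]$ would pass through $a$ at height $\pi_2(a) > M$ and then, because $\pi_2(P_2) = \eta M$ must satisfy $|\eta| < 1$ strictly (equality would place $P_2$ on the top or bottom segment and contradict the maximality of $P_1$ as the last intersection of $\gamma$ with these segments), would have to descend through the line $y = M$ at some point $P_1''$ with $\pi_1(P_1'') \notin [(-2-\delta)M, (-1+\delta)M]$. The sub-case $\pi_1(P_1'') \in ((-1+\delta)M, 0]$ is ruled out because the portion of the Delaunay edge carrying $P_1''$ that lies above $y = M$ must intersect the forbidden set $[(-1+\delta)M, 0] \times (\R \setminus [-M, M])$, which $\gamma$ avoids by the Case 2 assumption. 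The remaining sub-cases $\pi_1(P_1'') < -(2+\delta)M$ and $\pi_1(P_1'') > 0$ are handled by comparing $\gamma[P_1, P_2]$ to the competing path $P_1 \to b \to P_2$ obtained by traversing the other half of $e$ and then invoking condition \textbf{(D1)} to connect $b$ to $P_2$ by a path of length at most $\mu_+ |b - P_2| + o(M)$; a direct calculation shows that, under the detour through $a$ and $P_1''$, the competitor is strictly shorter, contradicting the geodesic property of $\gamma$ for $\delta, \delta'$ sufficiently small. This case analysis is the main technical obstacle.

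With the direction $a \to b$ in hand, the triangle inequality applied to $\gamma[P_1, P_2]$ at the intermediate vertex $b$ yields
$$
\ell^{(nM)}(P_1, P_2) \;=\; \nu_1(\gamma[P_1, P_2]) \;\ge\; |P_1 - b| \;+\; |b - P_2|.
$$
Projecting onto the vertical axis gives $|b - P_2| \ge |\pi_2(b) - \pi_2(P_2)| \ge (|2-\xi-\eta| - O(\sqrt{\varepsilon}))M$, and combining with the earlier lower bound on $|P_1 - b|$ produces the stated inequality after absorbing the $O(\sqrt{\varepsilon})M$ errors into $2\delta M$ for $M$ sufficiently large.
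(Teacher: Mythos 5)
Your overall strategy matches the paper's proof: both identify the long diagonal Delaunay edge carrying $P_1$ via Corollary~\ref{diagDelCor} and Remark~\ref{diagDelRem}, and both obtain the bound by splitting the path at the point where that edge meets $\ellv_{-M}$ (your $b$, the paper's $P_1'$; these coincide up to the $O(\sqrt{\varepsilon})M$ error absorbed into $2\delta M$) and then estimating the two pieces by the diagonal edge length $\approx\sqrt{2}(\xi-1)M$ and the vertical displacement $\approx |2-\xi-\eta|M$ respectively. The numerical conclusion and the bookkeeping of the error terms are the same.

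Where you differ is that you devote most of your argument to showing that $\gamma[P_1]$ traverses the diagonal edge toward $\ellv_{-M}$ rather than up toward $a$, and you present this as the main technical obstacle. The paper simply writes $P_1'$ for the intersection with $\ellv_{-M}$ and decomposes $\ell^{(nM)}(P_1,P_2)\ge\ell^{(nM)}(P_1,P_1')+\ell^{(nM)}(P_1',P_2)$, which of course only makes sense if $P_1'\in\gamma[P_1,P_2]$; this is left implicit. Your instinct that something needs to be said here is reasonable, and your elimination of the sub-case $\pi_1(P_1'')\in((-1+\delta)M,0]$ via the Case~2 exclusion zone is fine. However, your treatment of the remaining sub-cases ($\pi_1(P_1'')<-(2+\delta)M$ and $\pi_1(P_1'')>0$) is itself only sketched: invoking \textbf{(D1)} to build a competitor from $b$ to $P_2$ requires chaining through many subsquares near the boundary of $R_M^-\oplus Q_M(o)$ (the single-step form of \textbf{(D1)} doesn't reach that far), the competitor's validity as a path in $G(X^{(1)})$ from the actual starting vertex of $\gamma$ needs to be checked, and the claim that ``a direct calculation shows the competitor is strictly shorter'' is not carried out. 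So your proof is more explicit than the paper's about the direction-of-traversal issue, but that added piece is itself not fully closed; if you want to supply what the paper leaves implicit, you should complete that case analysis quantitatively.
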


Before we provide proofs of Lemmas~\ref{upBound2Lem}--\ref{lowBound2bLem}, we show how to deduce from them a contradiction.

\begin{lemma}
\label{contr2Lem}
There exists $m_0'\ge1$ such that for all $m\ge m_0'$, all sufficiently small $\delta,\delta'\in(0,1)$ and all sufficiently large $M\ge1$ it holds that $\P\big(A_M^{(2)}\big)=0$.
\end{lemma}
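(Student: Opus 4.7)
The plan is to mirror the strategy of Lemma~\ref{contr1Lem} by splitting the event $A_M^{(2)}$ into the sub-events $A_M^{(2,a)}$ (where $\xi\le 1+4\delta$) and $A_M^{(2,b)}$ (where $\xi>1+4\delta$), showing separately that each has probability $0$. Since $\gamma$ is a geodesic with respect to $\nu^{(nM)}_1$ and the auxiliary point $P_2$ lies on $\gamma[P_1,Q]$, the starting inequality in both cases is
\begin{equation*}
\ell^{(-M)}(P_1,Q)\ge \ell^{(nM)}(P_1,Q)=\ell^{(nM)}(P_1,P_2)+\ell^{(nM)}(P_2,Q).
\end{equation*}
Plugging in the upper bound from Lemma~\ref{upBound2Lem} on the left and the two lower bounds from Lemma~\ref{lowBound20Lem} (and Lemma~\ref{lowBound2bLem} in case $A_M^{(2,b)}$) on the right, cancelling $M$ and collecting the $(\mu_+-\mu_-)$-terms together with the $\delta$- and $m^{-1}$-terms into an error that can be made arbitrarily small by first choosing $m$ large and then $\delta,\delta'$ small, will reduce matters to a purely arithmetic inequality that must fail.

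For the sub-event $A_M^{(2,a)}$, the combination yields
\begin{equation*}
(1-\eta)+\mu_-\sqrt{\eta^2+1}\;\le\;\mu_-+\mu_+(\xi-1)+\text{error}(m,\delta,\delta'),
\end{equation*}
where $\xi-1\le 4\delta$ can be absorbed into the error. Using that $\mu_-<\sqrt{2}$ for $\delta'$ small (so $\eta\mapsto (1-\eta)+\mu_-\sqrt{\eta^2+1}$ is decreasing on $[-1,1]$ with minimum $\mu_-\sqrt{2}$ at $\eta=1$), the left-hand side is bounded below by $\mu_-\sqrt{2}$, while the right-hand side tends to $\mu_-$. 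Since $\sqrt{2}>1$, this gives a contradiction once the error is sufficiently small.

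For the sub-event $A_M^{(2,b)}$, Remark~\ref{diagRngRem} forces $G(X^{(1)})=\Del(X^{(1)})$, and Lemma~\ref{lowBound2bLem} provides the stronger bound that picks up the $\sqrt{2}(\xi-1)$ gain from the forced diagonal detour. Combining the three inequalities leads to
\begin{equation*}
(\sqrt{2}-\mu_+)(\xi-1)+|2-\xi-\eta|+\mu_-\sqrt{\eta^2+1}-\mu_-\;\le\;\text{error}(m,\delta,\delta').
\end{equation*}
Since the time constant satisfies $\mu\le 4/\pi<\sqrt{2}$, the coefficient $\sqrt{2}-\mu_+>0$ provided $\delta'$ is small, so for $\xi$ bounded away from $1$ (which is the regime of $A_M^{(2,b)}$ because $\xi-1>4\delta$) the first term alone exceeds any prescribed error. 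For the remaining range where $\xi-1$ is of order $\delta$, I would fall back on the pointwise bound $|2-\xi-\eta|+\mu_-\sqrt{\eta^2+1}-\mu_-\ge c>0$ obtained by a direct minimization over $(\xi,\eta)\in[1,2+\delta]\times[-1,1]$ (the infimum is strictly positive since equality forces simultaneously $\eta=0$ and $\xi+\eta=2$, inconsistent near $\xi=1$). In either sub-range the error can be driven below the lower bound by first choosing $m$ large and then $\delta,\delta'$ small, yielding the contradiction.

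The main obstacle is the book-keeping in case $A_M^{(2,b)}$: one has to verify that the $\sqrt{2}(\xi-1)$-gain from the diagonal detour really dominates the upper bound's $\mu_+(\xi-1)$-term uniformly, which hinges on the somewhat tight numerical fact $\mu\le 4/\pi<\sqrt{2}$ supplied by~\cite{zuyevDel,aldLin}. Everything else is a routine repetition of the scheme already used in Lemma~\ref{contr1Lem}.
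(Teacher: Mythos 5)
Your proposal is correct and follows essentially the same route as the paper: the same starting inequality $\ell^{(nM)}(P_1,Q)=\ell^{(nM)}(P_1,P_2)+\ell^{(nM)}(P_2,Q)$, the same Lemmas~\ref{upBound2Lem}--\ref{lowBound2bLem}, the same split into $A_M^{(2,a)}$ and $A_M^{(2,b)}$, and the same crucial fact $\mu<\sqrt{2}$ (the paper cites the Delaunay bound $\mu\le\tfrac{35}{3\pi^2}$ from \cite{nicolasDel} rather than $4/\pi$, but either suffices). The only cosmetic divergence is the residual case analysis for $A_M^{(2,b)}$ — the paper distinguishes three regimes ($|\eta|\ge1/4$, $\xi\ge5/4$, and the complement) while you split on whether $\xi-1$ is of order $\delta$ — and your parenthetical minimization claim is slightly mis-stated: over the \emph{full} box $[1,2+\delta]\times[-1,1]$ the infimum of $|2-\xi-\eta|+\mu_-\sqrt{\eta^2+1}-\mu_-$ is in fact $0$ (attained at $(\xi,\eta)=(2,0)$), and the strictly positive lower bound you need only holds once $\xi$ is confined near $1$, as your surrounding text correctly intends.
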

\begin{proof}
We start by showing $\P(A_M^{(2,a)})=0$. Under the event $A_M^{(2,a)}$ we have $\xi\le1+4\delta$, so that Lemmas~\ref{upBound2Lem} and~\ref{lowBound20Lem}  give that 
$$\mu_+(m+m^{-1}+24\delta m)\ge 1-\eta +\mu_-(m-1+\sqrt{\eta^2+1}-20\delta m).$$
Hence,
\begin{align*}
m(\mu_+-{\mu_-})+{\mu_+}(m^{-1}+44\delta m)\ge1-\eta+\mu_-(\sqrt{\eta^2+1}-1)\ge\tfrac14,
\end{align*}
as required.

Next, we prove $\P(A_M^{(2,b)})=0$. Again, using Lemmas~\ref{upBound2Lem},~\ref{lowBound20Lem} and~\ref{lowBound2bLem} shows that
\begin{align*}
\mu_+\big (m+\xi-1+m^{-1}+42m\delta\big)\ge\mu_-(m-1+\sqrt{\eta^2+1})+\sqrt{2}(\xi-1)+|2-\xi-\eta|.
\end{align*}
After rearranging terms, we arrive at 
\begin{align*}
m({\mu_+}{\mu_-^{-1}}-1)+{\mu_+}{\mu_-^{-1}}\big(m^{-1}+42m\delta\big)\ge -1+\sqrt{\eta^2+1}+\tfrac{\sqrt{2}-\mu_+}{\mu_-}(\xi-1)+\tfrac{1}{\mu_-}|2-\xi-\eta|.
\end{align*}
First, the left-hand side becomes arbitrarily small if first $m$ is chosen sufficiently large and then $\delta,\delta'\in(0,1)$ are chosen sufficiently small. Next, it is shown in~\cite{nicolasDel} that $\mu\le \tfrac{35}{3\pi^2}<\sqrt{2}$ holds in the Delaunay case, so that the coefficient $(\sqrt{2}-\mu_+)/\mu_-$ is bounded away from $0$. Now, we can distinguish between the three cases i) $|\eta|\ge1/4$ ii) $\xi\ge5/4$ and iii) $|\eta|\le 1/4$ and $\xi\le5/4$ to see that the right-hand side remains bounded away from $0$.
\end{proof}

Finally, we provide the proofs of Lemmas~\ref{upBound2Lem}--\ref{lowBound2bLem}.

\begin{proof}[Proof of Lemma~\ref{upBound2Lem}]
Choose a subsquare center $v_i$ such that $\pi_2(v_i)=M+5.5\varepsilon M$ and $|\pi_1(v_i)+\xi M|\le\varepsilon M$.
Then, by Lemma~\ref{rmmConfLem2}, we obtain that $\ell^{(-M)}(P_1,P_1')\le\sqrt{M}$,
where $P_1'=q(v_i)$. Next, choose a subsquare center $v_j$ such that $\pi_2(v_j)=\pi_2(v_i)$ and 
$$|\pi_1(v_j)-(-1+\delta)M|\le\varepsilon M.$$
Then putting $P_3=q(v_j)$, condition {\bf (D1)} implies that $\ell^{(-M)}(P_1',P_3)\le\mu_+(\xi-1+\delta +2\varepsilon)M$. Finally, an application of {\bf (D2)} results in 
\begin{align*}
\ell^{(-M)}(P_3,Q)\le\mu_+(\sqrt{m^2+1}+18\delta )M\le\mu_+(m+m^{-1}+18\delta)M,
\end{align*}
as required.
\end{proof}

\begin{proof}[Proof of Lemma~\ref{lowBound20Lem}]
 First, note that $\eta\le1$ as $\gamma$ does not hit the set $[-(1-\delta)M,0]\times(\R\setminus[-M,M])$. 
 In particular,
\begin{align*}
\ell^{(nM)}(P_1,P_2)\ge(1-\eta)M,
\end{align*}
which proves the first claim.

Second, proceeding as in Lemma~\ref{lowBound10Lem}, condition {\bf (D2)} implies that 
$$\ell^{(nM)}(P_2,P)\ge\mu_-(\sqrt{\eta^2+1}-2\delta)M$$
and 
$$\ell^{(nM)}(P,Q)\ge\mu_-(m-1-18\delta m)M.$$
Hence, 
$$\ell^{(nM)}(P_2,Q)=\ell^{(nM)}(P_2,P)+\ell^{(nM)}(P,Q)\ge\mu_-(m-1+\sqrt{\eta^2+1}-20\delta m)M,$$
as required.
\end{proof}

\begin{proof}[Proof of Lemma~\ref{lowBound2bLem}]
First, Corollary~\ref{diagDelCor} and Remark~\ref{diagDelRem} show that the first edge in $\gamma[P_1]$ is one of the diagonal edges shown in Figure~\ref{deleteFig}. Hence by Corollary~\ref{diagDelCor}, writing $P_1'$ for intersection point of this edge with the vertical line $\ellv_{-M}$, we arrive at
\begin{align*}
\ell^{(nM)}(P_1,P_2)&\ge\ell^{(nM)}(P_1,P_1')+\ell^{(nM)}(P_1',P_2)\ge\sqrt{2}(\xi-1)M+|\pi_2(P_1')-\pi_2(P_2)|-\delta M.
\end{align*}
Since the right-hand side is at least $(\sqrt{2}(\xi-1)+|2-\xi-\eta|-2\delta)M$, we conclude the proof.
\end{proof}

\section{Proof of Theorem \ref{theo:sublin}}
\label{SecProofSublin}

The proof of Theorem \ref{theo:sublin} follows the method developed in~\cite{sublin}. In order to make the paper self-contained, we recall the main steps of the method and insist on the new parts which are proper to the context $G(X)=\Del(X)$ or $\Rng(X)$.

To begin with, we provide an overview for the proof of Theorem~\ref{theo:sublin}. Let $\hat{u}\in S^1$ be a given direction. By isotropy, it suffices to prove that the expectation of $\chi_{r}(\hat{u},2\pi)$ tends to $0$ as $r$ tends to infinity. The proof works as well when $2\pi$ is replaced with any $c>0$.

The spirit of the proof of Theorem \ref{theo:sublin} is the following. First, a uniform moment condition reduces the proof to the convergence in probability of $\chi_{r}(\hat{u},2\pi)$ to $0$. Thus, far away from the origin, the radial character of the geodesics of tree $\T_{x^{\star}}$ vanishes. In other words, when $r$ is large, with high probability, the shortest-path tree $\T_{x^{\star}}$ locally looks like to a directed forest around $r\hat{u}$. This directed forest is in fact made up of all the semi-infinite geodesics $\gamma_{x}$ with direction $-\hat{u}$ starting from all the Poisson points $x\in X$. Let us denote this forest by $\F_{-\hat{u}}$. Henceforth, to a semi-infinite geodesic in $\T_{x^{\star}}$ crossing the arc of $S_r(o)$ centered at $r\hat{u}$ and with length $2\pi$, it corresponds a bi-infinite geodesic in the directed forest $\F_{-\hat{u}}$. Now, Theorem~\ref{delThm} states that this event should not occur.\\

\textbf{STEP 1:} A classical use of Fubini's theorem-- see \cite[Lemma 6]{efpp}, \cite[Theorem 0]{lNewman} or Lemma \ref{uniGeoLem} above --ensures the existence with probability $1$ of exactly one semi-infinite geodesic, say $\gamma_{x}$, with direction $-\hat{u}$ starting from every Poisson point $x\in X$. The collection of these semi-infinite geodesics provides a directed forest $\F_{-\hat{u}}$. The goal of this first step is to approximate the shortest-path tree $\T_{x^{\star}}$ inside the disk $B_L(r\hat{u})$ by the directed forest $\F_{-\hat{u}}$: see (\ref{approxDF}) below.

Let us remark that the graphs $\T_{x^{\star}}$ and $\F_{-\hat{u}}$ can be built simultaneously on the same vertex set $X$. Except for the root $x^{\star}$ in $\T_{x^{\star}}$, all their vertices are with outdegree $1$. A measurable function $F$ is said \textit{local} if there exists a deterministic real number $L>0$ such that, for any $z\in\R^{2}$, the quantities $F(z,\T_{x^{\star}})$ and $F(z,\F_{-\hat{u}})$ are equal whenever each Poisson point $x\in X\cap B_L(z)$ admits the same outgoing vertex in $\T_{x^{\star}}$ and $\F_{-\hat{u}}$. Our local approximation result is written through the use of local functions: for any local function $F$,
\begin{equation}
\label{approxDF}
\lim_{r\to\infty} d_{\ms{TV}} \Big(F(r\hat{u},\T_{x^{\star}}),F(r\hat{u},\F_{-\hat{u}}) \Big) = 0 ~,
\end{equation}
where $d_{\ms{TV}}$ denotes the total variation distance.

Let $F$ be a local function with local parameter $L$. Then, by translation invariance of the directed forest $\F_{-\hat{u}}$,
\begin{eqnarray}
d_{\ms{TV}} \left( F(r\hat{u},\T_{x^{\star}}) , F(r\hat{u},\F_{-\hat{u}}) \right) & = & d_{\ms{TV}} \left( F(r\hat{u},\T_{x^{\star}}) , F(r\hat{u},\F_{-\hat{u}}) \right) \nonumber\\
& \leq & \P \left( F(r\hat{u},\T_{x^{\star}}) \not= F(r\hat{u},\F_{-\hat{u}}) \right) ~.
\end{eqnarray}
The event $F(r\hat{u},\T_{x^{\star}})\not= F(r\hat{u},\F_{-\hat{u}})$ implies the existence of a Poisson point $x$ in $B_L(r\hat{u})$ whose outgoing vertices in $\T_{x^{\star}}$ and $\F_{-\hat{u}}$ are different. So, by uniqueness, the geodesics $\gamma_{x^{\star},x}$ (in $\T_{x^{\star}}$) and $\gamma_{x}$ (in $\F_{-\hat{u}}$) have only $x$ in common. Hence, for $\eps\in(0;1)$,
$$
d_{\ms{TV}} \left( F(r\hat{u},\T_{x^{\star}}) , F(r\hat{u},\F_{-\hat{u}}) \right) \leq \P \left(\begin{array}{c}
\exists x \in X\cap B_L(r\hat{u}) \; \mbox{ such that }\\
\gamma_{x^{\star},x}\cap\gamma_{x} = \{x\} \; \mbox{ and } x^{\star}\in B_{r^\eps}(o)
\end{array}\right) \; + o(1) ~.
$$
Thus, because of translation invariance and the identity $\gamma_{x^{\star},x}=\gamma_{x,x^{\star}}$, we can bound the term $d_{\ms{TV}}( F(r\hat{u},\T_{x^{\star}}),F(r\hat{u},\F_{-\hat{u}}))$ by
\begin{equation}
\label{step1-1}
\P \left(\begin{array}{c}
\exists x \in X\cap B_L(o) , \, \exists x' \in X\cap B_{r^\eps}(-r\hat{u}) \\
\mbox{such that } \; \gamma_{x,x'}\cap\gamma_{x} = \{x\}
\end{array}\right) \; + o(1) ~.
\end{equation}

\begin{figure}[!ht]
\begin{center}
\psfrag{b1}{\small{$B_L(o)$}}
\psfrag{b}{\small{$B_{r^\eps}(-r\hat{u})$}}
\psfrag{g1}{\small{$\gamma_{x,x'}$}}
\psfrag{g2}{\small{$\gamma_{x}$}}
\psfrag{x1}{\small{$x$}}
\psfrag{x2}{\small{$x'$}}
\includegraphics[width=13cm,height=3.5cm]{}
\end{center}
\caption{\label{fig:SublinDel} This picture represents the event appearing in (\ref{step1-1}) with $\hat{u}=(1,0)$. Poisson points $x$ and $x'$ respectively belong to the disks $B_L(o)$ and $B_{r^\eps}(-r\hat{u})$. The shortest-path tree $\T_{x}$ contains a semi-infinite geodesic $\gamma_{x}$ with asymptotic direction $-\hat{u}$ and a geodesic $\gamma_{x,x'}$ whose endpoint is in $B_{r^\eps}(-r\hat{u})$. Furthermore, $\gamma_{x}$ and $\gamma_{x,x'}$ have only the vertex $x$ in common.}
\end{figure}

Besides, we can require that the geodesics $\gamma_{x}$ and $\gamma_{x,x'}$ belong to a thin cone with direction $-r\hat{u}$. For $\eta>0$ and $z\in\R^{2}$, let $C(z,\eta)$ be the cone with apex $o$, direction the vector $z$ and opening angle $\eta$: $C(z,\eta)=\{z'\in\R^{2}, \theta(z,z')\leq\eta\}$ where $\theta(z,z')$ is the absolute value of the angle (in $[0;\pi]$) between vectors $z$ and $z'$. On the one hand, the semi-infinite geodesic $\gamma_{x}$ has asymptotic direction $-\hat{u}$. So, with high probability, for any $\eta>0$ and $M$ large enough, its restriction to the outside of the disk $B_M(o)$ is included in the cone $C(-\hat{u},\eta)$. See \cite[Lemma 13]{sublin} for details. On the other hand, the same statement holds for the geodesic $\gamma_{x,x'}$ using the moderate deviations result \cite[Theorem 2]{modDev}. See \cite[Lemma 14]{sublin} for details. Consequently, for any $\eta>0$ and $M,r$ large enough, the term $d_{\ms{TV}}(F(r\hat{u},\T_{x^{\star}}),F(o,\F_{-\hat{u}}))$ is bounded by 
\begin{equation}
\label{step1-2}
\P \left(\begin{array}{c}
\exists x \in X\cap B_L(o) , \, \exists x' \in X\cap B_{r^\eps}(-r\hat{u})\\
\mbox{such that } \; \gamma_{x,x'}\cap\gamma_{x} = \{x\} \; \mbox{ and}\\
(\gamma_{x,x'}\cup\gamma_{x})\cap B_M(o)^{c} \subset C(-\hat{u},\eta)
\end{array}\right) \; + o(1) ~.
\end{equation}
On the event described in (\ref{step1-2}), the shortest-path tree $\T_{x}$ contains two disjoint (except the root $x$) geodesics with direction $-\hat{u}$ which are as long as we want. However, with probability $1$, $\T_{x}$ contains at most one semi-infinite geodesic with (deterministic) asymptotic direction $-\hat{u}$. So (\ref{step1-2}) is a $o(1)$ which leads to (\ref{approxDF}).\\

\textbf{STEP 2:} The goal of this second step is to state that the directed forest $\F_{-\hat{u}}$ does not contain any bi-infinite geodesic with probability $1$. This is a consequence of coalescence of semi-infinite geodesics (i.e. Theorem \ref{delThm}).

Let $\hat{v}\in S^{1}$ be orthogonal to $\hat{u}$. Let $\ell$ be the line spanned by the vector $\hat{v}$ and, for any $m>0$, let $\ell_{m}=\ell-m\hat{u}$. For $x<y$, we also denote by $\ell_m(x,y)$ the subset of $\ell_{m}$ defined by
$$
\ell_m(x,y) = \{-m\hat{u} + b\hat{v} \in \R^{2} ; x\leq b<y \} ~.
$$
Thus, we denote by $K[\ell_{0}(x,y)]$ the number of elements $P\in\ell_{0}(x,y)$ which are defined as the last intersection point between a bi-infinite geodesic of the directed forest $\F_{-\hat{u}}$ and the line $\ell_{0}$. In the same way, we denote by $K[\ell_{0}(x,y), \ell_m]$ the number of elements $P\in\ell_{m}$ which are defined as the last intersection point between a bi-infinite geodesic $\gamma$ of $\F_{-\hat{u}}$ and the line $\ell_{m}$, and whose the last intersection point between $\gamma$ and $\ell_{0}$ belongs to $\ell_{0}(x,y)$.

\begin{figure}[!ht]
\begin{center}
\psfrag{a}{\small{$-\hat{u}$}}
\psfrag{b}{\small{$\ell_m$}}
\psfrag{c}{\small{$\ell_0$}}
\includegraphics[width=12cm,height=7cm]{}
\end{center}
\caption{\label{fig:Step3} The thick segment on the line $\ell_{0}$ represents $\ell_{0}(x,y)$. The two black points on $\ell_{m}$ are elements counted by $K[\ell_{0}(x,y), \ell_m]$. On this picture, $K[\ell_{0}(x,y), \ell_m]$ equals $2$ whereas $K[\ell_{0}(x,y)]$ equals $3$: two bi-infinite geodesics counted by $K[\ell_{0}(x,y)]$ merge before the line $\ell_{m}$.}
\end{figure}

However only the inequality $K[\ell_{0}(0,L)]\geq K[\ell_{0}(0,L),\ell_m]$ holds a.s. (see Figure \ref{fig:Step3}), by stationarity of the directed forest $\F_{-\hat{u}}$, it is possible to prove the identity
\begin{equation}
\label{step2-1}
\E K[\ell_{0}(0,L)] = \E K[\ell_{0}(0,L),\ell_m]
\end{equation}
for any integers $L,m>0$. See \cite[Section 6]{sublin} for details. Now, thanks to the coalescence of the semi-infinite geodesics of $\F_{-\hat{u}}$, the non-increasing sequence $(K[\ell_{0}(0,L),\ell_m])_{m\geq0}$ a.s. converges to a limit smaller than $1$. By the Lebesgue's Dominated Convergence Theorem, we get for any integer $L>0$:
$$
L \E K[\ell_{0}(0,1)] = \E K[\ell_{0}(0,L)] = \lim_{m\to\infty} \E K[\ell_{0}(0,L),\ell_m] \leq 1 ~.
$$
This forces $\E K[\ell_{0}(0,1)]$ to be null. So, a.s. there is no bi-infinite geodesic crossing the vertical segment $\ell_{0}(0,1)$. We conclude by stationarity.\\

\textbf{STEP 3:} This third step consists in combining the results of the two previous ones in order to establish the convergence in probability of $\chi_{r}(\hat{u},2\pi)$ to $0$.

Let $a(r)$ be the arc of the circle $S_r(o)$ centered at $r\hat{u}$ and with length $2\pi$. When $r$ is large, $a(r)$ becomes close (for the Hausdorff distance) to the segment $I(r)$ centered at $r\hat{u}$ and with length $2\pi$ which is orthogonal to $\hat{u}$. With high probability, the event $\chi_{r}(\hat{u},2\pi)\geq 1$ implies the existence of a geodesic in the shortest-path tree $\T_{x^{\star}}$ crossing $I(r)$ and exiting the disk $B_R(r\hat{u})$, for any $R$-- when one walks away from the root $x^{\star}$. Precisely, for any $R>0$, $\P(\chi_{r}(\hat{u},2\pi)\geq 1)$ is smaller than
\begin{equation}
\label{step3-1}
\P \left(\begin{array}{c}
\exists x_{1},x_{2},x_{3} \in X \; \mbox{ such that } \\
\gamma_{x_{1},x_{2}}\cap I(r) \not= \emptyset \, , \; \gamma_{x_{1},x_{2}} \subset B_R(r\hat{u}) \, ,\\
\gamma_{x_{1},x_{3}}=\gamma_{x_{1},x_{2}} \cup \{(x_{2},x_{3})\} \, , \; x_{3} \in B_{2R}(r\hat{u}) \setminus B_R(r\hat{u})
\end{array}\right) + o(1) ~,
\end{equation}
as $r$ tends to infinity. This latter inequality relies on the fact that, with high probability when $R$ tends to infinity, the graph $G(X)$ contains no edge whose endpoints respectively belong to $B_R(r\hat{u})$ and the outside of $B_{2R}(r\hat{u})$. In both cases $G(X)=\Rng(X)$ or $G(X)=\Del(X)$, this would imply the existence of a random disk avoiding the Poisson point process $X$, overlapping $B_R(r\hat{u})$ and with diameter larger than $R$. Let $\kappa=\lfloor6\pi\rfloor+1$. There exists a deterministic sequence $u_{1},\ldots,u_{\kappa}$ of points of the circle $S_{3R/2}(r\hat{u})$ such that $|u_{i}-u_{i+1}|\leq R/2$ for $i=1,\ldots,\kappa$ (where the index $i$ is taken modulo $\kappa$). Then, at least one of the deterministic disks $B_{R/2}(u_{i})$ would avoid the Poisson point process $X$. Such an event should not occur with high probability as $R$ tend to infinity (uniformly in $r$).

Since the geodesic $\gamma_{x_{1},x_{3}}$ is included in $B_{2R}(r\hat{u})$ then the event written in \eqref{step3-1} can be described using a local function with local parameter $2R$. So, we can apply the result of Step 1. Let $I(0)$ be the segment centered at the origin $o$ and with length $2\pi$ which is orthogonal to $\hat{u}$. For any $R>0$, the probability $\P(\chi_{r}(\hat{u},2\pi)\geq 1)$ is bounded by
\begin{equation}
\label{step3-2}
\P \left(\begin{array}{c}
\exists x \in X \; \mbox{ with } \; x \notin B_R(o)\\
\mbox{whose semi-infinite geodesic $\gamma_{x}$ in $\F_{-\hat{u}}$}\\
\mbox{crosses the segment $I(0)$}.
\end{array}\right) + o(1) ~,
\end{equation}
as $r$ tends to infinity. Now, thanks to Step 2, there is no bi-infinite geodesic in the directed forest $\F_{-\hat{u}}$ with probability $1$. So we can choose the radius $R$ large enough so that the probability in (\ref{step3-2}) is as small as we want.\\

\textbf{STEP 4:} It then remains to state the following uniform moment condition to strengthen the convergence of $\chi_{r}(\hat{u},2\pi)$ to $0$ in the $L^1$ sense:
\begin{equation}
\label{UnifMomCond}
\exists r_{0} > 0 , \; \sup_{r\geq r_{0}} \E \chi_{r}(\hat{u},2\pi)^{2} \, < \, \infty ~.
\end{equation}
See \cite[Section 3.1]{sublin} for details. Let $\psi_{r}$ be the number of edges of the shortest-path tree $\T_{x^{\star}}$ crossing the arc $a(r)$. Of course, $\chi_{r}(\hat{u},2\pi)$ is smaller than $\psi_{r}$ and it suffices-- to get (\ref{UnifMomCond}) --to prove that $\P(\psi_{r}>n)$ decreases fast enough and uniformly on $r$.

First, the number of Poisson points inside the disk $B_R(r\hat{u})$ is controlled by \cite[Lemma 11.1.1]{Talagrand}: for any $n,r,R$,
\begin{equation}
\label{InegTalagrand}
\P ( \#(X \cap B_R(r\hat{u})) > n ) \leq e^{-n \ln \left( \frac{n}{e\pi R^{2}} \right)} ~.
\end{equation}
[Note that (\ref{InegTalagrand}) can also be obtained from Stirling's formula when $R$ and $n$ are large.] From now on, we assume that there are no more than $n$ Poisson points inside $B_R(r\hat{u})$. Since, the tree $\T_{x^{\star}}$ admits more than $n$ edges crossing the arc $a(r)$, i.e. $\psi_{r}>n$, then necessarily one of these edges has (at least) one of its endpoints outside $B_R(r\hat{u})$. In both cases $G(X)=\Rng(X)$ or $G(X)=\Del(X)$, this implies the existence of a random disk avoiding the Poisson point process $X$, overlapping the arc $a(r)$ and with diameter larger than $R-2\pi$. It is not difficult to prove that the probability of such an event decreases exponentially fast with $R$ and uniformly on $r$ and $\hat{u}$. That is to say there exist positive constants $c,c'$ such that for $R$ large enough,
\begin{equation}
\label{Inegstep4}
\P ( \#(X \cap B_R(r\hat{u})) \leq n \, , \; \psi_{r} > n ) \leq c e^{-c'R} ~.
\end{equation}
The searched result follows from (\ref{InegTalagrand}) and (\ref{Inegstep4}) by taking for example $R=n^{1/4}$.

{\small \providecommand{\noopsort}[1]{}
}

\end{document}